\newtheorem{theorem}{Theorem}[section]
\newtheorem{lemma}[theorem]{Lemma}
\newtheorem{prop}[theorem]{Proposition}
\newtheorem{corollary}[theorem]{Corollary}
\newcommand{\abs}[1]{\lvert#1\rvert}
\newcommand{\abss}[1]{\|#1\|}
\newcommand{\comment}[1]{}
\DeclareMathOperator{\supp}{supp}
\DeclareRobustCommand\widecheck[1]{{\mathpalette\@widecheck{#1}}}
\def\@widecheck#1#2{%
	\setbox\z@\hbox{\m@th$#1#2$}%
	\setbox\tw@\hbox{\m@th$#1%
		\widehat{%
			\vrule\@width\z@\@height\ht\z@
			\vrule\@height\z@\@width\wd\z@}$}%
	\dp\tw@-\ht\z@
	\@tempdima\ht\z@ \advance\@tempdima2\ht\tw@ \divide\@tempdima\thr@@
	\setbox\tw@\hbox{%
		\raise\@tempdima\hbox{\scalebox{1}[-1]{\lower\@tempdima\box
				\tw@}}}%
	{\ooalign{\box\tw@ \cr \box\z@}}}
\newcommand\numberthis{\addtocounter{equation}{1}\tag{\theequation}}
\journal{Journal of Differential Equations}
\begin{document}

\begin{frontmatter}



    \title{Small Data Well-Posedness for Derivative Nonlinear Schr{\"o}dinger Equations\tnoteref{label1}}
    \tnotetext[label1]{\copyright 2018. This manuscript version is made available under the CC-BY-NC-ND 4.0 license \url{http://creativecommons.org/licenses/by-nc-nd/4.0/}}

\author{Donlapark Pornnopparath}

\address{Department of Mathematics, University of California, San Diego, La Jolla, CA 92093-0112, USA}

\ead{donlapark@ucsd.edu}
\begin{abstract}
We study the local and global solutions of the generalized derivative nonlinear Schr{\"o}dinger equation $i\partial_t u+\Delta u = P(u,\overline{u},\partial_x u,\partial_x \overline{u})$, where each monomial in $P$ is of degree $3$ or higher, in low-regularity Sobolev spaces without using a gauge transformation. Instead, we use a solution decomposition technique introduced in \cite{BeIoKeTa} during the perturbative argument to deal with the loss on derivative in nonlinearity. It turns out that when each term in $P$ contains only one derivative, the equation is locally well-posed in $H^{\frac{1}{2}}$, otherwise we have a local well-posedness in $H^{\frac{3}{2}}$. If each monomial in $P$ is of degree $5$ or higher, the solution can be extended globally. By restricting to equations to the form $i\partial_t u+\Delta u = \partial_x P(u,\overline{u})$ with the quintic nonlinearity, we were able to obtain the global well-posedness in the critical Sobolev space. 
\end{abstract}

\begin{keyword}
Derivative nonlinear Schr{\"o}dinger equations \sep Local well-posedness \sep Global well-posedness

\MSC[2000] 35Q55 \sep  35A01 \sep 35B45
\end{keyword}

\end{frontmatter}


\section{Introduction}\label{sec1}
\noindent
In this paper, we study the well-posedness of the Cauchy problem for the generalized derivative nonlinear Schr{\"o}dinger equation (gDNLS) on $\mathbb{R}$.
\begin{equation}\label{dnls2}
\begin{cases}
i\partial_t u+\Delta u = P(u,\overline{u},\partial_x u,\partial_x\overline{ u}) \\
u(x,0)=u_0\in H^s(\mathbb{R}), s\geq s_0.
\end{cases}
\end{equation}
Here, $u$ is a complex-valued function and $P:\mathbb{C}^4 \to \mathbb{C}$ is a polynomial of the form 
\begin{equation}\label{poly}
P(z)=P(z_1,z_2,z_3,z_4)= \sum_{d\leq \abs{\alpha}\leq l }C_{\alpha}z^{\alpha},
\end{equation} 
and $l\geq d\geq 3$. There are several results regarding the well-posedness of this equation. In \cite{KenigPonceVega}, Kenig, Ponce and Vega proved that the equation \eqref{dnls2} is locally well-posed for a small initial data in $H^{\frac{7}{2}}(\mathbb{R})$. There has been some interest in the special case where $P=i\lambda\abs{u}^ku_x$: 
\begin{equation*}
\begin{cases}
    i\partial_t u+\Delta u = i\lambda\abs{u}^ku_x \\
    u(x,0) =u_0\in H^s(\mathbb{R}), s\geq s_0.
\end{cases}
\end{equation*}
with $k\in\mathbb{R}$. Hao (\cite{Hao}) proved that this equation is locally well-posed in $H^{\frac{1}{2}}(\mathbb{R})$  for $k\geq 5$, and Ambrose-Simpson (\cite{AmSim}) proved the result in $H^1(\mathbb{R})$ for $k\geq 2$. Recent studies show that these results can be improved. See Santos (\cite{Santos}) for the local-wellposedness in $H^{\frac{1}{2}}$ when $k\geq 2$ and Hayashi-Ozawa (\cite{HaOz02}) for the local well-posedness in $H^2$ when $k\geq 1$ and the global well-posedness in $H^1$ when $k\geq 2$. \\
\\
Several studies showed that we have better results if $P$ only consists of $\overline{u}$ and $\partial_x\overline{u}$ due to the following heuristic: if $u$ solves the linear Schr{\"o}dinger equation, then the space-time Fourier transform of $\overline{u}$ is supported away from the parabola $\{(\xi,\tau)|\tau+\xi^2=0\}$, leading to strong dispersive estimates. Gr{\"u}nrock (\cite{Grun}) showed that for $P=\partial_x(\overline{u}^d)$ or $P=(\partial_x\overline{u})^d$ where $d\geq 3$, the equation \eqref{dnls2} is locally well-posed for any $s>\frac{1}{2}-\frac{1}{d-1}$ in the former case and $s>\frac{3}{2}-\frac{1}{d-1}$ in the latter. Later, Hirayama (\cite{Hirayama}) extended Gr{\"u}nrock's results for $P=\partial_x(\overline{u}^d)$ to the global well-posedness for $s\geq \frac{1}{2}-\frac{1}{d-1}$.\\
\\
There are also various results for higher dimension analogues of \eqref{dnls2}
\begin{equation}\label{dnls3}
\begin{cases}
i\partial_t u+\Delta u = P(u,\overline{u},\nabla  u,\nabla\overline{ u}) \\
u(x,0)=u_0(x), \ \ x\in \mathbb{R}^n.
\end{cases}
\end{equation}
The most general results in $\mathbb{R}^n$ for $n\geq 2$ is due to Kenig, Ponce and Vega in \cite{KenigPonceVega}. For a more specific case, we refer to \cite{be1} and \cite{be2} where Bejenaru obtained a local well-posedness result for $n=2$ and $P(z)$ is quadratic with low regularity initial data. For results in Besov spaces, see \cite{wang} for the global well-posedness in $\dot{B}^{s_n}_{1,2}(\mathbb{R}^n)$ where $n\geq 2$ and $s_n=\frac{n}{2}-\frac{1}{d-1}$ which is the critical exponent.\\
\\
\noindent
For another type of derivative nonlinearities, we refer to Chihara (\cite{chi}) for nonlinearities of the form $f(u,\partial u)$, where $f:\mathbb{R}^2\times \mathbb{R}^{2n}\to \mathbb{R}$ (identifying $\mathbb{C}$ with $\mathbb{R}^2$) is a smooth function such that $f(u,v)=O(\abs{u}^2+\abs{v}^2)$ or $f(u,v)=O(\abs{u}^3+\abs{v}^3)$ near $(u,v)=0$. It turns out that the corresponding Cauchy problems are locally well-posed in $H^{\lfloor n/2\rfloor +4}$ for any $n\geq 1$.\\
\\
Our first result is the local well-posedness of \eqref{dnls2} in Sobolev spaces when the nonlinearity contains an arbitrary number of derivatives. 
\begin{theorem}\label{thmm1}
	In the equation \eqref{dnls2}, let $s$ be any number such that
	\begin{enumerate}[label=(\Alph*)]
		\item $s\geq \frac{1}{2}$ if each term in $P(u,\overline{u},\partial_x u,\partial_x \overline{u}) $ has only one derivative,
		\item $s\geq \frac{3}{2}$ if a term in $P(u,\overline{u},\partial_x u,\partial_x \overline{u}) $ has more than one derivative.
	\end{enumerate}
    Then there exist a Banach space $X^s$ and a constant $C=C(s,d)$ with the following properties: For any $u_0\in H^s(\mathbb{R})$ such that $\abss{u_0}_{H^s}< C$, the equation \eqref{dnls2} has a unique solution:
    \[u\in X:=\{u\in C_t^0H_x^s([-1,1]\times \mathbb{R}) \cap X^s : \abss{u}_{X^s}\leq 2C\}.\] 
    Furthermore, the map $u_0 \mapsto u$ is Lipschitz continuous from $B_C := \{u_0\in H^s : \abss{u_0}_{H^s}\leq C\}$ to $X$. 
\end{theorem}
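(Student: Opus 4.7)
The plan is to run a Picard iteration on the Duhamel formulation
\[
u(t) = e^{it\Delta}u_0 - i \int_0^t e^{i(t-t')\Delta} P(u,\overline{u},\partial_x u,\partial_x \overline{u})(t')\,dt'
\]
in a Banach space $X^s$ tailored to absorb the derivative appearing in the nonlinearity. The norm on $X^s$ will combine the energy component $\|u\|_{L^\infty_t H^s_x}$ with a Kenig--Ponce--Vega local smoothing component of the form $\|\partial_x u\|_{L^\infty_x L^2_t}$ (for which the linear flow gains a full dyadic derivative), standard Strichartz norms, and a maximal function norm of type $\|u\|_{L^4_t L^\infty_x}$. An auxiliary source space $N^s$ will be chosen so that both the homogeneous bound $\|e^{it\Delta}u_0\|_{X^s}\lesssim \|u_0\|_{H^s}$ and the inhomogeneous bound $\|\int_0^t e^{i(t-t')\Delta}f(t')\,dt'\|_{X^s}\lesssim \|f\|_{N^s}$ hold.

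The heart of the argument is the multilinear estimate
\[
\|P(u,\overline{u},\partial_x u,\partial_x\overline{u})\|_{N^s} \lesssim \sum_{k=d}^{l} \|u\|_{X^s}^{k}.
\]
For a monomial carrying exactly one derivative (case A), H\"older in space--time places the differentiated factor in $L^\infty_x L^2_t$ (the smoothing norm, where the derivative is for free) and the remaining factors in $L^4_t L^\infty_x$; the threshold $s\geq 1/2$ is precisely what is needed to control those factors from $H^s$ via the maximal function estimate. For monomials with two or more derivatives (case B), at most one derivative can be absorbed by the smoothing norm, so any surplus must fall on a factor in $L^\infty_t H^{s-1}_x$; the hypothesis $s\geq 3/2$ guarantees $s-1\geq 1/2$, keeping that factor inside the regime where the remaining estimates close.

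The main obstacle is the familiar derivative loss: a direct multilinear estimate in $X^s$ alone cannot recover the missing derivative at the endpoint regularity. Following \cite{BeIoKeTa}, I plan to resolve this through a solution decomposition. Write $u = e^{it\Delta}u_0 + v$, where $v$ is the Duhamel integral, and treat the two pieces asymmetrically: the linear part enjoys sharp dispersive bounds directly from $\|u_0\|_{H^s}$, so any monomial built solely from copies of $e^{it\Delta}u_0$ (and its conjugate/derivative) is controlled by free-solution tools; mixed monomials and pure-$v$ monomials are controlled through $\|v\|_{X^s}$, bootstrapped by the small-data hypothesis $\|u_0\|_{H^s}<C$. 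This splitting bypasses the point where a single-norm iteration would fail.

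Once the multilinear estimate is in hand, the Picard map contracts the ball $\{u:\|u\|_{X^s}\leq 2C\}$ for $C$ sufficiently small, giving existence and uniqueness in that ball; continuity in time with values in $H^s$ follows from the energy component of the $X^s$ norm. Finally, Lipschitz continuity of $u_0\mapsto u$ is obtained by applying the same multilinear estimates to the telescoping difference $P(u_1,\overline{u_1},\partial_x u_1,\partial_x\overline{u_1})-P(u_2,\overline{u_2},\partial_x u_2,\partial_x\overline{u_2})$, which is a sum of multilinear expressions in $u_1-u_2$ with coefficients built from $u_1$ and $u_2$.
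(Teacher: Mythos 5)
Your overall framework is aligned with the paper's: Duhamel reformulation, a contraction in a dyadic Littlewood--Paley space $X^s$ whose per-frequency component $X_N$ combines the energy norm, the Kenig--Ponce--Vega local smoothing norm $N^{1/2}\|\cdot\|_{L^\infty_x L^2_t}$, the maximal-function norm $N^{-1/2}\|\cdot\|_{L^2_x L^\infty_t}$, and Strichartz norms, together with a source space $Y^s$ of $N^{-1/2}L^1_x L^2_t$ type. However, there are two substantive gaps.

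First, your description of the \cite{BeIoKeTa} decomposition is not what the paper (or the original reference) does, and the naive split you substitute for it does not accomplish the same task. You propose to write $u = e^{it\Delta}u_0 + v$ with $v$ the Duhamel integral and treat the two pieces asymmetrically. The actual decomposition in \eqref{dec2} operates at a lower level: one decomposes the \emph{retarded} Duhamel contribution from a spatial point source, $w_y(x,t) = \int_0^t P_N[K_0(x-y,t-s)]F(y,s)\,ds$, into a genuine free solution $e^{it\Delta}\mathcal{L}v_0$, a term $(P_{<N/2^{50}}1_{x>0})e^{it\Delta}v_0$ that is a low-frequency function times a free solution, and a remainder $h$ with full modulation-$N^2$ localization. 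This is precisely what lets one transfer the smoothing and maximal-function estimates from the free propagator to the truncated Duhamel map with $L^1_xL^2_t$ source (Lemma \ref{lem1}); the difficulty is the sharp time cutoff $\int_0^t$, not the linear--nonlinear split of the iterate. Your $u = e^{it\Delta}u_0+v$ split leaves open exactly the inhomogeneous estimate $\|\int_0^t e^{i(t-s)\Delta}P_NF\,ds\|_{X_N} \lesssim N^{-1/2}\|P_N F\|_{L^1_xL^2_t}$ that the decomposition is designed to prove.

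Second, and more critically, the multilinear estimate you sketch does not close in the high--low interaction regime. Take $d=3$ with input frequencies $N\sim N_1\gg N_2\geq N_3$. Placing $\partial_x P_{N_1}u_1$ in $L^\infty_xL^2_t$ costs $N_1^{1/2}\|P_{N_1}u_1\|_{X_{N_1}} = N_1^{1/2-s}c_{N_1,1}$; the target weight $N^{s-1/2}$ with $N\sim N_1$ then exactly cancels this, leaving $\prod_{i\geq 2} c_{N_i,i}$ with \emph{no} decaying factor in $N_2,N_3$. H\"older forces the low-frequency factors into $L^2_xL^\infty_t$ or $L^\infty_{x,t}$, both of which give $\|P_{N_i}u_i\|\lesssim N_i^{1/2}\|P_{N_i}u_i\|_{X_{N_i}}=c_{N_i,i}$ --- again no gain. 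The resulting sum $\sum_{N_2,N_3}c_{N_2,2}c_{N_3,3}$ is an $\ell^1\times\ell^1$ sum that is not controlled by the $\ell^2$-based norms $\|u_2\|_{X^{1/2}}\|u_3\|_{X^{1/2}}$. The paper closes this case by proving a bilinear Strichartz estimate adapted to the $X_N$ space, $\|P_{N_1}u\,P_{N_2}v\|_{L^2_{x,t}}\lesssim N_1^{-1/2}\|u\|_{X_{N_1}}\|v\|_{X_{N_2}}$ (Theorem \ref{bithm}), whose proof also relies on the \cite{BeIoKeTa} decomposition and which supplies the needed $N_2^{-1/2}$ gain to run the Cauchy--Schwarz argument \eqref{tool}. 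Your proposal contains no bilinear ingredient and no substitute for it, so the high--low block of the multilinear estimate, as written, cannot be summed.
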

\noindent
\emph{Remark:} The definition of $X^s$ will be made precise in \Cref{sec4} below. \\
\\
This shows that, without any restriction to the number of derivatives, we are able to improve Kenig et al.'s result (\cite{KenigPonceVega}) from $H^{\frac{7}{2}}$ to $H^{\frac{3}{2}}$. By restricting to only one derivative per term in the nonlinearity, we can improve further to $H^{\frac{1}{2}}$. Moreover, part $(A)$ of \Cref{thmm1} extends Hao and Santos's local well-posedness result in $H^{\frac{1}{2}}$ to more general class of nonlinearities. It turns out that the global well-posedness results can be achieved if the nonlinearity is quintic or higher and the endpoint cases are excluded.
\begin{theorem}\label{gwp1}
	Suppose that $d\geq 5$ in \eqref{poly}. Let $s$ be any number such that
	\begin{enumerate}[label=(\Alph*)]
		\item $s> \frac{1}{2}$ if each term in $P(u,\overline{u},\partial_x u,\partial_x \overline{u}) $ has only one derivative,
		\item $s> \frac{3}{2}$ if a term in $P(u,\overline{u},\partial_x u,\partial_x \overline{u}) $ has more than one derivative.
	\end{enumerate}
	Then the equation \eqref{dnls2} is globally well-posed in the following sense: \\
	\\
	There exist a Banach space $X^s$ and a constant $C=C(s,d)$ with the following properties: For any $u_0\in H^s(\mathbb{R})$ such that $\abss{u_0}_{H^s}< C$ and any time interval $I$ containing $0$, the equation \eqref{dnls2} has a unique solution:
    \[u\in X:=\{u\in C_t^0H_x^s(I\times \mathbb{R}) \cap X^s : \abss{u}_{X^s}\leq 2C\}.\] 
    Furthermore, the map $u_0 \mapsto u$ is Lipschitz continuous from $B_C := \{u_0\in H^s : \abss{u_0}_{H^s}\leq C\}$ to $X$. 
\end{theorem}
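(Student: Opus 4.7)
The plan is to rerun the fixed-point argument of \Cref{thmm1} on an arbitrary time interval $I\ni 0$ in place of $[-1,1]$, exploiting the higher-degree structure $d\ge 5$ to close the estimates uniformly in $|I|$. Define the restriction space $X^s(I)$ in the usual way, $\|u\|_{X^s(I)}=\inf\{\|v\|_{X^s}:v|_I=u\}$, so that the linear homogeneous estimate $\|e^{it\Delta}u_0\|_{X^s(I)}\lesssim\|u_0\|_{H^s}$ is automatic and independent of $|I|$. The technical heart is to verify a multilinear nonlinear estimate
\[\|P(u,\overline u,\partial_x u,\partial_x\overline u)\|_{N^s(I)} \le C_{\ast} \|u\|_{X^s(I)}^d\]
with $C_{\ast}$ independent of $|I|$, where $N^s$ is the dual-type space used in \Cref{thmm1}. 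Any $|I|$-dependent factor arising in the local argument (from a time cutoff or from H\"older in time) must be absorbed into a slack factor $\|u\|_{X^s(I)}^{d-3}$, whose smallness is exactly what the hypothesis $d\ge 5$ provides since $\|u\|_{X^s(I)}^{d-3}\le(2C)^{d-3}$ can be made arbitrarily small.

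The strict inequalities $s>\tfrac{1}{2}$ and $s>\tfrac{3}{2}$ in cases (A) and (B) supply the $\epsilon$-gain needed to upgrade the endpoint cubic estimates of \Cref{thmm1} to genuinely subcritical ones: in case (A) this eliminates the logarithmic loss that plagues the $L^\infty_t H^{1/2}_x$ trade-offs at the endpoint, while in case (B) the extra derivative room lets Sobolev embedding absorb the two derivatives per monomial with a strictly positive margin. Once the $|I|$-uniform multilinear bound is in hand, a standard contraction on the ball $\{u\in X^s(I):\|u\|_{X^s(I)}\le 2C\}$ applied to the Duhamel operator
\[\Phi(u)(t)=e^{it\Delta}u_0-i\int_0^t e^{i(t-\tau)\Delta}P(u,\overline u,\partial_x u,\partial_x\overline u)(\tau)\,d\tau\]
closes whenever $\|u_0\|_{H^s}<C$ with $C=C(s,d)$ small enough that $C_{\ast}(2C)^{d-1}\le \tfrac{1}{2}$; this is solvable precisely because $d\ge 5$ makes $(2C)^{d-1}$ shrink strictly faster than linearly in $C$, and the same computation yields Lipschitz dependence on $u_0$.

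Finally, to pass from arbitrary compact $I$ to the whole line, uniqueness forces solutions on $I_1\subset I_2$ to agree on $I_1$, and since the bound $\|u\|_{X^s(I)}\le 2C$ is $I$-independent, letting $I\uparrow\mathbb{R}$ produces a global solution on any interval. The main obstacle I expect lies in case (B): one has to verify that the multilinear estimate for monomials carrying two derivatives is really $|I|$-independent rather than secretly relying on the finite time window used in \Cref{thmm1}. Resolving this requires a careful re-examination of the proof of \Cref{thmm1} to isolate and eliminate any such time cutoff, which is exactly what the strict inequality $s>\tfrac{3}{2}$ enables by leaving an $\epsilon$ of Sobolev room above the total derivative count.
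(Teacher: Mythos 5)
The mechanism you propose does not work, and it is not the one the paper uses. You suggest running the contraction on a restriction space $X^s(I)$, accepting an $|I|$-dependent multilinear constant, and then claiming that this dependence can be "absorbed into a slack factor $\|u\|_{X^s(I)}^{d-3}\le(2C)^{d-3}$." But $(2C)^{d-3}$ is a fixed constant chosen once and for all; it cannot compensate a factor such as $|I|^\varepsilon$ that grows without bound as $I\uparrow\mathbb{R}$. If the multilinear estimate genuinely carried an $|I|$-dependent prefactor, smallness of the data could never rescue the argument for large $I$. So either you prove that the multilinear constant is in fact $|I|$-independent — in which case the "slack factor" plays no role — or the contraction fails. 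Your proposal identifies the problem (the time cutoff in the proof of \Cref{thmm1}) but offers no mechanism for removing it.

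The actual role of $d\ge 5$ in the paper is entirely different, and has nothing to do with extra smallness factors. The paper works from the start in function spaces defined on $\mathbb{R}\times\mathbb{R}$ (see \eqref{norm4}), with $X_N$ built from $L^\infty_t L^2_x$, $N^{-1/4}\|\cdot\|_{L^4_xL^\infty_t}$, $N^{1/2}\|\cdot\|_{L^\infty_xL^2_t}$, and $N^{-1/2}\|\cdot\|_{L^1_xL^2_t}$. The key input is \Cref{thm2}: the maximal-function estimate \eqref{li11} for $L^\gamma_x L^\infty_t$ is \emph{global in time} precisely when $\gamma\ge4$, whereas for $\gamma=2,3$ only the time-localized version \eqref{li10} holds. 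With $d\ge5$ factors, after peeling off the derivative factor (placed in $L^\infty_x L^2_t$ via local smoothing) one still has at least four factors left, enough to place several in $L^4_x L^\infty_t$ and the remainder in $L^\infty_{x,t}$ via Bernstein, all by H\"older \emph{in space}. No H\"older in time, no cutoff, no $|I|$-dependence ever enters the estimate \eqref{multi}. This is why the result is genuinely global: the Banach space and all constants are $I$-independent by construction, and the contraction argument on any $I$ is identical to the one on $\mathbb{R}$. The strict inequalities $s>\tfrac12$, $s>\tfrac32$ are needed to run the frequency summations in the proof of \eqref{multi} (the embeddings of $X^s$ into various $\dot X^{s'}$ spaces, the Cauchy--Schwarz/Young bookkeeping), not — as you suggest — to cure an endpoint logarithmic loss.
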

\noindent
\emph{Remark:} The definition of $X^s$ will be made precise in \Cref{proofgwp} below. \\
\\
Notice that when each term in $P(u,\overline{u},\partial_x u,\partial_x \overline{u}) $ has only one derivative, \eqref{dnls2} is invariance under the scaling $u(x,t)\mapsto u_{\lambda}(x,t):= \lambda^{\frac{1}{d-1}}u(\lambda x,\lambda^2 t)$. Thus, the critical space is $H^{s_0}$ where $s_0=\frac{1}{2}-\frac{1}{d-1}$ in the sense that $\abss{u}_{H^{s_0}}=\abss{u_{\lambda}}_{H^{s_0}}$. If we follow the heuristic that a dispersive equation is expected to be locally well-posed in any subcritical Sobolev space $H^s$ i.e. $s>s_0$, then the result in part $(A)$ of \Cref{gwp1}, which requires $s>\frac{1}{2}$, is not optimal in this sense. It turns out that the global well-posedness at critical Sobolev spaces can be achieved if we assume a specific type of the gDNLS equation
\begin{equation}\label{dnls1}
\begin{cases}
i\partial_t u+\Delta u = \partial_x P(u,\overline{u}) \\
u(x,0)=u_0\in H^s(\mathbb{R}), s\geq s_0.
\end{cases}
\end{equation}
where $P:\mathbb{C}^2 \to \mathbb{C}$ is a polynomial of the form
\begin{equation}\label{1234}
P(z)=P(z_1,z_2)= \sum_{d\leq \abs{\alpha}\leq l }C_{\alpha}z^{\alpha},
\end{equation}
and $l\geq d\geq 5$. \\
\\
\noindent
The following theorem shows that for $d\geq 5$ we have the global well-posedness at the scaling critical Sobolev space. 
\begin{theorem}\label{thmm}
    Suppose that $d\geq 5$ in \eqref{1234}. Let $s_0=\frac{1}{2}-\frac{1}{d-1}$. For any $s\geq s_0$, the equation \eqref{dnls1} is globally well-posed in $H^s(\mathbb{R})$ in the following sense: \\
	\\
	There exist a Banach space $X^s$ and a constant $C=C(s,d)$ with the following properties: For any $u_0\in H^s(\mathbb{R})$ such that $\abss{u_0}_{H^s}< C$ and any time interval $I$ containing $0$, the equation \eqref{dnls1} has a unique solution:
    \[u\in X:=\{u\in C_t^0H_x^s(I\times \mathbb{R}) \cap X^s : \abss{u}_{X^s}\leq 2C\}.\] 
    Furthermore, the map $u_0 \mapsto u$ is Lipschitz continuous from $B_C := \{u_0\in H^s : \abss{u_0}_{H^s}\leq C\}$ to $X$. \\
	\\
	In the case of $s=s_0$, the statement above holds true if we replace $H^s$ by $\dot{H}^{s_0}$.
\end{theorem}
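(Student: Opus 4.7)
The plan is to reformulate \eqref{dnls1} via Duhamel's principle as the fixed-point equation
\[ u(t) = e^{it\Delta} u_0 - i \int_0^t e^{i(t-\tau)\Delta} \partial_x P(u(\tau), \overline{u}(\tau))\, d\tau, \]
and to close the contraction in a scaling-invariant resolution space $X^{s_0}$ built from $U^2/V^2$-adapted function spaces (augmented by appropriate Strichartz norms) suited to the Schr\"odinger group. The crucial properties required of $X^s$ are the linear bound $\|e^{it\Delta} u_0\|_{X^s} \lesssim \|u_0\|_{H^s}$, the energy inequality
\[ \left\| \int_0^t e^{i(t-\tau)\Delta} F(\tau)\,d\tau \right\|_{X^s} \lesssim \|F\|_{N^s} \]
into a dual resolution space $N^s$, and the $d$-linear estimate
\[ \|\partial_x P(u, \overline{u})\|_{N^{s_0}} \lesssim \sum_{k=d}^{l} \|u\|_{X^{s_0}}^k. \]
Unlike in Theorem~\ref{gwp1}, all norms here are taken globally over $t \in \mathbb{R}$, so it is essential to prove these bounds without any time cutoff.

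The main task is the critical $d$-linear estimate. I would start with a Littlewood--Paley decomposition of each factor at dyadic frequencies $N_1 \geq N_2 \geq \cdots \geq N_d$, together with a matching decomposition of the output frequency. The value $s_0 = \frac{1}{2} - \frac{1}{d-1}$ is chosen precisely so that, after distributing the $N^{s_0}$ weights on the $d$ factors and absorbing $\partial_x$ against the highest one, a combination of linear and bilinear Strichartz estimates closes the bound on each fixed dyadic configuration. In the subcritical regime $s > s_0$ of Theorem~\ref{gwp1} one enjoys the free gain $\prod_{j\geq 2}(N_j/N_1)^{s-s_0}$, making the dyadic summation absolutely convergent; at the endpoint $s = s_0$ this gain disappears and one must instead exploit the almost-orthogonality of disjoint frequency blocks encoded in the $U^2/V^2$ square function, obtaining $\ell^2$ rather than $\ell^1$ summability in the dyadic parameter.

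The hardest case is the high-high-$\cdots$-high interaction at $s = s_0$, where even $\ell^2$ summation just barely fails. Here I would invoke the solution decomposition technique of \cite{BeIoKeTa} already highlighted by the author in the abstract: one splits each highest-frequency factor into a free Schr\"odinger piece (concentrated on the paraboloid, for which sharp transversal bilinear Strichartz estimates are available) and a purely nonlinear remainder (forced to large modulation, which directly pays for the outer derivative through the $N^{s_0}$ weight). This maneuver supplies exactly the missing endpoint gain with no logarithmic defect. Once the multilinear estimate is in hand, a single contraction on the global-in-time ball of radius $2C$ in $X^{s_0}$ produces a solution for every $\|u_0\|_{H^{s_0}} < C$; persistence of regularity then upgrades it to $H^s$ for every $s > s_0$, and the homogeneous $\dot{H}^{s_0}$ version at the endpoint is obtained by replacing $\langle\xi\rangle^{s_0}$ with $|\xi|^{s_0}$ throughout and repeating the argument, the multilinear estimate being scale-invariant. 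The principal obstacle I anticipate is precisely this endpoint high-high-$\cdots$-high summation, which requires careful orthogonality accounting combined with the nonlinear/linear decomposition described above.
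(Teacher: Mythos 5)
Your high-level outline — Duhamel reformulation, contraction in a scaling-invariant space, linear bound plus energy inequality plus a $d$-linear estimate at critical regularity — agrees with the paper's strategy, but the functional framework and, more importantly, the mechanism for the hardest case of the multilinear estimate are different from what the paper actually does, and I think there are genuine gaps.

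First, the paper does not use $U^2/V^2$ spaces. The resolution space $X_N$ is built from $L_t^\infty L_x^2$, the maximal function norm $N^{-1/4}\|\cdot\|_{L_x^4L_t^\infty}$, the \emph{local smoothing} norm $N^{1/2}\|\cdot\|_{L_x^\infty L_t^2}$, and a dual norm $N^{-1/2}\|(i\partial_t+\Delta)u\|_{L_x^1L_t^2}$; for $d=5$ the dual side is further enlarged by the Bourgain space $\dot{X}^{0,-1/2,1}$ and one uses the $\dot{X}^{0,1/2,\infty}$ bound \eqref{xsb}. The derivative loss is paid for by the local smoothing estimate \eqref{stri2} together with the maximal function estimate \eqref{li1}, neither of which you mention. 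It is not clear that a $U^2/V^2$ framework with only Strichartz norms would recover the full derivative gain needed for $\partial_x$ in the nonlinearity; this is precisely what the $L_x^\infty L_t^2$ component is for.

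Second, you assign the BeIoKeTa decomposition the role of resolving the endpoint high-high-$\cdots$-high summation. In the paper that decomposition \eqref{dec2} is used elsewhere: it is the key to proving the inhomogeneous linear estimate $\|\int_0^t e^{i(t-s)\Delta}F\,ds\|_{X_N}\lesssim\|F\|_{Y_N}$ (Lemma~\ref{lem1}) and the bilinear estimates adapted to $X_N$ (Theorem~\ref{bithm} and Proposition~\ref{propp2}). Also, the truly all-comparable case $N_1\sim\cdots\sim N_5$ is handled easily (case I of Theorem~\ref{thmnon3}) by H\"older and Cauchy--Schwarz; it is not the bottleneck.

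Third — and this is the main gap — you do not anticipate the actual hard configuration for $d=5$: $N\lesssim N_1\sim N_2\sim N_3\sim N_4\gg N_5$ in case IV of Theorem~\ref{thmnon3}, where the position of complex conjugates matters. The paper handles this with a modulation-frequency analysis motivated by Tao's quartic gKdV paper: one either extracts large modulation from some factor (putting it in $L^2_{t,x}$ via \eqref{xsb}), or, if every factor is close to the paraboloid, one decomposes the high-frequency factors via Riesz projections $P_\pm$, applies bilinear Strichartz to transversal pairs, and observes that the remaining resonant configurations force the output modulation to be $\gtrsim N_1^2$ — at which point one places the product in $\dot{X}^{0,-1/2,1}$ rather than in $Z_N$. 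Your proposal contains no surrogate for this modulation accounting. Without it, the $\ell^2$ dyadic summation at $s=s_0$ is not the only obstruction; the Lebesgue exponents simply do not close for the bad conjugate pattern $u_1=u_2=u_3=u$. Finally, the paper does not upgrade from $s_0$ to $s>s_0$ by persistence of regularity: it proves the multilinear bound simultaneously in $\dot{X}^0$ and $\dot{X}^s$ (see \eqref{goal} and \eqref{goal1}) and runs the contraction directly in $X^s = \dot{X}^0\cap\dot{X}^s$.
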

\noindent
\emph{Remark:} The definition of $X^s$ will be made precise in \Cref{sec6} in the case of $d\geq 6$ and \Cref{sec7} in the case of $d=5$ below. \\
\\
This extends Gr{\"u}nrock and Hirayama's results to more general class of nonlinearities. The main ideas behind the proof of \Cref{thmm1} and \Cref{thmm} consist of the Duhamel reformulation of the problem, followed by the contraction argument, using the local smoothing estimate \eqref{stri2} and the maximal function estimate \eqref{li1} to deal with the loss of derivative in nonlinearity.  We also use a decomposition \eqref{dec2} of the nonlinear Duhamel term, first introduced in \cite{BeIoKeTa}, to deal with the truncated time integration. We then finish with the usual perturbative analysis to obtain the well-posedness results. The proof for \Cref{thmm} in the case $d=5$ is rather delicate and needs some modulation-frequency argument, motivated by Tao's paper on the quartic generalised KdV equation (\cite{MR2286393}), which is sensitive to the conjugates in the nonlinearity. Therefore, the proof of global well-posedness in this case will be treated separately in section \ref{sec7}.\\
\\
\noindent
One motivation of this paper came from the following specific case of \eqref{dnls1}, which has been intensively studied in the past:
\begin{equation}\label{dnls}
\begin{cases}
i\partial_t u+\Delta u = i\partial_x(\abs{u}^2u) \\
u(x,0)=u_0\in H^s(\mathbb{R}), s\geq \frac{1}{2}.
\end{cases}
\end{equation}
We name this equation \emph{DNLS}. It arises from studies of small-amplitude Alf{\'v}en waves propagating parallel to a magnetic field \cite{C} and large-amplitude magnetohydrodynamic waves in plasmas \cite{Ruderman1}. There is also recent discovery of rogue waves as solutions for the Darboux transformation of the DNLS (See ~\cite{XuHeWang}). Although one expects the local well-posedness for $s\geq 0$, Biagioni and Linares (\cite{BiLi01}) have showed that (\ref{dnls}) is ill-posed for $s<\frac{1}{2}$ in the sense that the solution mapping $u_0\mapsto u$ fails to be uniformly continuous. This means that our result from \Cref{thmm} when $d=3$, which is a local well-posedness in $H^{\frac{1}{2}}$, is sharp in this sense.  \\
\\
We mention here a few of many results regarding this equation. The global well-posedness in the energy space $H^1(\mathbb{R})$ was proved by Hayashi and Ozawa in \cite{HaOz1}. For data below the energy space, Takaoka has shown in \cite{Takaoka99} that DNLS is locally well-posed for $s\geq \frac{1}{2}$ using \eqref{gt} with $k=-1$. In \cite{Iteam02}, Colliander, Keel, Staffilani, Takaoka and Tao used the ``I-method" to show the global well-posedness of DNLS for $s>\frac{1}{2}$, assuming the smallness condition $\abs{u_0}_{L^2}<\sqrt{2\pi}$. Later, Miao, Wu and Xu have proved the global well-posedness result for the endpoint case $s=\frac{1}{2}$ using the third generation I-method and same smallness condition in \cite{Miao10}. Lastly, Wu (\cite{Wu13} and \cite{Wu14}) has shown that in the energy-critical case $s=1$, the smallness threshold is improved to $\abss{u_0}_L^2< 2\sqrt{\pi}$. \\
We are now shifting focus toward some qualitative aspects of the solutions. Kaup and Newell has shown that the equation in completely integrable, which implies infinitely many conservation laws.
Moreover, the inverse scattering method can be applied to obtain soliton solutions which are unstable in a sense that a small perturbation could cause the soliton to disperse (See \cite{KaNe01}). Recently, Liu, Perry and Sulem used this method to prove the global well-posedness result in $H^{2,2}(\mathbb{R})$ (see \cite{LiuSimpSulem1}). A study following Wu's above result (\cite{CherSimpSul1}) shows an existence of two kinds of solitons: bright solitons with mass $\sqrt{2\pi}$, and lump soliton with mass $2\sqrt{\pi}$. He showed in \cite{Wu13} that there is no blow-up near the $\sqrt{2\pi}$ threshold. On the other hand, the study of Cher, Simpson and Sulem (\cite{CherSimpSul1}) has shown some numerical evidence of a blow-up profile that closely resembles the lump soliton.  \\
\\
The main difficulty in studying DNLS is the spatial derivative in nonlinearity. Due to this, all of well-posedness results for DNLS so far involve the \emph{Gauge transformation}:
\begin{equation}\label{gt} v(x,t):=u(x,t)\exp\left\{ik\int_{-\infty}^{x}\abs{u(y,t)}^2 \ dy,\right\}\end{equation}
where $k\in\mathbb{R}$. In \cite{Takaoka99}, Takaoka used the transformation with $k=-1$ to turn (\ref{dnls}) into
\begin{equation}
\begin{cases}
i\partial_t v+\Delta v = -iv^2\partial_x\overline{v}-\frac{1}{2}\abs{v}^4v \\
v(x,0)=v_0\in H^s(\mathbb{R}), s\geq \frac{1}{2}.
\end{cases}
\end{equation}
Note that the transformation replaces the term $\abs{u}^2\partial_x u$ with $v^2\partial_v \overline{u}$ which can be treated using the Fourier restriction norm method developed in \cite{Bour01}. In contrast to this type of proofs, we managed to get the local well-posedness of \eqref{dnls} (as a part of \Cref{thmm}) without using a gauge transformation. The advantage is that the idea can be easily generalized to get similar result for equation \eqref{dnls1} \\
\\
The paper is organized as follows. In the next subsection, we introduce some notations that are used in this paper. In section \ref{sec2}, we mention several linear and smoothing estimates and prove the maximal function estimate and bilinear estimate. In section \ref{sec3}, we introduce the solution space $X_N$ and nonlinear space $Y_N$ for functions supported at frequency $N$ and prove the main linear and bilinear estimate for functions in these spaces using a solution decomposition technique from \cite{BeIoKeTa}. In section \ref{sec4}, we prove a multilinear estimate. Having all the ingredients that we need, we finish the proof of \Cref{thmm1} in the same section. For \Cref{thmm}, we divide the proof into different sections by the degree $d$ of $P(u,\overline{u})$. In section \ref{sec6}, we prove \Cref{thmm} in the case of $d\geq 6$ . Since the case $d=5$ requires some frequency-modulation analysis, we will introduce the notion of $X^{s,b}$ space along with several well-known estimates in section \ref{sec7}, and use these results to conclude the proof of \Cref{thmm} in the same section. Finally, we prove another multilinear estimate and use it to finish the proof of \Cref{gwp1} in \Cref{proofgwp}.\\

\noindent \textbf{Notations.} The following notations will be used for the rest of the paper. For $1\leq p,q \leq \infty$, we use $\abss{f}_{L^p}$ to denote the $L^p$ norm, and we define the mixed norm
\[\abss{f}_{L_x^pL_t^q}:= \big\|{\abss{f(x,t)}_{L_t^q(I)}}\big\|_{L_x^p(\mathbb{R})},\]
where $I=[-1,1]$ if $d=3,4$ and $I=\mathbb{R}$ if $d\geq 5$. The norm $\abss{f}_{L_t^pL_x^q}$ is defined similarly. We define the Fourier transform and the inverse Fourier transform of $f(x)$ by
\begin{equation*}
\begin{split}
\hat{f}(\xi)&:=\frac{1}{\sqrt{2\pi}}\int_{\mathbb{R}} e^{-ix\xi}f(x) \ dx, \\
\check{f}(x)&:= \frac{1}{\sqrt{2\pi}}\int_{\mathbb{R}} e^{ix\xi}f(\xi) \ d\xi.
\end{split}
\end{equation*}
To simplify the proofs, we will always drop the constant $\frac{1}{\sqrt{2\pi}}$ from these transforms. For $s\in \mathbb{R}$, we denote by $D^s=(-\Delta)^{s/2}$ the Riesz potential of order $-s$. The Sobolev space $H_x^s$ is defined by the norm \[\abss{u}_{H_x^s}:=\abss{(1+\xi^2)^{\frac{s}{2}}\widehat{u}(\xi)}_{L^2_{\xi}}.\]
The Banach space of bounded $H_x^s$-valued continuous functions is denoted by
\[ C_t^0H^s_x(I\times J) := \left\{ f\in C(I; H^s_x(J)): \sup_{t\in I} \abss{f(x, t)}_{H^s_x(J)}<\infty \right\}.   \]
Let $u\in L^2_x$. We define the Schr{\"o}dinger propagator by
\[e^{it\Delta}u(x,t) := \int_{\mathbb{R}}e^{ix\xi-it\xi^2}\hat{u} \ d\xi. \]
The notation $a\lesssim b$ and $a\sim b$ means $a\leq Cb$ and $ca \leq b \leq CA$, respectively, for some positive constants $c$ and $C$, which depend on $P(z)$ but not on the functions involved in these estimates. \\
\noindent
We frequently split the frequency space into dyadic intervals, so whenever $M$ and $N$ is mentioned, we assume that $M,N\in 2^{\mathbb{Z}}$. Let $\psi(\xi)$ be a smooth cutoff function supported in $ \abs{\xi} \leq 4$ and equal $1$ on $\abs{\xi} \leq 2$. We define $\psi_N=\psi\left(\frac{\xi}{N}\right)-\psi\left(\frac{2\xi}{N}\right)$. Denote by $P_N$ the Littlewood-Paley projection at frequency $N$, that is
\[\widehat{P_Nf}(\xi)=\psi_N(\xi)\hat{f}(\xi)\]
Define $P_{\leq N}$ and $P_{> N}$ to be the projections of frequency less than and greater than $N$:
\begin{equation*}
\begin{split}
\widehat{P_{\leq N}f}(\xi)&= \psi_{\leq N}\hat{f}(\xi):=\sum_{M\leq N}\psi_M(\xi)\hat{f}(\xi), \\
\widehat{P_{> N}f}(\xi)&=\psi_{> N}\hat{f}(\xi):=\sum_{M> N}\psi_M(\xi)\hat{f}(\xi).
\end{split}
\end{equation*}
We will sometimes shorten the notation by $f_N:=P_Nf$. For $s\geq 0$, we can define the space $H^s$  and the homogeneous Sobolev space $\dot{H}^s$ using the Littlewood-Paley projections
\begin{equation*}
\begin{split}
\abss{u}_{\dot{H}^s}&:= \Big(\sum_{N_i\in 2^{\mathbb{Z}}}N_i^{2s}\abss{P_{N_i}u}^2_{L^2}\Big)^{\frac{1}{2}} \\
\abss{u}_{H^s} &:= \abss{P_{\leq 1}u}_{L^2}+\Big(\sum_{N_i\in 2^{\mathbb{N}}}N_i^{2s}\abss{P_{N_i}u}^2_{L^2}\Big)^{\frac{1}{2}}.
\end{split}
\end{equation*}

\section{Preliminary Results}\label{sec2}
\subsection{Bernstein type inequality}
\noindent
We begin with the Bernstein inequality for the Littlewood-Paley projections. Note that this is different from the standard result in literatures which is the same estimate but for the space $L_t^qL_x^p$.
\begin{lemma}
	For any pair of $1\leq p,q \leq\infty$, we have
	\begin{equation}\label{p1}
	\abss{\partial_x P_Nf}_{L_x^pL_t^q} \lesssim N\abss{P_Nf}_{L_x^pL_t^q}
	\end{equation}
\end{lemma}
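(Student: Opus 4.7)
The plan is to reduce the mixed-norm Bernstein inequality to a one-dimensional convolution estimate via Minkowski's inequality, exploiting the fact that $\partial_x$ and $P_N$ act only in the spatial variable.

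First, I would introduce a fattened multiplier. Let $\widetilde{\psi}$ be a smooth function, supported in $\{|\xi|\leq 8\}$ and equal to $1$ on $\{1/4 \le |\xi| \le 4\}$, so that $\widetilde{\psi}(\xi/N)\psi_N(\xi) = \psi_N(\xi)$. Writing $\widetilde{P}_N$ for the associated Fourier multiplier, one has $\widetilde{P}_N P_N = P_N$, and hence
\begin{equation*}
\partial_x P_N f \;=\; \partial_x \widetilde{P}_N\bigl(P_N f\bigr) \;=\; K_N *_x (P_N f),
\end{equation*}
where the convolution is in the $x$ variable only and $K_N$ is the $x$-kernel with Fourier symbol $i\xi\,\widetilde{\psi}(\xi/N)$.

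Next I would compute the $L^1_x$ norm of $K_N$ by rescaling. Setting $\xi = N\eta$ gives
\begin{equation*}
K_N(x) \;=\; \int e^{ix\xi}\, i\xi\, \widetilde{\psi}(\xi/N)\, d\xi \;=\; N^2 \,\Phi(Nx), \qquad \Phi(x) := \int e^{ix\eta}\, i\eta\, \widetilde{\psi}(\eta)\, d\eta,
\end{equation*}
so that $\|K_N\|_{L^1_x} = N\|\Phi\|_{L^1_x}$, and $\Phi$ is a fixed Schwartz function independent of $N$, hence $\|K_N\|_{L^1_x} \lesssim N$.

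Finally, to pass from the kernel bound to the mixed-norm estimate, I would apply Minkowski's inequality in the $t$-variable inside the $x$-convolution: for each fixed $x$,
\begin{equation*}
\bigl\|(K_N *_x P_N f)(x,\cdot)\bigr\|_{L_t^q} \;\le\; \int_{\mathbb{R}} |K_N(x-y)|\, \|P_N f(y,\cdot)\|_{L_t^q}\, dy,
\end{equation*}
and then use the scalar Young inequality $\|K_N * h\|_{L^p_x} \le \|K_N\|_{L^1_x}\|h\|_{L^p_x}$ with $h(y) := \|P_N f(y,\cdot)\|_{L_t^q}$. Combining gives $\|\partial_x P_N f\|_{L_x^p L_t^q} \le \|K_N\|_{L^1_x}\|P_N f\|_{L_x^p L_t^q} \lesssim N\|P_N f\|_{L_x^p L_t^q}$, as desired.

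There is no real obstacle here; the only subtlety is the order of norms. The standard Bernstein estimate usually puts the Lebesgue norm in the convolution variable on the outside, but in our mixed norm $L_x^p L_t^q$ the convolution variable $x$ is on the \emph{outside}, which is precisely the situation in which Minkowski followed by scalar Young applies cleanly.
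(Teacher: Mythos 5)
Your proof is correct and takes essentially the same approach as the paper: fatten the multiplier so that $\widetilde{P}_N P_N = P_N$, write $\partial_x P_N f$ as a spatial convolution with a kernel whose $L^1_x$ norm is $\lesssim N$, then apply Minkowski in $t$ followed by Young in $x$. The only cosmetic differences are your choice of a single smooth bump for the fattened multiplier (versus the paper's $P_{N/2}+P_N+P_{2N}$) and your explicit rescaling computation for $\|K_N\|_{L^1_x}$, which the paper leaves implicit.
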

\begin{proof}
	Let $\tilde{P}_N:=P_{N/2}+P_N+P_{2N}$ be a Littlewood-Paley projection at a wider frequency interval with corresponding multiplier $\widetilde{\psi}_N$. We can rewrite the term on the left-hand side as
	\[  \partial_x \tilde{P}_NP_Nf =(\partial_x \widecheck{\widetilde{\psi}}_N)* P_Nf(x,t). \]
	For each $x$, we have an inequality
	\[  \abss{\partial_x P_Nf}_{L_t^q} \leq \abs{\partial_x \widecheck{\widetilde{\psi}}_N}* \abss{P_Nf(x,t)}_{L_t^q} . \]
	After taking the $L_x^p$ norm and apply Young's inequality, we have
	\[\abss{\partial_x P_Nf}_{L_x^pL_t^q} \leq \abss{\partial_x \widecheck{\widetilde{\psi}}_N}_{L_x^1} \abss{P_Nf}_{L_x^pL_t^q} \lesssim N\abss{P_Nf}_{L_x^pL_t^q}. \]
\end{proof}
\noindent This lemma helps us quantify derivatives of a function supported in a dyadic frequency interval, which will come in handy in the proofs of multilinear estimates in section \ref{sec4} - \ref{sec7}. 
\subsection{Stationary phase lemmas} 
\noindent
We mention here stationary phase results from harmonic analysis, which will be used in the next subsection. See \cite[p.331-334]{D} for their proofs.
\begin{lemma}\label{sta1}
	Suppose that $\phi$ and $\psi$ are smooth functions and $\psi$ is compactly supported in $(a,b)$. If $\phi'(\xi)\not= 0$ for all $\xi\in [a,b]$, then
	\[\left|\int_{a}^{b}e^{i\lambda\phi(\xi)}\psi(\xi) \ d\xi\right| \leq \frac{C}{\abs{\lambda}^k}\]
	for all $k\geq 0$, where the constant $C$ depends on $\phi,\psi$ and $k$.
\end{lemma}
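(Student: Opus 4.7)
The plan is to prove this by the standard non-stationary phase trick: repeated integration by parts using an operator that reproduces the oscillatory exponential. Since $\phi'$ is continuous and non-vanishing on the compact set $[a,b]$, it is bounded away from zero, so $1/\phi'$ is a smooth function on $[a,b]$. This is what makes the iteration go through without picking up singularities.

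Concretely, I would define the first-order differential operator
\[ L f(\xi) := \frac{1}{i\lambda\,\phi'(\xi)}\,\frac{d f}{d\xi}(\xi), \]
which satisfies $L(e^{i\lambda\phi(\xi)}) = e^{i\lambda\phi(\xi)}$. Its formal transpose is
\[ L^{t} g(\xi) = -\frac{d}{d\xi}\!\left(\frac{g(\xi)}{i\lambda\,\phi'(\xi)}\right). \]
Then I would substitute $e^{i\lambda\phi} = L^{k}(e^{i\lambda\phi})$ into the integral and integrate by parts $k$ times. Because $\psi$ is compactly supported inside the open interval $(a,b)$, $\psi$ and all its derivatives vanish at the endpoints, so every boundary term produced by the integration by parts is zero. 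One arrives at
\[ \int_{a}^{b} e^{i\lambda\phi(\xi)}\psi(\xi)\,d\xi = \int_{a}^{b} e^{i\lambda\phi(\xi)}\,(L^{t})^{k}\psi(\xi)\,d\xi. \]

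To finish, I would observe that each application of $L^{t}$ produces a factor of $\lambda^{-1}$ together with differentiations of $\psi$ and of $1/\phi'$ via the Leibniz rule. A short induction (or just direct expansion) shows that $(L^{t})^{k}\psi$ is a sum of terms of the form $\lambda^{-k}$ times a product of derivatives of $\psi$ (up to order $k$) and derivatives of $1/\phi'$ (up to order $k-1$), all of which are continuous and hence bounded on $\supp\psi \subset [a,b]$. Taking absolute values inside the integral, the oscillatory factor disappears and we obtain
\[ \left|\int_{a}^{b} e^{i\lambda\phi(\xi)}\psi(\xi)\,d\xi\right| \;\leq\; \frac{(b-a)\,\sup_{[a,b]}|(L^{t})^{k}\psi|\cdot|\lambda|^{k}}{|\lambda|^{k}} \;\leq\; \frac{C}{|\lambda|^{k}}, \]
where $C$ depends only on $\phi$, $\psi$ and $k$ (through $b-a$, upper bounds of finitely many derivatives of $\psi$, and upper bounds of finitely many derivatives of $1/\phi'$ on $[a,b]$).

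There is no real obstacle here — this is a textbook non-stationary phase argument. The only slightly delicate point is the bookkeeping in the Leibniz expansion to verify that no derivative of order higher than $k$ of $\psi$, nor any power of $1/\phi'$ larger than what is manifestly continuous, appears; this is a routine induction on $k$. The result for $k=0$ is trivial (just $(b-a)\sup|\psi|$), so implicitly the estimate should be read as $C/(1+|\lambda|)^{k}$ or restricted to $|\lambda|\geq 1$, but this is merely a matter of stating the bound.
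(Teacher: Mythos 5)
Your proof is correct and is exactly the standard non-stationary phase argument; the paper does not give its own proof of this lemma but simply cites a textbook reference (\cite[p.\ 331--334]{D}), which presents the same repeated-integration-by-parts argument via the operator $L$ and its transpose. One small remark: the stated bound $C/|\lambda|^{k}$ actually does hold as written for all $\lambda\neq 0$ once $C$ is chosen from the integration-by-parts estimate --- for small $|\lambda|$ it is simply weaker than the trivial bound $(b-a)\sup|\psi|$, not false --- so no reformulation as $C/(1+|\lambda|)^{k}$ is needed.
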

\begin{lemma}\label{sta2}
	Suppose that $\psi: \mathbb{R} \to \mathbb{R}$ is smooth, $\phi$ is a real-valued $C^2$-function in $(a,b)$ and $\phi''(\xi)\gtrsim 1$. Then,
	\[\left|\int_{a}^{b}e^{i\lambda\phi(\xi)}\psi(\xi) \ d\xi\right| \lesssim \frac{1}{\abs{\lambda}^{\frac{1}{2}}}\left(\abs{\psi(b)}+\int_{a}^{b}\abs{\psi'(\xi)}\ d\xi\right).\]
\end{lemma}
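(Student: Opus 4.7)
The plan is to prove this via a van der Corput style argument: isolate the subinterval where $\phi'$ is small and use integration by parts on the remainder. Since $\phi'' \gtrsim 1$ on $(a,b)$, the function $\phi'$ is strictly increasing, so $\abs{\phi'}$ attains its minimum over $[a,b]$ at a unique point $\xi_0$, which is either an interior zero of $\phi'$ or one of the endpoints. Integrating $\phi'' \gtrsim 1$ then yields the pointwise lower bound $\abs{\phi'(\xi)} \gtrsim \abs{\xi - \xi_0}$ on $[a,b]$. Also, writing $\psi(\xi) = \psi(b) - \int_\xi^b \psi'(s)\,ds$ gives
\[
\abss{\psi}_{L^\infty([a,b])} \leq \abs{\psi(b)} + \int_a^b \abs{\psi'(s)}\,ds =: M,
\]
which is precisely the quantity that must appear on the right-hand side.

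Set $\delta := \abs{\lambda}^{-1/2}$ and split $I := \int_a^b e^{i\lambda\phi(\xi)}\psi(\xi)\,d\xi = I_1 + I_2$, where $I_1$ is the integral over $J_1 := [a,b] \cap [\xi_0 - \delta, \xi_0 + \delta]$ and $I_2$ is the integral over $J_2 := [a,b] \setminus J_1$. The first piece is handled trivially by the length bound: $\abs{I_1} \leq \abs{J_1}\,M \leq 2\delta M = 2\abs{\lambda}^{-1/2} M$, which already has the desired form.

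For $I_2$, I would integrate by parts using $e^{i\lambda\phi} = (i\lambda\phi')^{-1}\partial_\xi e^{i\lambda\phi}$, yielding
\[
I_2 = \left[\frac{\psi(\xi)\, e^{i\lambda\phi(\xi)}}{i\lambda\phi'(\xi)}\right]_{\partial J_2} - \int_{J_2} \frac{\psi'(\xi)\, e^{i\lambda\phi(\xi)}}{i\lambda\phi'(\xi)}\,d\xi + \int_{J_2} \frac{\psi(\xi)\phi''(\xi)\, e^{i\lambda\phi(\xi)}}{i\lambda\phi'(\xi)^2}\,d\xi.
\]
On $J_2$ we have $\abs{\phi'(\xi)} \gtrsim \abs{\xi - \xi_0} \geq \delta$, so $1/(\abs{\lambda}\abs{\phi'}) \lesssim \abs{\lambda}^{-1/2}$ uniformly. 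This immediately controls the boundary term by $\abs{\lambda}^{-1/2} M$ and the $\psi'$-integral by $\abs{\lambda}^{-1/2}\int_a^b \abs{\psi'}\,d\xi$. For the $\psi\phi''$-integral, combining the boundedness of $\phi''$ on $[a,b]$ with $\int_{J_2}(\xi-\xi_0)^{-2}\,d\xi \lesssim 1/\delta = \abs{\lambda}^{1/2}$ gives a contribution of order $\abs{\lambda}^{-1/2} M$ as well, completing the estimate.

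The main delicate point is the $\psi\phi''$-integral: to extract the $\abs{\lambda}^{-1/2}$ gain one needs an upper bound on $\phi''$, whereas the hypothesis only supplies a lower bound. Since $\phi \in C^2([a,b])$ on a compact set, such an upper bound exists, at the price of letting the implicit constant in $\lesssim$ depend on $\phi$, which is consistent with how the lemma is applied later. A secondary subtlety is handling the boundary pieces of $J_2$ at $\xi = a$ or $\xi = b$: here $\xi_0 - \delta$ or $\xi_0 + \delta$ may lie outside $[a,b]$, but in that case $\abs{\phi'}$ is even larger at the genuine endpoint than at the notional one, so the same $\abs{\lambda}^{-1/2}$ bound applies.
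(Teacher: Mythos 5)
The paper does not give a proof of this lemma; it is cited to a textbook reference, so there is no in-paper argument to compare against. Your proof is the classical van der Corput estimate with amplitude carried out in a single pass: isolate the critical point of $\phi'$, use the trivial length bound on the $\delta$-neighborhood $J_1$, and integrate by parts on the complement $J_2$. The structure is sound, and the lower bound $\abs{\phi'(\xi)}\gtrsim\abs{\xi-\xi_0}$, the $L^\infty$ control $\abss{\psi}_{L^\infty}\leq M$, the bound on $I_1$, the boundary terms, and the $\psi'$-integral in $I_2$ are all handled correctly.

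The one genuine issue is the $\psi\phi''$-integral, and you flag it yourself: your bound uses $\abss{\phi''}_{L^\infty}<\infty$, which is not among the hypotheses ($\phi$ is only $C^2$ on the open interval $(a,b)$, so $\phi''$ can blow up at the endpoints), and even when it is finite the implied constant then depends on $\phi$, whereas the lemma as stated carries constants independent of $\phi$. The standard way to close this without any extra assumption is to observe that $\phi''/(\phi')^2=-\bigl(1/\phi'\bigr)'$, and since $\phi'$ is monotone and nonvanishing on each component of $J_2$, the quantity
\[
\int_{J_2}\frac{\abs{\phi''(\xi)}}{\phi'(\xi)^2}\,d\xi=\int_{J_2}\Bigl|\Bigl(\frac{1}{\phi'}\Bigr)'(\xi)\Bigr|\,d\xi
\]
is the total variation of $1/\phi'$ over $J_2$, hence controlled by its endpoint values, hence $\lesssim 1/\delta=\abs{\lambda}^{1/2}$. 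Substituting this in place of your $\abss{\phi''}_{L^\infty}\int_{J_2}(\xi-\xi_0)^{-2}\,d\xi$ step gives the same $\abs{\lambda}^{-1/2}M$ bound with the constant depending only on the implicit constant in $\phi''\gtrsim 1$, matching the lemma exactly. (The textbook route is usually two-step: first establish the amplitude-free bound $\bigl|\int_a^b e^{i\lambda\phi}\bigr|\lesssim\abs{\lambda}^{-1/2}$ using this same monotonicity observation, then insert $\psi$ via $\psi(\xi)=\psi(b)-\int_\xi^b\psi'(s)\,ds$ and Fubini. Your one-pass version is equivalent once the fix above is made.)
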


\subsection{Strichartz and local smoothing estimates} 
\noindent
In our study, the nonlinear effect of the equation \eqref{dnls2} with small initial data $u_0$ plays a major role in the perturbative analysis. As we mentioned in section 1, the main difficulty is a lost of derivative in the nonlinearity. In this regard, we will need the Strichartz estimate for the Schr{\"o}dinger propagator and the smoothing estimate \eqref{stri2} which gives a $\frac{1}{2}$-order derivative gain of the linear solution in a suitable norm. We will also prove a maximal function type estimate \eqref{li1} which will be used for the analysis of the nonlinear term. 
\begin{prop}
	Let $f\in L^2$. Then, we have the following estimates
	\begin{align}
	\label{stri1}\abss{e^{it\Delta}f}_{L_t^qL^p_x} &\lesssim \abss{f}_{L_x^2},
	\intertext{where $\dfrac{2}{q}+\dfrac{1}{p}=\dfrac{1}{2}$ and $2\leq p \leq \infty$, and}
	\label{stri2}\abss{D^{\frac{1}{2}}e^{it\Delta}f}_{L_x^{\infty}L^2_t} &\lesssim \abss{f}_{L_x^2}. 
	\end{align}
\end{prop}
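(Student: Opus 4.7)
The plan is to prove \eqref{stri1} by a standard $TT^*$ argument built on the one-dimensional dispersive estimate, and to prove \eqref{stri2} by a direct Plancherel-in-$t$ computation after the substitution $\eta=\xi^2$. Both are classical results for the free 1D Schr\"odinger group; I sketch each in turn.

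For \eqref{stri1}, first I would record the explicit kernel representation $e^{it\Delta}f(x)=(4\pi it)^{-1/2}\int_{\mathbb{R}} e^{i\abs{x-y}^2/(4t)}f(y)\,dy$, from which the dispersive bound $\abss{e^{it\Delta}f}_{L^\infty_x}\lesssim \abs{t}^{-1/2}\abss{f}_{L^1_x}$ is immediate (this can also be extracted from \Cref{sta2} applied to the phase $\phi(\xi)=\xi^2$). Interpolating with mass conservation $\abss{e^{it\Delta}f}_{L^2_x}=\abss{f}_{L^2_x}$ yields
\[\abss{e^{it\Delta}f}_{L^p_x}\lesssim \abs{t}^{-(1/2-1/p)}\abss{f}_{L^{p'}_x},\qquad 2\le p\le \infty.\]
Setting $Tf(x,t):=e^{it\Delta}f(x)$, the operator $TT^*$ acts by convolution in $t$ with a kernel whose norm from $L^{p'}_x$ to $L^p_x$ is $O(\abs{t-s}^{-(1/2-1/p)})$; the Hardy-Littlewood-Sobolev inequality then maps $L^{q'}_tL^{p'}_x\to L^q_tL^p_x$ precisely under the scaling relation $\frac{2}{q}+\frac{1}{p}=\frac{1}{2}$ with $q\in(2,\infty)$, and the trivial endpoint $(q,p)=(\infty,2)$ is just mass conservation.

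For \eqref{stri2}, I would write
\[D^{1/2}e^{it\Delta}f(x)=\int_{\mathbb{R}}e^{ix\xi-it\xi^2}\abs{\xi}^{1/2}\hat{f}(\xi)\,d\xi,\]
split the integral into $\xi>0$ and $\xi<0$, and substitute $\eta=\xi^2$. The prefactor $\abs{\xi}^{1/2}$ together with the Jacobian $d\xi=d\eta/(2\sqrt{\eta})$ produces a factor $\eta^{-1/4}$, so the result is, up to a constant, the inverse Fourier transform in $t$ (at frequency $\eta$) of the function $\eta^{-1/4}\bigl(e^{ix\sqrt{\eta}}\hat{f}(\sqrt{\eta})+e^{-ix\sqrt{\eta}}\hat{f}(-\sqrt{\eta})\bigr)$ supported on $\eta>0$. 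Applying Plancherel in $t$ gives
\[\abss{D^{1/2}e^{it\Delta}f(x)}_{L^2_t}^2\lesssim \int_0^\infty\eta^{-1/2}\bigl(\abs{\hat{f}(\sqrt{\eta})}^2+\abs{\hat{f}(-\sqrt{\eta})}^2\bigr)\,d\eta=\abss{\hat{f}}_{L^2_\xi}^2=\abss{f}_{L^2_x}^2,\]
where the last equality reverses the substitution. The bound is uniform in $x$, so taking the supremum yields \eqref{stri2}.

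Neither step presents a serious obstacle; both are classical. The only mild subtleties are (i) carrying out the substitution in the proof of \eqref{stri2} first for Schwartz $f$ and then extending by density, and (ii) in \eqref{stri1}, checking that the one-dimensional admissible range $\frac{2}{q}+\frac{1}{p}=\frac{1}{2}$, $2\le p\le \infty$, avoids the delicate Keel-Tao endpoint: the corresponding time exponent $q=4$ at $p=\infty$ is finite, so HLS applies, and the other extreme $p=2$, $q=\infty$ is handled for free by mass conservation.
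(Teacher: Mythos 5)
Your proposal is correct. The paper itself does not prove this proposition: it simply refers the reader to the literature (citations for the Strichartz estimate~\eqref{stri1}, and Theorem~4.1 of the cited reference for the local smoothing estimate~\eqref{stri2}). Your $TT^*$-plus-Hardy--Littlewood--Sobolev derivation of~\eqref{stri1} and your Plancherel-in-$t$ computation after the change of variables $\eta=\xi^2$ for~\eqref{stri2} are exactly the classical arguments those references contain, so you have supplied the standard proof the paper chose to omit rather than a genuinely different route. One small cosmetic remark: in the line for~\eqref{stri2} the $\lesssim$ before the integral should account for the cross term in $\abs{e^{ix\sqrt{\eta}}\hat f(\sqrt{\eta})+e^{-ix\sqrt{\eta}}\hat f(-\sqrt{\eta})}^2$ (which is controlled by $2(\abs{\hat f(\sqrt{\eta})}^2+\abs{\hat f(-\sqrt{\eta})}^2)$), and the final identity is really $=2\abss{\hat f}_{L^2}^2$; neither affects the conclusion.
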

\begin{proof} The first inequality is the well-known Strichartz estimate. The proof can be found, for example, in \cite{A} and \cite{E}. The proof of \eqref{stri2} can be found in Theorem $4.1$ of \cite{B}. 
\end{proof}
\noindent
The following maximal function type estimate tells us that for the linear equation with time-and-frequency localized initial data in $H^s(\mathbb{R})$ where $s\geq \frac{1}{2}$, the solution is well-controlled in $L^{\gamma}_xL^{\infty}_t(\mathbb{R}\times I )$, where $I=[-1,1]$ when $\gamma=2,3$ and $I=\mathbb{R}$ when $\gamma\geq 4$.
\begin{prop}\label{thm2}
	Let $u\in L^2_x(\mathbb{R})$. 
	\begin{enumerate}
		\item
		If $\gamma=2$ or $3$, assume that $\emph{supp}(\abs{\hat{u}}) \subseteq [N,4N]$ where $N\in 2^{\mathbb{N}}$ or $\emph{supp}(\abs{\hat{u}}) \subseteq [0,1]$, in which case we consider $N=1$, then
		\begin{subequations}\label{li1}
			\begin{align}
			\label{li10}\abss{\chi_{[-1,1]}(t)e^{it\Delta}u(x)}_{L_x^{\gamma}L_t^{\infty}} &\lesssim N^{\frac{1}{\gamma}}\abss{u}_{L_x^2}, \\
			\intertext{\item If $\gamma\geq 4$, assume that $\emph{supp}(\abs{\hat{u}}) \subseteq [N,4N]$ where $N\in 2^{\mathbb{Z}}$, we have}
			\label{li11}\abss{e^{it\Delta}u(x)}_{L_x^{\gamma}L_t^{\infty}} &\lesssim N^{\frac{\gamma-2}{2\gamma}}\abss{u}_{L_x^2}.
			\end{align}
		\end{subequations}
	\end{enumerate}
\end{prop}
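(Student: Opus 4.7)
My plan is to use a $TT^*$ argument combined with the stationary-phase lemmas \Cref{sta1} and \Cref{sta2}. Setting $Tu(x,t):=e^{it\Delta}u(x)$ for $u$ with $\hat u$ supported in $[N,4N]$, the operator $TT^*$ acts as convolution in $(x,t)$ with the kernel
\[ K_\tau(x) := \int e^{ix\xi - i\tau\xi^2}\widetilde{\psi}_N^2(\xi)\,d\xi, \]
for an appropriately widened Littlewood--Paley cutoff $\widetilde{\psi}_N$. The phase $\phi(\xi)=x\xi-\tau\xi^2$ has $\phi''(\xi)\equiv -2\tau$ and stationary point $\xi_*=x/(2\tau)$. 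When $\xi_*\in[N,4N]$ (i.e.\ $\abs{x}\sim N\abs{\tau}$), \Cref{sta2} gives the dispersive bound $\abs{K_\tau(x)}\lesssim \abs{\tau}^{-1/2}\sim N^{1/2}\abs{x}^{-1/2}$; outside this regime, iterated integration by parts via \Cref{sta1} yields rapid decay in either $\abs{x}$ or $\abs{\tau}$. Combined with the trivial bound $\abs{K_\tau(x)}\lesssim N$, this gives
\[ \abs{K_\tau(x)} \lesssim \min\bigl(N,\, N^{1/2}\abs{x}^{-1/2}\bigr),\qquad\text{effectively supported in }\abs{x}\lesssim N\abs{\tau}. \]

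For \eqref{li11} I first attack the endpoint $\gamma=4$: taking the supremum in $t$ reduces matters to bounding $N^{1/2}\bigl(\abs{x}^{-1/2}\ast \abss{G}_{L^1_t}\bigr)(x)$ in $L^4_x$, which follows from the one-dimensional Hardy--Littlewood--Sobolev inequality $\abss{\abs{x}^{-1/2}\ast f}_{L^4_x}\lesssim \abss{f}_{L^{4/3}_x}$. Duality then gives the Kenig--Ruiz estimate $\abss{Tu}_{L^4_xL^\infty_t}\lesssim N^{1/4}\abss{u}_{L^2}$. The other endpoint is trivial: $\abs{e^{it\Delta}u(x)}\leq \abss{\hat u}_{L^1}\lesssim N^{1/2}\abss{u}_{L^2}$ by Cauchy--Schwarz on the frequency support. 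Interpolation at $\theta=1-4/\gamma$ produces the claimed exponent $N^{(\gamma-2)/(2\gamma)}$ for every $\gamma\geq 4$.

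For \eqref{li10} the time truncation $t,s\in[-1,1]$ forces $\abs{\tau}\leq 2$ and confines the kernel to $\abs{x}\lesssim N$. I then compute
\[ \bigl\|\sup_{\abs{\tau}\leq 2}\abs{K_\tau}\bigr\|_{L^1_x} \lesssim \int_0^{1/N} N\,dx + \int_{1/N}^{2N} N^{1/2}x^{-1/2}\,dx \lesssim N, \]
and Young's inequality yields $\abss{TT^*G}_{L^2_xL^\infty_t}\lesssim N\abss{G}_{L^2_xL^1_t}$, hence $\abss{\chi_{[-1,1]}e^{it\Delta}u}_{L^2_xL^\infty_t}\lesssim N^{1/2}\abss{u}_{L^2}$. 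The case $\gamma=3$ follows by complex interpolation between the truncated $\gamma=2$ and $\gamma=4$ estimates at $\theta=2/3$, producing the exponent $\tfrac{1}{3}\cdot\tfrac{1}{2}+\tfrac{2}{3}\cdot\tfrac{1}{4}=\tfrac{1}{3}$. The low-frequency subcase $\supp(\abs{\hat u})\subseteq[0,1]$ is identical with $N=1$: the decay of $K_\tau(x)$ for $\abs{x}\gtrsim 1$ makes the sup-kernel $L^1_x$ of norm $\lesssim 1$.

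The main obstacle is the stationary-phase case analysis required to establish the pointwise kernel bound: one has to carefully separate the dispersive regime $\xi_*\in[N,4N]$ (where \Cref{sta2} applies and yields the $\abs{\tau}^{-1/2}$ decay) from the non-stationary regime where $\phi'$ is bounded below and iterated IBP through \Cref{sta1} gives arbitrary polynomial decay. A subtler conceptual point is that the $L^1_x$-integrability of $\sup_{\abs{\tau}\leq 2}\abs{K_\tau}$ in the $\gamma=2$ argument relies crucially on the time truncation, which restricts the effective spatial support to $\abs{x}\lesssim N$; without this truncation the $N^{1/2}\abs{x}^{-1/2}$ tail fails to be integrable and the argument naturally lands only at $\gamma=4$ rather than $\gamma=2$, which is precisely why \eqref{li11} cannot be stated for $\gamma<4$.
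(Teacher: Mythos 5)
Your proposal is correct, and it takes a genuinely different route from the paper's proof. The paper works from the single dual estimate $\abss{K_i \star F}_{L_x^{\gamma}L_t^{\infty}} \lesssim \abss{K_i}_{L_x^{\gamma/2}L_t^{\infty}}\abss{F}_{L_x^{\gamma'}L_t^1}$ (Young's inequality) and then verifies $\abss{K_i}_{L_x^{\gamma/2}L_t^{\infty}} \lesssim N^{2s_0}$ case by case over the regions $\Omega_1,\Omega_2,\Omega_3$, for each value of $\gamma$ separately; notably it does \emph{not} prove the case $\gamma=4$ (where $\sup_\tau |K_\tau(x)|\sim N^{1/2}|x|^{-1/2}$ just misses $L^2_x$) and instead cites the Kenig–Ruiz literature. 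You instead extract once the pointwise bound $\sup_\tau |K_\tau(x)| \lesssim \min(N, N^{1/2}|x|^{-1/2})$ (plus rapid decay in the non-stationary range), then dispatch $\gamma=4$ directly via Hardy–Littlewood–Sobolev on the $|x|^{-1/2}$ tail, $\gamma=2$ via the truncated $L^1_x$ kernel bound and Young, and all the remaining $\gamma$ by log-convexity of $L^p_x$ norms applied to $g(x)=\abss{Tu(x,\cdot)}_{L^\infty_t}$ (there is no need to invoke complex interpolation of $L^p_xL^\infty_t$ spaces; the elementary Hölder inequality $\abss{g}_{L^{p_\theta}}\leq\abss{g}_{L^{p_0}}^{1-\theta}\abss{g}_{L^{p_1}}^{\theta}$ suffices, and you may want to say so explicitly). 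Your approach is arguably tidier: it gives a self-contained proof of the $\gamma=4$ endpoint where the paper cites, and replaces three separate kernel computations by one pointwise estimate plus interpolation. The stationary-phase analysis underlying the kernel bound is the same as the paper's (Lemmas \ref{sta1}–\ref{sta2}, split into trivial/non-stationary/dispersive regimes), and your observation that the time truncation confines the effective support to $|x|\lesssim N$ and thereby enables the $\gamma<4$ estimates is exactly the paper's mechanism.
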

\noindent
\emph{Remark: } We see that the estimate \eqref{li10} is local in time while \eqref{li11} is global. By setting $\gamma = d-1$, this leads to the local and global results in \Cref{thmm1} and \Cref{thmm}.
\begin{proof}
	We refer to Theorem $2.5$ in \cite{B} for a proof of the case $\gamma=4$. Let $s_0=s_0(\gamma)=\frac{1}{\gamma}$ for $\gamma=2,3$ and $s_0=\frac{\gamma-2}{2\gamma}$ for $\gamma\geq 5$. We define an operator $T:L_x^2 \rightarrow L_x^{\gamma}L_t^{\infty}$ by $Tu=\chi_{[-1,1]}(t)e^{it\Delta}u$, yielding $T^*F=\int_{-1}^{1}e^{-it\Delta}F \ dt$. Using the $TT^*$ argument, it follows that \eqref{li1} is equivalent to either of the following estimates for $F\in L_x^2L_t^1(\mathbb{R}\times\mathbb{R})$ with the same frequency support as $u$ in the cases of $\gamma=2,3$.
	\begin{align}
	\left\|\int_{-1}^{1}e^{-it\Delta}F(x,t) \ dt\right\|_{L_x^2} &\lesssim N^{s_0}\abss{F}_{L_x^{\frac{\gamma}{\gamma-1}}L_t^1} \label{dual}\\
	\label{li2}\left\|\chi_{[-1,1]}(t)\int_{-1}^{1}e^{i(t-s)\Delta}F(x,s) \ ds\right\|_{L_x^{\gamma}L_t^{\infty}} &\lesssim N^{2s_0}\abss{F}_{L_x^{\frac{\gamma}{\gamma-1}}L_t^1}.
	\end{align}
	For $\gamma\geq 5$, we have the same estimates but with integrals on $\mathbb{R}$. Thus, it suffices to prove \eqref{li2}. First, we assume that $F\in \mathcal{S}(\mathbb{R})$. Since $F=P_{\leq 4N}F$, the inverse Fourier transform of $e^{i(t-s)\xi^2}\widehat{F}$ is defined by
	\begin{equation*}
	\begin{split}
	\mathcal{F}_x^{-1}\left(e^{i(t-s)\xi^2}\widehat{F}(\xi,s)\right)  &=c\int_{\mathbb{R}}e^{i(t-s)\xi^2+ix\xi}\widehat{F}(\xi,s) \ d\xi \\
	&=\mathcal{F}_x^{-1}\left(e^{-i(t-s)\xi^2}\psi\left(\frac{\xi}{4N}\right)\right)*F(x,s).
	\end{split}
	\end{equation*}
	Since $-1\leq t,s \leq 1$ implies $-2\leq t-s \leq 2$, the term on the right of \eqref{li2} can be replaced by
	\begin{equation*}
	\begin{split}
	&\int_{\mathbb{R}}\mathcal{F}_x^{-1}\left(\chi_{[-2,2]}(t-s)e^{-i(t-s)\xi^2}\psi\left(\frac{\xi}{4N}\right)\right)*F(x,s) \ ds \\
	&=\mathcal{F}_x^{-1}\left(\chi_{[-2,2]}(t)e^{-it\xi^2}\psi\left(\frac{\xi}{4N}\right)\right)\star F(x,t) \\
	&= c_1K_1\star F
	\end{split}
	\end{equation*}
	where $\star$ denotes the space-time convolution and
	\begin{equation}\label{non}K_1(x,t)=\int_{\mathbb{R}}e^{-it\xi^2+ix\xi}\chi_{[-2,2]}(t)\psi\left(\frac{\xi}{4N}\right) \ d\xi.\end{equation}
	Similarly, for $\gamma\geq 5$ we have
	\[\int_{\mathbb{R}}e^{i(t-s)\Delta}F(x,s) \ ds = c_2K_2\star F\]
	where
	\begin{equation}\label{non2}
	K_2(x,t)=\int_{\mathbb{R}}e^{-it\xi^2+ix\xi}\psi\left(\frac{\xi}{4N}\right) \ d\xi.
	\end{equation}
	To finish the proof, we need the following lemma.
	\begin{lemma}\label{lemma1}
		Let $K_1(x,t)$ and $K_2(x,t)$ be as in \eqref{non} and \eqref{non2}. Then, for $i=1,2$
		\begin{equation}\label{k}
		\abss{K_i}_{L_x^{\frac{\gamma}{2}}L_t^{\infty}}\lesssim N^{2s_0}.
		\end{equation}
	\end{lemma}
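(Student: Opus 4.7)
The plan is to reduce both kernels to a single model via scaling and then estimate them pointwise by stationary phase. Substituting $\eta = \xi/(4N)$ and setting $y = 4Nx$, $s = 16N^2 t$, one obtains $K_2(x,t) = 4N\,H(y,s)$ and, on the support of the time cutoff, $K_1(x,t) = 4N\,\chi_{|s|\le 32 N^2}(s)\,H(y,s)$, where
\[
H(y,s) := \int_{\mathbb{R}} e^{i(y\eta - s\eta^2)}\,\psi(\eta)\,d\eta.
\]
The phase $\phi(\eta) = y\eta - s\eta^2$ has critical point $\eta_0 = y/(2s)$ and $\phi''(\eta) = -2s$. Thus, modulo the time cutoff, both $K_1$ and $K_2$ are rescalings of a single universal kernel $H$, and the lemma reduces to computing suitable $L^{\gamma/2}_y L^\infty_s$ norms of $H$ (unrestricted in $s$ for $K_2$, restricted to $|s|\le 32N^2$ for $K_1$).

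The main pointwise work is the stationary-phase analysis of $H$, split into three regimes depending on the relative sizes of $|y|$ and $|s|$. When $\eta_0 \in \operatorname{supp}\psi \subset[-4,4]$ (that is, $|y| \leq 8|s|$) and $|s| \gtrsim 1$, Lemma~\ref{sta2} gives $|H(y,s)| \lesssim |s|^{-1/2}$; when $|y| \geq 16|s|$, one has $|\phi'| \gtrsim |y|$ on $\operatorname{supp}\psi$, so repeated integration by parts via Lemma~\ref{sta1} yields $|H(y,s)| \lesssim_k |y|^{-k}$ for every $k$; and the trivial bound $|H| \lesssim 1$ handles any remaining region, including $|s|\lesssim 1$. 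Optimizing the supremum in $s$ over $\mathbb{R}$ (the worst case occurs at $|s|\sim|y|$) gives the envelope
\[
\sup_{s\in\mathbb{R}} |H(y,s)| \lesssim (1+|y|)^{-1/2}.
\]
For the truncated supremum, the same envelope holds for $|y| \lesssim N^2$, while the non-stationary regime covers all $|s|\le 32 N^2$ when $|y|\gtrsim N^2$, giving rapid decay $|y|^{-k}$ there.

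It remains to integrate these envelopes in $y$ and unscale via $y = 4Nx$, which contributes a Jacobian $1/N$ and a prefactor $(4N)^{\gamma/2}$. For $K_2$ with $\gamma \geq 5$, the function $(1+|y|)^{-\gamma/4}$ is integrable since $\gamma/4 > 1$, so one finds $\|K_2\|_{L^{\gamma/2}_x L^\infty_t}^{\gamma/2} \lesssim N^{\gamma/2-1}$, i.e.\ $\|K_2\|_{L^{\gamma/2}_x L^\infty_t} \lesssim N^{1-2/\gamma} = N^{2s_0}$. For $K_1$ with $\gamma \in\{2,3\}$, the integral of $(1+|y|)^{-\gamma/4}$ over $|y| \lesssim N^2$ is a positive power of $N$ (of order $N$ for $\gamma=2$ and $N^{1/2}$ for $\gamma=3$), and combining with the $N^{\gamma/2}$ prefactor and the $N^{-1}$ Jacobian produces $N^{\gamma s_0}$ before taking the $\gamma/2$-th root, i.e.\ exactly the desired $N^{2s_0}$; the $|y|^{-k}$ tail is negligible for $k$ large.

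The main obstacle is the case analysis underlying the pointwise bound on $H$: one must carefully verify that the regime $|s|\lesssim 1$ does not damage the envelope, and, for the time-restricted kernel $K_1$, that the cut-off $|s| \le 32 N^2$ remains compatible with the stationary-phase window $|y|\lesssim |s|$ precisely on the $y$-range that drives the $L^{\gamma/2}_y$ integral. Once these compatibilities are checked, the arithmetic of the final scaling matches $2s_0$ in both the local ($\gamma=2,3$) and global ($\gamma\geq 5$) cases.
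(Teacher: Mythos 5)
Your proposal is correct and follows essentially the same route as the paper: split into a non-stationary region (handled by Lemma~\ref{sta1}), a stationary region (handled by Lemma~\ref{sta2}), and a trivial region, then integrate the resulting pointwise bounds in $x$, using the finite-time constraint in the $\gamma=2,3$ case to cut off the $|x|^{-1/2}$ (equivalently $|y|^{-1/2}$) tail. The only difference is cosmetic: you rescale to a universal kernel $H$ and package the stationary-phase bounds into an envelope $(1+|y|)^{-1/2}$, whereas the paper performs the same estimates in the original $(x,t)$ variables over the regions $\Omega_1,\Omega_2,\Omega_3$.
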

	\vspace{8pt}
	\noindent We continue the proof of \Cref{thm2}. By applying Young's inequality and \Cref{lemma1}, we obtain
	\[\abss{K_i\star F}_{L_x^{\gamma}L_t^{\infty}}\leq \abss{K_i}_{L_x^{\frac{\gamma}{2}}L_t^{\infty}}\abss{F}_{L_x^{\frac{\gamma}{\gamma-1}}L_t^1}\]
	as desired. We then finish the proof by the usual density argument.
\end{proof}
\begin{proof}[Proof of \Cref{lemma1}] Let $I=[-1,1]$ when $\gamma=2,3$ and $I=\mathbb{R}$ when $\gamma\geq 4$. We divide $\mathbb{R}\times I$ into three regions
	\begin{equation*}
	\begin{split}
	\Omega_1 &:= \{(x,t)\in \mathbb{R}\times I \ | \ \abs{x}\leq \frac{1}{N}\} \\
	\Omega_2 &:= \{(x,t)\in \mathbb{R}\times I \ | \ \abs{x}\geq 64N\abs{t} \ , \ \abs{x}>\frac{1}{N}\} \\
	\Omega_3 &:= \{(x,t)\in \mathbb{R}\times I \ | \ \abs{x}< 64N\abs{t} \ , \ \abs{x}>\frac{1}{N}\},
	\end{split}
	\end{equation*}
	and we will estimate $K_i(x,t)$ in each region. For a fixed $x\in\mathbb{R}$ and $1\leq i \leq 3$, we define $\Omega_{x,i} := \{t\in I \ | \ (x,t)\in \Omega_i \}$. We consider the following two cases of values of $\gamma$. \\
	\begin{enumerate}[wide, labelwidth=!, labelindent=0pt]
		\item[\textbf{Case 1: }] $\gamma=2,3$.
		Note that in this case we always assume that $N\geq 1$. By a change of variable $\eta=\frac{\xi}{4N}$, we obtain
		\[K_1(x,t)=N\int_{\mathbb{R}}\chi_{[-2,2]}e^{-i16tN^2\eta^2+i4xN\eta}\psi(\eta) \ d\eta\]
		A simple estimate on $\Omega_1$ shows that
		\begin{equation}\label{li3}
		\begin{split}
		\int_{\abs{x}\leq \frac{1}{N}} \abs{K_{1}(x,t)}^{\frac{\gamma}{2}} \ dx \lesssim \frac{1}{N}\cdot N^{\frac{\gamma}{2}}\Big(\int_{\mathbb{R}} \psi(\eta) \ d\eta \Big)^{\frac{\gamma}{2}} \sim N^{\frac{\gamma-2}{2}}\leq N.
		\end{split}
		\end{equation}
		Next we consider the norm on $\Omega_2$. Note that the integrand in $K_{1}$ vanishes if $\abs{\eta} \geq 4$. Factoring out $-i16tN^2\eta^2+i4xN\eta=-i4xN(\eta-\frac{4tN}{x}\eta^2):=-ixN\phi_1(\eta)$ yields \[\abs{\phi'_1(\eta)}=\abs{1-8\frac{tN}{x}\eta}\geq 1-32\left|\frac{tN}{x}\right| \geq 1-32\cdot\frac{1}{64}=\frac{1}{2},\]
		for any $t\in \Omega_{x,2}$. Therefore, $\phi_1$ has no critical point in this region. By \Cref{sta1}, the integral in $K_{1}$ is bounded by $\abs{Nx}^{-k}$ for all $k\geq 0$. In particular, by choosing $k=2$, we obtain $\abs{K_{1}(x,t)} \lesssim N(N\abs{x})^{-2}=N^{-1}\abs{x}^{-2}$. We finish by computing the $L_x^{\frac{\gamma}{2}}L_t^{\infty}$ norm on $\Omega_2$:
		\begin{equation}\label{li4}
		\begin{split}
		\int \sup_{t\in\Omega_{x,2}}\abs{K_{1}(x,t)}^{\frac{\gamma}{2}} \ dx \lesssim N^{(\gamma-1)-\frac{\gamma}{2}} = N^{\frac{\gamma-2}{2}} \leq N.
		\end{split}
		\end{equation}
		\noindent Now we consider the norm on $\Omega_3$. Factoring out the exponential term $-i16tN^2\eta^2+i4xN\eta=-i4tN^2(4\eta^2-\dfrac{x\eta}{Nt}):=i4tN^2\phi_2(\eta)$ yields $\phi''_2(\eta)\gtrsim 1$, so we can apply \Cref{sta2} to $K_{1}$.
		\begin{equation}\label{li6}
		\begin{split}
		\left|K_{1}(x,t)\right| &= N\left|\int_{\mathbb{R}}e^{-itN^2\eta^2+ixN\eta}\psi(\eta) \ d\eta\right| \\ &\lesssim N\cdot \frac{1}{N\abs{t}^{\frac{1}{2}}} \\
		&< \frac{64N^{\frac{1}{2}}}{\abs{x}^{\frac{1}{2}}}.
		\end{split}
		\end{equation}
		Now we compute the $L_x^{\frac{\gamma}{2}}L_t^{\infty}$ norm of $K_{1}$. Observe that the finite time restriction yields $\abs{x} \lesssim N\abs{t} \leq 2N$ on $\Omega_3$. Therefore,
		\begin{equation}\label{li5}
		\begin{split}
            \int \sup_{t\in\Omega_{x,3}}\abs{K_1(x,t)}^{\frac{\gamma}{2}} \ dx \lesssim \int_{\abs{x}<64N\abs{t}} N^{\frac{\gamma}{4}}\abs{x}^{-\frac{\gamma}{4}} \ dx \lesssim N^{\frac{\gamma}{4} -\frac{\gamma-4}{4}}=  N.
		\end{split}
		\end{equation}
		Combining \eqref{li3},\eqref{li4} and \eqref{li5}, we have that
		\[ 	\abss{K_1}_{L_x^{\frac{\gamma}{2}}L_t^{\infty}} \lesssim N^{\frac{2}{\gamma}}.\]
		\item[\textbf{Case 2:} ]$\gamma \geq 5$. Since the estimates in \eqref{li3} and \eqref{li4} do not require any time restriction, we get the same results for $K_{2}$.
		\begin{equation}\label{lii1}
		\int_{\Omega_1\cup \Omega_2} \abs{K_{2}}^{\frac{\gamma}{2}} \ dx \lesssim  N^{\frac{\gamma-2}{2}}.
		\end{equation}	
		On $\Omega_3$, we have the same estimate as in \eqref{li6} for $K_{2}$. From the fact that $\abs{x}>\frac{1}{N}$ in this region, we have
		\begin{equation}\label{lii2}
		\int \sup_{t\in\Omega_{x,3}} \abs{K_2(x,t)}^{\frac{\gamma}{2}} \ dx\lesssim \int_{\abs{x}>\frac{1}{N}} N^{\frac{\gamma}{4}}\abs{x}^{-\frac{\gamma}{4}} \ dx \lesssim N^{\frac{\gamma}{4} +\frac{\gamma-4}{4}}=  N^{\frac{\gamma-2}{2}}.
		\end{equation}
		Note that we did not use the finite time restriction in this case. Combining \eqref{lii1} and \eqref{lii2}, we have that
		\[ 	\abss{K_2}_{L_x^{\frac{\gamma}{2}}L_t^{\infty}} \lesssim N^{\frac{\gamma-2}{\gamma}}.\]
	\end{enumerate}
\end{proof}
\noindent To estimate a product of functions as seen in the nonlinearity of DNLS, one usually employs the bilinear estimate which splits the product into estimating individual functions (see \cite{Bour02} where Bourgain proved the estimate in two dimensions).
\begin{theorem}[Bilinear Strichartz Estimate] \label{bi}
	For any $u,v\in L^2_x$, we have
	\begin{align}  						\abss{P_{\lambda}(e^{it\Delta}u\overline{e^{it\Delta}v})}_{L^2_{x,t}} &\lesssim \lambda^{-\frac{1}{2}}\abss{u}_{L^2}\abss{v}_{L^2} \label{bi15}
        \intertext{In addition, if $\hat{u}$ and $\hat{v}$ have disjoint supports and $\alpha:=\inf\abs{ \supp(\hat{u})-\supp(\hat{v})}$ is strictly positive, then we have}
	\abss{e^{it\Delta}ue^{it\Delta}v}_{L^{2}_{x,t}} &\lesssim \alpha^{-\frac{1}{2}}\abss{u}_{L^2}\abss{v}_{L^2}.\label{bil}
	\end{align}
\end{theorem}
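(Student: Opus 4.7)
The strategy for both estimates is a direct computation on the Fourier side, exploiting the paraboloid support of the free Schr\"odinger propagator: the space-time Fourier transform of $e^{it\Delta}w$ is $\hat{w}(\xi)\delta(\tau+\xi^2)$, while that of $\overline{e^{it\Delta}w}$ is supported on $\{\tau-\xi^2=0\}$. The plan is to apply Plancherel in $(x,t)$, express each product as a convolution of these two measures, use the delta functions to eliminate one integration, and read off the Jacobian of the resulting change of variables, whose size is exactly what produces the claimed gain.

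For \eqref{bi15}, convolving the two paraboloid-supported distributions leaves a single integral in $\xi_1$ with a delta factor
\[\delta\bigl(\tau+\xi_1^2-(\xi-\xi_1)^2\bigr)=\delta\bigl(\tau+(2\xi_1-\xi)\xi\bigr).\]
The phase is \emph{linear} in $\xi_1$, so the delta pins down a unique $\xi_1=(\xi^2-\tau)/(2\xi)$ with Jacobian $1/(2|\xi|)$. Squaring, integrating over $(\xi,\tau)$, and changing variables from $\tau$ back to $\xi_1$ (which restores a factor $2|\xi|$) gives
\[\|P_\lambda(e^{it\Delta}u\cdot \overline{e^{it\Delta}v})\|_{L^2_{x,t}}^2 \lesssim \int \frac{1}{|\xi|}\,|\hat{u}(\xi_1)|^2\,|\hat{v}(\xi_1-\xi)|^2\,d\xi_1\,d\xi,\]
and the $P_\lambda$ cutoff forces $|\xi|\sim\lambda$, yielding the $\lambda^{-1/2}$ bound after Fubini.

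For \eqref{bil}, the same reduction produces the delta function $\delta(\tau+\xi_1^2+(\xi-\xi_1)^2)$. Now the phase is \emph{quadratic} in $\xi_1$, so the zero set consists of two symmetric roots $\xi_1^\pm$. I would write the Fourier transform as a sum over these roots and use $|a+b|^2\leq 2(|a|^2+|b|^2)$ to treat each branch independently. For either root, the change of variables $(\xi,\tau)\leftrightarrow(\xi_1,\xi_2)$ with $\xi=\xi_1+\xi_2$, $\tau=-(\xi_1^2+\xi_2^2)$ has Jacobian $2|\xi_1-\xi_2|$. The disjoint-support hypothesis forces $|\xi_1-\xi_2|\geq \alpha$ on the integration set, and this pointwise bound on the Jacobian extracts the uniform weight $\alpha^{-1}$, giving the claimed $\alpha^{-1/2}$ after Plancherel.

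The only delicate step is handling both branches in \eqref{bil} and verifying that the separation hypothesis is precisely what lower-bounds the Jacobian uniformly. A minor point in \eqref{bi15} is that $1/|\xi|$ is singular at $\xi=0$, but the projection $P_\lambda$ removes a neighborhood of the origin, so this causes no issue; in a scale-invariant bound (without $P_\lambda$) that same singularity would be the obstruction, which is why the frequency localization is essential in the first estimate while the spectral gap $\alpha$ plays the analogous role in the second.
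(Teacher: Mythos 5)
Your proof is correct and follows essentially the same route as the paper: both hinge on the Fourier-side change of variables $(\xi_1,\xi_2)\mapsto(\xi_1\mp\xi_2,\xi_1^2\mp\xi_2^2)$ with Jacobian $2|\xi_1-\xi_2|$, which is bounded below by $\lambda$ (via the output frequency projection) in \eqref{bi15} and by $\alpha$ (via the disjoint supports) in \eqref{bil}. The paper organizes the computation by duality and Cauchy--Schwarz whereas you work directly with Plancherel and the paraboloid delta measures, and your explicit two-branch splitting in \eqref{bil} makes the two-to-one nature of the change of variables precise; these are purely presentational differences.
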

\begin{proof}
	We follow the proof in \cite[Theorem 2.9]{F}. 
	By duality, this is equivalent to showing that for any $F\in C_c^{\infty}$,  $$\Big|\int F(\xi-\eta,\xi^2-\eta^2)\psi_{>\lambda}(\xi-\eta)\hat{u}(\xi)\bar{\hat{v}}(\eta) \ d\xi d \eta\Big| \lesssim \lambda^{-\frac{1}{2}}\abss{F}_{L^2_{\xi,\tau}}\abss{\hat{u}}_{L^2_{\xi}}\abss{\hat{v}}_{L^2_{\xi}}.$$
	For each fixed $\alpha$ and $\beta$, let $(\xi_{\alpha\beta}, \eta_{\alpha\beta})$ be a solution to $\alpha = \xi^2-\eta^2$ and $\beta = \xi-\eta$. We see that the change of variables $(\xi,\eta) \mapsto (\alpha,\beta)$ gives the Jacobian $J = 2(\eta-\xi)$. This together with Cauchy-Schwarz yield
	\begin{align*}
	\Big|\int F(\xi-\eta&,\xi^2-\eta^2)\psi_{>\lambda}(\xi-\eta)\hat{u}(\xi)\bar{\hat{v}}(\eta) \ d\xi d \eta\Big| \\
	&= \Big|\int F(\alpha,\beta)\psi_{>\lambda}(\beta)\hat{u}(\xi_{\alpha\beta})\bar{\hat{v}}(\eta_{\alpha\beta}) \frac{1}{J}\ d\alpha d \beta\Big| \\
	&\leq \abss{F}_{L^2_{\xi,\tau}}\Big(\int \abs{\psi_{>\lambda}(\beta)}^2\abs{\hat{u}(\xi_{\alpha\beta})}^2\abs{\hat{v}(\eta_{\alpha\beta})}^2 \frac{1}{J^2} \ d\alpha d \beta\Big)^{\frac{1}{2}} \\
	&= \abss{F}_{L^2_{\xi,\tau}}\left(\int \abs{\psi_{>\lambda}(\xi-\eta)}^2\abs{\hat{u}(\xi)}^2\abs{\hat{v}(\eta)}^2 \frac{1}{J} \ d\xi d \eta\right)^{\frac{1}{2}} \\
	&\lesssim \lambda^{-\frac{1}{2}}\abss{F}_{L^2_{\xi,\tau}}\abss{\hat{u}}_{L^2_{\xi}}\abss{\hat{v}}_{L^2_{\xi}}.
	\end{align*}
	This concludes the proof of \eqref{bi15}. The proof for \eqref{bil} is essentially the same, but $\xi-\eta$ is replaced by $\xi+\eta$, $\xi^2-\eta^2$ is replaced by $\xi^2+\eta^2$ and there is no $\psi_{>\lambda}$. The conclusion follows from the observation that 
	\[\frac{1}{\abs{J}}=\frac{1}{2\abs{\eta-\xi}}\gtrsim \frac{1}{\alpha}.\]
\end{proof}
\noindent
We will need a variant of this estimate adapted to the $X^s$ space \eqref{bi2} for our trilinear estimate \eqref{linn6}. The details will be explained in the next section.

\section{The main linear estimate}\label{sec3}
\noindent In this section, we consider a nonlinear Schr{\"o}dinger equation
\begin{equation}
\begin{split}\label{eq}
iu_t +\Delta u &= F \\
u(x,0) &= u_0.
\end{split}
\end{equation}
Let $I=[-1,1]$ if $d=3,4$ and $I=\mathbb{R}$ if $d\geq 5$. A solution $u(x,t)\in \mathbb{R}\times I$ can be represented by the Duhamel formula
\begin{equation}u(x,t)=e^{it\Delta}u_0-i\int_0^t e^{i(t-s)\Delta} F(s) \ ds.\end{equation}
\noindent
In the proof of \Cref{thmm1} and \Cref{thmm}, the spaces that we use are based on the following norms which take a function $u$ supported at dyadic frequency interval $\sim N$.
\begin{equation}
\begin{split}\label{norm}
\abss{u}_{Y_N} &= \inf\{N^{-\frac{1}{2}}\abss{u_1}_{L_x^1L_t^2} +\abss{u_2}_{L_{t}^1L_x^2}\ |\ u_1+u_2=u\}\\
\abss{u}_{X_N} & =\abss{u}_{L_t^{\infty}L_x^2}+N^{-s_0}\abss{u}_{L_x^{d-1}L_t^{\infty}}+N^{\frac{1}{2}}\abss{u}_{L_x^{\infty}L_t^2} \\
& \ \ \ +N^{-\frac{1}{2}}\abss{(i\partial_t+\Delta)u}_{Y_N} ,
\end{split}
\end{equation}
where $L_t^{\infty}L_x^2=L_t^{\infty}L_x^2(I\times \mathbb{R})$ and $L_x^{p}L_t^{q}=L_x^{p}L_t^{q}(\mathbb{R} \times I)$. These norms satisfy the following linear estimate, which makes them suitable for the contraction argument.
\begin{theorem}
	Let $u$ be a solution to equation \eqref{eq}. Then,
	\begin{equation}\label{main}
	\abss{P_Nu}_{X_N} \lesssim \abss{u_0}_{L^2_x}+\abss{P_NF}_{Y_N}.
	\end{equation}
\end{theorem}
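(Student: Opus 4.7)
My plan is to split the solution via the Duhamel formula
\[
P_N u(x,t) = e^{it\Delta} P_N u_0 \;-\; i\int_0^t e^{i(t-s)\Delta} P_N F(s)\, ds
\]
and estimate the linear piece $u_{\text{lin}}$ and the Duhamel piece $u_{\text{nl}}$ separately in each of the four components of $\|\cdot\|_{X_N}$. For $u_{\text{lin}}$, the four bounds are essentially immediate from the inputs already collected: the $L_t^\infty L_x^2$ bound is the $L^2$ isometry of the propagator; the $N^{-s_0}\|\cdot\|_{L_x^{d-1}L_t^\infty}$ bound is the maximal-function estimate \eqref{li1} with $\gamma=d-1$; the $N^{1/2}\|\cdot\|_{L_x^\infty L_t^2}$ bound is the local smoothing estimate \eqref{stri2}; and the fourth component is trivial because $(i\partial_t+\Delta) e^{it\Delta}P_N u_0 = 0$.

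For $u_{\text{nl}}$, I would exploit the defining infimum of the $Y_N$ norm to split $P_N F = F_1 + F_2$ with $N^{-1/2}\|F_1\|_{L_x^1 L_t^2} + \|F_2\|_{L_t^1 L_x^2} \lesssim \|P_N F\|_{Y_N}$, and estimate the two corresponding Duhamel pieces in each norm via dualization of the linear estimates. Concretely, the $F_2$ component is handled by the energy inequality, by the Strichartz estimate \eqref{stri1} followed by Minkowski in time, and by the inhomogeneous local smoothing estimate obtained by combining \eqref{stri2} with energy; the $F_1$ component is handled by the dual of \eqref{stri2}, by a double smoothing estimate (pairing \eqref{stri2} with its dual) that gains the full $N^{-1}$ and thus exactly compensates the $N^{1/2}$ loss coming from the $Y_N$ definition of $F_1$, and by pairing the maximal function estimate \eqref{li1} with the dual of \eqref{stri2} to obtain a mixed bound of the form $\|u_1\|_{L_x^{d-1}L_t^\infty} \lesssim N^{s_0-1/2}\|F_1\|_{L_x^1 L_t^2}$. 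A quick bookkeeping shows each of these combines with the appropriate weight in $\|\cdot\|_{X_N}$ to give the claimed bound by $\|P_N F\|_{Y_N}$.

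The main obstacle, and the reason the paper invokes the decomposition from \cite{BeIoKeTa}, is that the Duhamel integral is truncated to $[0,t]$ rather than performed on all of $\mathbb{R}$, so the clean dual/$TT^*$ identities above apply only to the untruncated operator. The plan is to follow \cite{BeIoKeTa} and write the truncated Duhamel operator as a superposition of frequency-localised building blocks that each admit an extension to all times without breaking the $L_x^p L_t^q$ structure of the relevant mixed norms; the cutoffs only cost a harmless constant since all relevant norms other than the $L_t^\infty L_x^2$ one are translation-invariant in time and the propagator is an $L^2$ isometry. Once the truncation is absorbed by this decomposition, the componentwise estimates listed above apply directly and, summed together with the linear contribution, yield \eqref{main}. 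Finally, to deal with the fourth component of $\|\cdot\|_{X_N}$, I simply note $(i\partial_t+\Delta)P_N u = P_N F$ by the equation, so $N^{-1/2}\|(i\partial_t+\Delta)P_N u\|_{Y_N} = N^{-1/2}\|P_N F\|_{Y_N} \lesssim \|P_N F\|_{Y_N}$ for the frequency range under consideration.
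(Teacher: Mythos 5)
Your proposal has the correct skeleton — Duhamel split, linear estimates for the homogeneous piece, infimum split $P_N F = F_1 + F_2$ per the $Y_N$ norm, Minkowski for the $L^1_t L^2_x$ piece, and recognition that for the $L^1_x L^2_t$ piece the retarded (truncated) Duhamel integral is the real obstruction, to be resolved by the decomposition of \cite{BeIoKeTa}. That is all consistent with the paper. However, the one step that actually carries the proof is described incorrectly, and the justification you offer would not close the argument.

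The claim that ``the cutoffs only cost a harmless constant since all relevant norms other than $L^\infty_t L^2_x$ are translation-invariant in time and the propagator is an $L^2$ isometry'' does not address the real issue. The problem with passing from the untruncated operator $\int_{\mathbb{R}} e^{i(t-s)\Delta} F_1(s)\,ds$ to the retarded one $\int_0^t e^{i(t-s)\Delta} F_1(s)\,ds$ is that the Christ--Kiselev transference principle fails precisely for the smoothing/dual-smoothing pairing, because the time exponent is $2$ on both the input ($L^1_x L^2_t$) and the output ($L^\infty_x L^2_t$). Time-translation invariance of the norms plays no role in fixing this, and the cutoff is not ``a harmless constant.'' This is exactly why a structural decomposition is needed rather than a soft transference.

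You also skip the reduction that makes the decomposition applicable: for $F_1\in L^1_x L^2_t$ one fixes the spatial point $y$ and writes
\[
\int_0^t e^{i(t-s)\Delta} P_N F_1(s)\,ds = \int_{\mathbb{R}} w_y\,dy ,
\]
so that the truncated operator is a superposition over $y$ of pointwise pieces $w_y$ built from $F_1(y,\cdot)\in L^2_t$, and it is each $w_y$ — not the full Duhamel term — that is decomposed. The decomposition itself is not a ``superposition of frequency-localised building blocks that each admit an extension to all times'': it is the explicit identity
\[
w_y = -e^{it\Delta}\mathcal{L}v_y - \bigl(P_{<N/2^{50}}\mathbf{1}_{x>0}\bigr)e^{it\Delta} v_y + h_y ,
\]
where $v_y,\mathcal{L}v_y$ are $L^2_x$ data controlled by $N^{-1/2}\|F_1(y,\cdot)\|_{L^2_t}$, the second piece has a smoothed Heaviside multiplier whose frequency support $\ll N$ keeps the product localized near $N$, and $h_y$ is an error term with two extra derivatives, $\|\Delta h_y\|_{L^2_{x,t}} + \|\partial_t h_y\|_{L^2_{x,t}}\lesssim N^{1/2}\|F_1(y,\cdot)\|_{L^2_t}$. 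The estimation of $h_y$ is a genuine computation — one shows the Fourier multiplier $A(\xi,\tau)$ defining $\hat h_y$ is in $L^2_{\xi,\tau}$ with norm $\lesssim N^{-1/2}$ by splitting near and away from the parabola — and your outline does not anticipate any of this. Without a correct account of the decomposition and of why each piece is estimable, the argument as stated does not go through, even though the list of estimates (Strichartz \eqref{stri1}, smoothing \eqref{stri2}, maximal function \eqref{li1}) that you would feed into those pieces is the right one.
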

\noindent This immediately follows from the Duhamel formula and the following three propositions.
\begin{prop} For any $u_0\in L^2_x(\mathbb{R})$, we have
	\begin{equation}\label{linear}\abss{e^{it\Delta}P_Nu_0}_{X_N} \lesssim \abss{u_0}_{L^2_x}. \end{equation}
\end{prop}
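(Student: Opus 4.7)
The plan is to reduce the estimate to a term-by-term verification against the four constituents of the $X_N$ norm, each of which is directly handled by a result already established earlier in the paper. Set $u := e^{it\Delta} P_N u_0$.

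First, observe that $u$ solves the free Schrödinger equation, so $(i\partial_t + \Delta) u \equiv 0$ and the forcing term $N^{-\frac{1}{2}}\abss{(i\partial_t+\Delta)u}_{Y_N}$ in $\abss{u}_{X_N}$ vanishes identically. For the $L_t^\infty L_x^2$ component, I would invoke $L^2$-conservation for the linear propagator together with boundedness of the Littlewood--Paley projector on $L^2$, giving $\abss{u}_{L_t^\infty L_x^2} = \abss{P_N u_0}_{L_x^2} \lesssim \abss{u_0}_{L_x^2}$.

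Next, for the local smoothing component $N^{1/2}\abss{u}_{L_x^\infty L_t^2}$, I would apply the Kato smoothing estimate \eqref{stri2} to $P_N u_0$. Because $P_N u_0$ is spectrally supported where $|\xi| \sim N$, the operator $D^{1/2}$ is (up to bounded multiplier) multiplication by $N^{1/2}$, so
\[
N^{1/2}\abss{e^{it\Delta} P_N u_0}_{L_x^\infty L_t^2} \;\lesssim\; \abss{D^{1/2} e^{it\Delta} P_N u_0}_{L_x^\infty L_t^2} \;\lesssim\; \abss{P_N u_0}_{L^2} \;\lesssim\; \abss{u_0}_{L^2}.
\]
For the maximal function component $N^{-s_0}\abss{u}_{L_x^{d-1} L_t^\infty}$, I would apply Proposition~\ref{thm2} with $\gamma = d-1$: in the cases $d = 3,4$ use the local-in-time bound \eqref{li10}, which is consistent with the choice $I = [-1,1]$ in the definition of the mixed norms; in the cases $d \geq 5$ use the global-in-time bound \eqref{li11}, consistent with $I = \mathbb{R}$. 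Either way the exponent produced by Proposition~\ref{thm2} matches the normalizing factor $N^{-s_0}$ in $\abss{\cdot}_{X_N}$, yielding $N^{-s_0}\abss{u}_{L_x^{d-1} L_t^\infty} \lesssim \abss{u_0}_{L^2}$.

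Summing the four contributions produces \eqref{linear}. There is no real obstacle here: the statement is a compendium of the linear estimates already assembled in Section~\ref{sec2}, and the only mildly delicate point is matching the temporal interval $I$ to the correct version of the maximal function estimate based on whether $d \leq 4$ or $d \geq 5$, together with checking that the low-frequency regime $N \lesssim 1$ (which is only relevant when $d = 3,4$) is covered by the $N = 1$ case allowed in \eqref{li10}.
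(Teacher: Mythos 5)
Your proof is correct and matches the paper's approach: the paper proves this proposition in one line by citing the Strichartz estimate \eqref{stri1}, the Kato smoothing estimate \eqref{stri2}, and the maximal function bound \eqref{li10} or \eqref{li11} according as $d\le 4$ or $d\ge 5$, and your argument simply spells out the same four-term verification against the definition of $\abss{\cdot}_{X_N}$ (including the observation, left implicit in the paper, that the $Y_N$ forcing term vanishes for a free solution).
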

\begin{proof}
	This follows from the Strichartz estimate \eqref{stri1}, the smoothing estimate \eqref{stri2} and \eqref{li10} if $d=3,4$ or \eqref{li11} if $d\geq 5$.
\end{proof}
\begin{prop}\label{prop1} For any function $F(x,t)$ such that $P_NF \in L^1_xL^2_t$, we have
	\begin{equation}\label{est1}
	\left\|\int_0^t e^{i(t-s)\Delta} P_NF(s) \ ds\right\|_{X_N} \lesssim \abss{P_NF}_{Y_N}.
	\end{equation}
\end{prop}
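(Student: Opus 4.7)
The plan is to bound $v(x,t) := \int_0^t e^{i(t-s)\Delta} P_N F(s)\, ds$ in each of the four summands defining $\abss{\cdot}_{X_N}$ separately. The final summand comes essentially for free: since $(i\partial_t+\Delta) v = iP_N F$, it contributes exactly $N^{-1/2}\abss{P_N F}_{Y_N}$ to $\abss{v}_{X_N}$, which is absorbed into $\abss{P_N F}_{Y_N}$ in the relevant regime $N\gtrsim 1$. For each of the remaining three terms I would decompose $P_N F = F_1 + F_2$ with $F_1\in L_x^1 L_t^2$ and $F_2\in L_t^1 L_x^2$ nearly realizing the infimum in the $Y_N$ norm, and estimate the contributions of $F_1$ and $F_2$ separately.

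For the energy norm $\abss{v}_{L_t^\infty L_x^2}$ the $F_2$ piece follows immediately from Minkowski's inequality and the $L^2$-unitarity of $e^{i(t-s)\Delta}$, giving the bound $\abss{F_2}_{L_t^1 L_x^2}$. For the $F_1$ piece, I would factor out $e^{it\Delta}$ and dualize in space against $g\in L_x^2$:
\[
\left\langle \int_0^t e^{-is\Delta} P_N F_1(s)\, ds,\ g\right\rangle_x \ =\ \int \chi_{[0,t]}(s)\,\langle F_1(s),\, e^{is\Delta} P_N g\rangle_x\, ds.
\]
Applying Cauchy-Schwarz between $L_x^1 L_t^2$ and $L_x^\infty L_t^2$, and using the local smoothing estimate \eqref{stri2} on the frequency-$N$ data $P_N g$ to obtain $\abss{e^{is\Delta} P_N g}_{L_x^\infty L_t^2}\lesssim N^{-\frac12}\abss{g}_{L_x^2}$, delivers the required bound $N^{-\frac12}\abss{F_1}_{L_x^1 L_t^2}$.

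The local smoothing norm $N^{\frac12}\abss{v}_{L_x^\infty L_t^2}$ and the maximal function norm $N^{-s_0}\abss{v}_{L_x^{d-1} L_t^\infty}$ are handled by a $TT^*$-type composition. Setting $Tu := e^{it\Delta} P_N u$, the estimates \eqref{stri2} and \eqref{li1} give $T:L_x^2\to L_x^\infty L_t^2$ with norm $\lesssim N^{-\frac12}$ and $T:L_x^2\to L_x^{d-1} L_t^\infty$ with norm $\lesssim N^{s_0}$. Consequently the untruncated kernel $\int_{\mathbb R} e^{i(t-s)\Delta} P_N F_1(s)\,ds = TT^* F_1$ maps $L_x^1 L_t^2$ into $L_x^\infty L_t^2$ with norm $\lesssim N^{-1}$ and into $L_x^{d-1} L_t^\infty$ with norm $\lesssim N^{s_0-\frac12}$; weighting by $N^{\frac12}$ and $N^{-s_0}$ respectively reduces each contribution to $\lesssim N^{-\frac12}\abss{F_1}_{L_x^1 L_t^2}$. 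For $F_2$, Minkowski in $s$ combined with the pointwise-in-$s$ linear estimates $\abss{e^{i(t-s)\Delta} P_N F_2(s)}_{L_x^\infty L_t^2}\lesssim N^{-\frac12}\abss{F_2(s)}_{L_x^2}$ and $\abss{e^{i(t-s)\Delta} P_N F_2(s)}_{L_x^{d-1} L_t^\infty}\lesssim N^{s_0}\abss{F_2(s)}_{L_x^2}$ yields the desired $\abss{F_2}_{L_t^1 L_x^2}$ bounds in both norms.

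The principal obstacle is passing from the clean untruncated $TT^*$ estimates to the retarded integral $\int_0^t$ actually appearing in $v$: the time cutoff $\chi_{[0,t]}$ inside the integral destroys the convolution structure that makes $TT^*$ transparent. To bridge this gap I would invoke the solution decomposition technique of Bejenaru-Ionescu-Kenig-Tataru \cite{BeIoKeTa} emphasized in the introduction, which organizes the retarded kernel into pieces each controllable by an unrestricted $TT^*$-style estimate together with harmless boundary contributions. In some of the component estimates (most notably the $L_t^\infty L_x^2$ bound above) the retardation can instead be absorbed directly by the duality argument against $L_x^2$ test functions, sidestepping the borderline $L_t^1\to L_t^\infty$ failure of Christ-Kiselev-type devices; for the remaining cases the \cite{BeIoKeTa} decomposition is the essential technical ingredient.
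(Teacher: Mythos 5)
Your outline matches the paper's proof in structure: the paper also splits off the $L_t^1L_x^2$ component of $Y_N$ via Minkowski plus $L^2$-unitarity, reduces to proving $\lesssim N^{-1/2}\abss{P_NF}_{L_x^1L_t^2}$, and identifies the Bejenaru--Ionescu--Kenig--Tataru decomposition as the mechanism for handling the retarded kernel. Your observation that the $N^{-1/2}\abss{(i\partial_t+\Delta)u}_{Y_N}$ term is free, and your duality argument for the $L_t^\infty L_x^2$ norm against $L_x^2$ test functions, are both correct, and the latter is a clean alternative to what the paper does (which bounds $\abss{h}_{L_t^\infty L_x^2}\leq\abss{h}_{L_x^2L_t^\infty}$ and handles the free-solution pieces by Strichartz).

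That said, the proposal stops at precisely the point where the real work begins. In the paper, essentially the entire content of \Cref{prop1} is \Cref{lem1}: after writing $\int_0^t e^{i(t-s)\Delta}P_NF\,ds=\int_{\mathbb R}w_y\,dy$, one must explicitly construct the decomposition
\[
w_0(x,t)=-e^{it\Delta}\mathcal Lv_0-(P_{<N/2^{50}}1_{x>0})e^{it\Delta}v_0+h(x,t),
\]
identify $\hat v_0(\xi)=\psi_N(\xi)\widehat F_0(-\xi^2)$ and $\mathcal Lv_0$ from the Fourier side of $\chi_{[0,\infty)}P_NK_0\ast_t F_0$, show by explicit computation that the remainder symbol $A(\xi,\tau)$ is bounded and vanishes on $\tau=-\xi^2$, and then estimate $\abss{A}_{L^2_{\xi,\tau}}$ by splitting near and away from the characteristic surface. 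Writing ``I would invoke the \cite{BeIoKeTa} decomposition'' does not substitute for this; it is not a black box one can simply cite, since the specific cancellations (e.g.\ that $\psi_N(\xi)-\psi_N(\sqrt{-\tau})\psi_{<N/2^{50}}(\xi-\sqrt{-\tau})\frac{\xi+\sqrt{-\tau}}{2\sqrt{-\tau}}$ vanishes at the singularity) are what make the $L_x^\infty L_t^2$ bound work where Christ--Kiselev genuinely fails. A small additional inaccuracy: you attribute the ``borderline Christ--Kiselev failure'' to the $L_t^\infty L_x^2$ bound, but there the time exponents go $2\to\infty$ and Christ--Kiselev would in fact apply; the truly borderline case is $L_x^\infty L_t^2$ ($2\to 2$), which is the component that actually forces the explicit decomposition.
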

\begin{proof}
	It follows from Minkowski inequality and \eqref{linear} that
	\begin{equation*}
	\begin{split}
	\left\|\int_0^t e^{i(t-s)\Delta} P_NF(s) \ ds\right\|_{X_N} &\leq \int_{\mathbb{R}}\abss{e^{i(t-s)\Delta}P_NF(s)}_{X_N} \ ds \\
	&\lesssim \int_{\mathbb{R}}\abss{P_NF(s)}_{L^2_x} \ ds \\
	&= \abss{P_NF}_{L^1_tL^2_x}.
	\end{split}
	\end{equation*}
	Therefore, it suffices to prove that
	\begin{equation}\label{est0}
	\left\|\int_0^t e^{i(t-s)\Delta} P_NF(s) \ ds\right\|_{X_N} \lesssim N^{-\frac{1}{2}}\abss{P_NF}_{L^1_xL^2_t}.
	\end{equation}
	Let $K_0$ be the fundamental solution of Schr{\"o}dinger equation i.e.
	\[K_0(x,t)=\mathcal{F}^{-1}(e^{-it\xi^2})=\frac{1}{\sqrt{4\pi i t}}e^{ix^2/4t}.\]
	Thus,
	\begin{equation}\label{inhom}
	\begin{split}\int_0^t e^{i(t-s)\Delta} P_NF(x,s) \ ds &= \int_0^t\int_{\mathbb{R}} P_N\left[K_0(x-y,t-s)F(y,s)\right] \ dyds \\
	&= \int_{\mathbb{R}}\int_0^t P_N\left[K_0(x-y,t-s)F(y,s)\right] \ dsdy \\
	&:= \int_{\mathbb{R}} w_y \ dy,
	\end{split}
	\end{equation}
	In order to proceed, we will make use of the following lemma.
	\begin{lemma}\label{lem1}
		For any $N\in 2^{\mathbb{Z}}$, the function $w_y$ defined in \eqref{inhom} satisfies the following estimate:
		\begin{equation}\label{est2}
		\abss{w_y}_{X_N} \lesssim N^{-\frac{1}{2}}\abss{F(y)}_{L_t^2}.
		\end{equation}
	\end{lemma}
	\noindent Continuing the proof of \Cref{prop1}, we see that the estimate \eqref{est0} follows immediately from \eqref{est2}. \end{proof}
\begin{proof}[Proof of \Cref{lem1}]
	By translation invariance, it suffices to assume that $y=0$. Denote $F_0(t):=F(0,t)$. To proceed, we use the following decomposition which was first introduced in \cite{BeIoKeTa} to deal with Scr{\"o}dinger maps.
	\begin{equation}\label{dec2}
	w_0(x,t) = -e^{it\Delta}\mathcal{L}v_0(x)-(P_{<N/2^{50}}1_{x>0})e^{it\Delta}v_0(x)+h(x,t),
	\end{equation}
	where $\mathcal{L}:L^2_x(\mathbb{R})\to L^2_x(\mathbb{R})$ is an operator and
	\begin{equation}\label{dec1}
	\abss{\mathcal{L}v_0}_{L_x^2}+\abss{v_0}_{L_x^2}+N^{-1}(\abss{\Delta h}_{L_{x,t}^2}+\abss{ h_t}_{L_{x,t}^2}) \lesssim N^{-\frac{1}{2}}\abss{F_0}_{L_t^2}.
	\end{equation}
	To prove the claim, first we rewrite the definition of $w_0$ as
	\begin{equation}\label{PN}
	\begin{split}
	w_0(x,t) &=\int_{\mathbb{R}} \chi_{[0,\infty)}(t-s)P_N[K_0(x,t-s)]F_0(s) \ ds \\
	& \ \ \ -e^{it\Delta}\int_{-\infty}^{0}P_N[K_0(x,-s)]F_0(s) \ ds \\
	&=(\chi_{[0,\infty)}P_NK_0)\ast_t F_0-e^{it\Delta}\int_{-\infty}^{0}P_NK_0(x,-s)F_0(s) \ ds ,
	\end{split}
	\end{equation}
	where $\ast_t$ is the time convolution. The space-time Fourier transform of the first term is equal to
	\begin{equation}\label{dec5}
	\frac{\psi_N(\xi)}{-\tau-\xi^2-i0}\widehat{F}_0(\tau),
	\end{equation}
	where $\widehat{F}_0$ is the time Fourier transform of $F_0$. We define 
	\begin{equation}\label{v0}
	\hat{v}_0(\xi) := \psi_N(\xi)\widehat{F}_0(-\xi^2).
	\end{equation}
	We see that $v_0$ is supported at frequency $\sim N$. By changing variables we obtain the following estimate,
	\begin{equation}\label{dec3}
	\abss{v_0}_{L^2_x} \lesssim N^{-\frac{1}{2}}\abss{F_0}_{L^2_t}.
	\end{equation}
	We apply the spatial Fourier transform to the second term 
	\begin{equation}\label{L}
	\begin{split}
	\int_{-\infty}^{0}\widehat{P_NK}_0(x,-s)F_0(s) \ ds &=\psi_{N}(\xi)\int_{-\infty}^{0}e^{is\xi^2}F_0(s) \ ds \\
	&= \psi_{N}(\xi)\mathcal{F}_t(\chi_{(0,\infty]} F_0) (-\xi^2) \\
	&:= \widehat{\mathcal{L}v_0}(\xi).
	\end{split}
	\end{equation}
	We see that $\mathcal{L}v_0$ is supported at frequency $\sim N$. It follows from a change of variables that
	\[\abss{\mathcal{L}v_0}_{L^2_x}\lesssim N^{-\frac{1}{2}}\abss{F_0}_{L^2_t}.\]
	Applying the Fourier transform to $e^{it\Delta}v_0$,
	\[\mathcal{F}(e^{it\Delta}v_0)=\psi_N(\xi)\widehat{F}_0(-\xi^2)\mathcal{F}_t (e^{-it\xi^2})=\psi_N(\xi)\widehat{F}_0(-\xi^2)\delta_{\tau+\xi^2}.\]
	Assume that $\xi > 0 $ and consider the distribution $\delta_{\tau+\xi^2}$. For any $\phi \in \mathcal{S}(\mathbb{R} \times \mathbb{R})$, by a change of variables
	\[\int_{0}^{\infty}\phi(\xi,-\xi^2) \ d\xi=\int_{-\infty}^{0}\frac{1}{2\sqrt{-\tau}}\phi(\sqrt{-\tau},\tau) \ d\tau. \]
	Thus, $1_{\xi>0}\delta_{\tau+\xi^2}=1_{\tau<0}\frac{1}{2\sqrt{-\tau}}\delta_{\xi-\sqrt{-\tau}}$. Therefore, the following computation holds.
	\begin{align*}
        \mathcal{F}\big\{(P_{<N/2^{50}}& 1_{x>0})e^{it\Delta}v_0\big\}(\xi,\tau) \\ &=(\psi_N(\xi)\widehat{F}_0(-\xi^2)\delta_{\tau+\xi^2})* \frac{\psi_{<N/2^{50}}(\xi)}{\xi+i0} \\
	&=\left(\frac{\psi_N(\xi)}{2\sqrt{-\tau}}\widehat{F}_0(-\xi^2)\delta_{\xi-\sqrt{-\tau}}\right)* \frac{\psi_{<N/2^{50}}(\xi)}{\xi+i0}  \\
	&=\frac{\psi_N(\sqrt{-\tau})\widehat{F}_0(\tau)}{2\sqrt{-\tau}}\frac{\psi_{<N/2^{50}}(\xi-\sqrt{-\tau})}{\xi-\sqrt{-\tau}+i0} \\
	&= \psi_N(\sqrt{-\tau})\psi_{<N/2^{50}}(\xi-\sqrt{-\tau})\widehat{F}_0(\tau)\frac{\xi+\sqrt{-\tau}}{2\sqrt{-\tau}}\frac{1}{\xi ^2+\tau+i0}. 
	\end{align*}
	With this and \eqref{dec5}, the space-time Fourier transform of the remainder term is given by
	\begin{align*}
	\hat{h}(\xi,\tau) &= \left(\psi_N(\xi)-\psi_N(\sqrt{-\tau})\psi_{<N/2^{50}}(\xi-\sqrt{-\tau})\frac{\xi+\sqrt{-\tau}}{2\sqrt{-\tau}}\right)\frac{\widehat{F}_0(\tau)}{-\xi^2-\tau-i0} \\
                      &:= A(\xi,\tau)\widehat{F}_0(\tau).\numberthis \label{hh}
	\end{align*}
	The term in the bracket is bounded, supported in $\{0<\xi\sim N\}$ and vanishes when $\xi =\sqrt{-\tau}$, canceling out the singularity. Since the same result holds for $\xi<0$, this implies that
	\begin{equation}\label{dec4}
	\abss{\Delta h}_{L^2_{x,t}}+\abss{\partial_t h}_{L^2_{x,t}} \sim \abss{(\xi^2+\abs{\tau})\hat{h}}_{L^2_{\xi,\tau}} \lesssim N^{\frac{1}{2}}\abss{\widehat{F}_0(\tau)}_{L^2_{\tau}}.
	\end{equation}
	The estimate \eqref{dec1} then follows from \eqref{dec3} and \eqref{dec4}. \\
	\\
	\textbf{Remark: }It is important to note that $v_0, Lv_0$ and $h$	are supported at frequency $\sim N$, since we will need this fact in any proof that employ the decomposition \eqref{dec2}.  \\
	\\
	\noindent We are now ready to prove \eqref{est2}. By Bernstein's inequality and direct $L^2$ integration on $A(\xi,\tau)$,
	\begin{equation*}
	\begin{split}
	\abss{h}_{L_x^{d-1}L_t^{\infty}} \leq \abss{\mathcal{F}_t h}_{L_x^{d-1}L_{\tau}^{1}} 
	&\lesssim \abss{\mathcal{F}_t h}_{L_{\tau}^{1}L^{d-1}_x}\\
	&\lesssim N^{\frac{d-3}{2(d-1)}}\abss{\mathcal{F}_t h}_{L_{\tau}^{1}L^{2}_x}	\\
	&= N^{\frac{d-3}{2(d-1)}}\abss{\hat{h}}_{L_{\tau}^{1}L^{2}_{\xi}} \\
	&\leq N^{\frac{d-3}{2(d-1)}}\abss{A(\xi,\tau)}_{L^2_{\tau,\xi}}\abss{\widehat{F}_0(\tau)}_{L^2_{\tau}},
	\end{split}
	\end{equation*}
	where $A(\xi,\tau)$ is defined as in \eqref{hh} when $\xi>0$,. We split the integral in $	\abss{A(\xi,\tau)}_{L^2_{\tau,\xi}}^2$ as
	\begin{equation*}
	\begin{split}
	\abss{A(\xi,\tau)}_{L^2_{\tau,\xi}}^2 &= \int_{\abs{\xi-\sqrt{-\tau}}<\frac{N}{2^{100}}} \abs{A(\xi,\tau)}^2\ d\xi d\tau \\
	& \ \ \ +\int_{\abs{\xi-\sqrt{-\tau}}\geq\frac{N}{2^{100}}} \abs{A(\xi,\tau)}^2\ d\xi d\tau \\
	&:= A_1 +A_2.
	\end{split}
	\end{equation*}
	Note that $\psi_N(\xi)=\psi_N(\sqrt{-\tau})+(\xi-\sqrt{-\tau})O(\frac{1}{N})$ as $\xi \to \sqrt{-\tau} $. If $\abs{\xi-\sqrt{-\tau}}<\frac{N}{2^{100}}$, then $\psi_{<N/2^{50}}(\xi-\sqrt{-\tau})=1$ and it follows that
	\begin{equation*}
	\begin{split}
	\psi_N(\xi)-\psi_N(\sqrt{-\tau})&\psi_{<N/2^{50}}(\xi-\sqrt{-\tau})\frac{\xi+\sqrt{-\tau}}{2\sqrt{-\tau}} \\ &=\frac{\psi_N(\sqrt{-\tau})(\sqrt{-\tau}-\xi)}{2\sqrt{-\tau}}+(\xi-\sqrt{-\tau})O\Big(\frac{1}{N}\Big).
	\end{split}
	\end{equation*}
	Since $A(\xi,\tau)$ is supported in the region $\xi\sim N$, we have that
	\begin{equation*}
	\begin{split}
	A_1 &\lesssim \int_{\tau\sim -N^2}\int_{\xi\sim N} \frac{1}{-2\tau(\xi+\sqrt{-\tau})^2 }+\frac{1}{N^2(\xi+\sqrt{-\tau})^2}\ d\xi d\tau \lesssim \frac{1}{N}.
	\end{split}
	\end{equation*}
	On the other hand, under the assumptions that, $\xi\sim N$ and $\abs{\xi-\sqrt{-\tau}}\geq\frac{N}{2^{100}}$, we have  $\abs{\xi^2+\tau}=\abs{(\xi+\sqrt{-\tau})(\xi-\sqrt{-\tau})}\gtrsim \frac{N^2}{2^{100}}$. Thus, by a change of variables $(\xi,\tau)\mapsto (\xi,\eta)$ where $\eta := \tau+\xi^2$, we have 
	\begin{equation*}
	\begin{split}
	A_2 &\leq \int_{-\infty}^{0}\int_{\abs{\xi-\sqrt{-\tau}}\geq\frac{N}{2^{100}}} \frac{\psi_{N}(\xi)}{(\xi^2+\tau)^2 } +\frac{\psi_{N}(\sqrt{-\tau})\psi_{<N/2^{50}}(\xi-\sqrt{-\tau})}{-4\tau (\xi+\sqrt{-\tau})^2}\ d\xi d\tau \\ &\lesssim \int_{\xi\sim N}\int_{\abs{\eta}\gtrsim \frac{N^2}{2^{100}}} \frac{1}{\eta^2} \ d\eta d\xi+\int_{\tau\sim -N^2}\int_{\xi\sim N}\frac{1}{-4\tau (\xi+\sqrt{-\tau})^2}\ d\xi d\tau \\
	&\lesssim \int_{\xi\sim N} \frac{1}{N^2}\ d\xi +\frac{1}{N}\\
	&\lesssim \frac{1}{N},
	\end{split}
	\end{equation*}
	and a similar result holds when $\xi<0$. From this, we can conclude that
	\begin{equation}\label{h2}
	\abss{h}_{L_x^{d-1}L_t^{\infty}}\lesssim N^{\frac{d-3}{2(d-1)}}\abss{A(\xi,\tau)}_{L^2_{\tau,\xi}}\abss{\widehat{F}_0(\tau)}_{L^2_{\tau}} \lesssim N^{\frac{d-3}{2(d-1)}-\frac{1}{2}}\abss{F(0)}_{L^2_{t}}.
	\end{equation}
	Similarly, we have the following,
	\begin{equation}\label{inf}
	\abss{h}_{L_{x,t}^{\infty}} \lesssim \abss{F(0)}_{L^2_{t}}.
	\end{equation}		
	In particular, for $d=3$ and $N\geq 1$, we have that
	\begin{equation}\label{h4}
	N^{-\frac{1}{2}}\abss{h}_{L_x^2L_t^{\infty}} \leq \abss{h}_{L_x^{2}L_t^{\infty}} \lesssim N^{-\frac{1}{2}}\abss{F(0)}_{L_t^2}.
	\end{equation}
	Similarly, by Sobolev's embedding,
	\begin{equation}\label{h5}
	N^{\frac{1}{2}}\abss{h}_{L_x^{\infty}L_t^2}\lesssim N^{\frac{1}{2}}\abss{h}_{L^2_tL^{\infty}_x} \lesssim  N^{-1}\abss{\Delta h}_{L^2_{x,t}}\lesssim N^{-\frac{1}{2}}\abss{F(0)}_{L^2_{t}}.
	\end{equation}
	where we used \eqref{dec1} in the last step. Lastly, it follows from \eqref{h2} that
	\begin{equation}\label{h6}
	\abss{h}_{L_t^{\infty}L_x^2} \leq \abss{h}_{L_x^2L_t^{\infty}} \lesssim N^{-\frac{1}{2}}\abss{F(0)}_{L^2_{t}}.
	\end{equation}
	Putting together \eqref{h2}, \eqref{h5} and \eqref{h6}, we are done with estimating $h$. Similar estimate for the term $1_{x>0}e^{it\Delta}v_0$ follows easily from Strichartz-type estimates \eqref{stri1}, \eqref{stri2} and \eqref{li1}.
\end{proof}
\noindent	
In the proof of \Cref{thmm1} in the next section, we will incorporate the low frequency projection $P_{\leq 1}u$ into the spaces $X^s$ and $Y^s$, which are restricted to the time interval $T=[-1,1]$, in order to obtain the local well-posedness. Therefore, we need an estimate analogous to \eqref{main} for functions supported at low frequencies, which can be obtained from the two following propositions:
\begin{prop} Let $T=[-1,1]$. For any function $u_0\in L^2(\mathbb{R})$, we have
	\begin{equation}\label{loc}
	\abss{P_{\leq 1}e^{it\Delta}u_0}_{X_1(\mathbb{R}\times T)} \lesssim \abss{P_{\leq 1}u_0}_{L_x^2}.
	\end{equation}
\end{prop}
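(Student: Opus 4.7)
The plan is to observe that, since $e^{it\Delta}u_0$ solves the homogeneous Schr\"odinger equation, the $Y_1$ piece in the norm $X_1$ (with $N=1$, so all weights equal one) drops out, leaving three quantities to control:
\[
\abss{P_{\leq 1}e^{it\Delta}u_0}_{L_t^{\infty}L_x^2},\quad \abss{P_{\leq 1}e^{it\Delta}u_0}_{L_x^{d-1}L_t^{\infty}},\quad \abss{P_{\leq 1}e^{it\Delta}u_0}_{L_x^{\infty}L_t^2},
\]
each on $\mathbb{R}\times T$. The guiding idea is that at low frequencies (scale $\lesssim 1$) one can trade spatial integrability freely thanks to Bernstein, and the finite time interval $T$ cheaply converts $L_t^\infty$ bounds into $L_t^2$ bounds.

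First I would dispatch $L_t^{\infty}L_x^2$ immediately from the $L^2$-unitarity of the Schr\"odinger group: $\abss{e^{it\Delta}P_{\leq 1}u_0}_{L^2_x}=\abss{P_{\leq 1}u_0}_{L^2_x}$ for every $t$. Next, for $L_x^{\infty}L_t^2(\mathbb{R}\times T)$, I would apply Minkowski's inequality ($L^2_t$ is inside a larger $L^\infty_x$, so $\|\cdot\|_{L^\infty_x L^2_t}\le \|\cdot\|_{L^2_t L^\infty_x}$), then Bernstein at frequency $\le 1$ to obtain $\|P_{\leq 1}e^{it\Delta}u_0\|_{L^\infty_x}\lesssim \|P_{\leq 1}e^{it\Delta}u_0\|_{L^2_x}$, and finally use the finiteness of $T$ together with $L^2$-conservation:
\[
\abss{P_{\leq 1}e^{it\Delta}u_0}_{L^2_t L^\infty_x(T\times\mathbb{R})}\lesssim \abss{P_{\leq 1}e^{it\Delta}u_0}_{L^2_tL^2_x(T\times\mathbb{R})}\lesssim |T|^{1/2}\abss{P_{\leq 1}u_0}_{L^2_x}.
\]

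The remaining $L^{d-1}_xL^\infty_t$ piece is the real work: Minkowski goes the wrong way here, so one cannot simply hide the time supremum inside a spatial $L^2$ norm. Instead I would invoke the maximal function estimate \eqref{li10} with $N=1$. Since the Fourier support of $P_{\leq 1}u_0$ lies in $\{|\xi|\le 4\}$ rather than exactly in $[0,1]$, I would split $P_{\leq 1}u_0$ into its positive- and negative-frequency halves via a smooth cutoff (each with support in a compact interval) and apply \eqref{li10} to each; the kernel computation in \Cref{lemma1} is insensitive to whether the compact frequency window is $[0,1]$ or $[-4,0]\cup[0,4]$, so the same bound $\lesssim \abss{P_{\leq 1}u_0}_{L^2_x}$ results. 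Equivalently, one could decompose dyadically as $P_{\le 1}=\sum_{j\le 0}P_{2^j}$ and sum \eqref{li10} after rescaling, with convergent geometric-series weights handled by Cauchy--Schwarz in $j$.

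The main obstacle is this last step: ensuring that the maximal function estimate, whose statement is tailored to either $N\in 2^{\mathbb{N}}$ or the single low-frequency window $[0,1]$, actually applies to a function supported in $\{|\xi|\le 4\}$ including both signs and arbitrarily small positive frequencies. The splitting/rescaling argument above bypasses this, and combining the three bounds yields \eqref{loc}.
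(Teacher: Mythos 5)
Your proposal is correct and follows essentially the same route as the paper: drop the vanishing $Y_1$ piece (since $e^{it\Delta}u_0$ is a free solution), dispatch the Strichartz/$L^\infty_t L^2_x$ components via \eqref{stri1}, bound $L^\infty_x L^2_t$ by Minkowski, then Bernstein (or, as the paper phrases it, Hausdorff--Young plus Cauchy--Schwarz on the compact frequency window), then finiteness of $T$, and invoke the maximal-function estimate \eqref{li10} with $N=1$ for the remaining $L^\gamma_x L^\infty_t$ piece.

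One caution about the ``equivalent'' alternative you mention, the dyadic decomposition $P_{\le 1}=\sum_{j\le 0}P_{2^j}$ with rescaling: for $\gamma=2,3$, \eqref{li10} is stated only for $N\in 2^{\mathbb{N}}$ together with the single low-frequency block $[0,1]$, and the $N^{1/\gamma}$ gain genuinely \emph{fails} for $N\ll 1$ on the time scale $[-1,1]$. Indeed, for $P_N u$ with $N$ small, $e^{it\Delta}$ is close to the identity for $|t|\le 1$, so one only gets $\|\chi_{[-1,1]}e^{it\Delta}P_Nu\|_{L^2_xL^\infty_t}\sim\|P_Nu\|_{L^2}$ without the $N^{1/2}$ factor (one can also see this from \eqref{li3}, where the $\Omega_1$ contribution to $\|K_1\|_{L^1_xL^\infty_t}$ is $\sim 1$, not $\sim N$, once $N\le 1$). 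Hence there is no geometric decay to sum. Your primary workaround — splitting $P_{\le 1}$ into a fixed finite number of pieces with compact frequency support and applying the $N=1$ estimate — is the sound one, and in fact is not strictly necessary: the kernel $K_1$ in \Cref{lemma1} is built with $\psi(\xi/4N)$, supported in $|\xi|\le 16N$, so the $N=1$ case of the lemma already covers the whole window $|\xi|\le 4$ of $P_{\le 1}$; the hypothesis ``$\supp(|\hat u|)\subseteq[0,1]$'' in the statement of \Cref{thm2} is more conservative than what its proof delivers, and the paper's direct citation of \eqref{li10} is justified on that basis.
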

\begin{proof}	
	In view of Strichartz's estimate \eqref{stri1} with $p=2$ and $q=\infty$ and \eqref{li10}, it suffices to prove that
	\[	\abss{P_{\leq 1}e^{it\Delta}u_0}_{L_x^{\infty}L_t^{2}{(\mathbb{R}\times T)}} \lesssim \abss{P_{\leq 1}u_0}_{L_x^2}.\]
	Using the fact that $\widehat{P_{\leq 1}u}_0(\xi,t)$ is compactly supported in $\xi$ and Plancherel theorem, we have
	\begin{equation*}
	\begin{split}
	\abss{P_{\leq 1}e^{it\Delta}u_0}_{L^{\infty}_xL^2_t(\mathbb{R}\times T)} & \leq \abss{P_{\leq 1}e^{it\Delta}u_0}_{L^2_tL^{\infty}_x(T\times\mathbb{R})} \leq \abss{\psi(\xi) \hat{u}_0}_{L^2_tL^{1}_{\xi}(T\times\mathbb{R})} \\	&\leq  \abss{\psi(\xi) \hat{u}_0}_{L^{\infty}_tL^{2}_{\xi}(T\times\mathbb{R})} \ \ \ \ \ = \abss{P_{\leq 1}u_0}_{L_x^2}.
	\end{split}
	\end{equation*}
\end{proof}
\begin{prop}\label{propp1} Let $T=[-1,1]$. For any function $F(x,t)$ such that $P_{\leq 1}F \in Y_1$, we have
	\begin{equation}\label{estt1}
	\Big\|\int_{0}^{t}e^{i(t-s)\Delta}P_{\leq 1}F(x,s) \ ds\Big\|_{X_1(\mathbb{R}\times T)} \lesssim \abss{P_{\leq 1}F}_{Y_1(\mathbb{R}\times T)}.
	\end{equation}
\end{prop}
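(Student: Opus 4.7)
The strategy mirrors that of \Cref{prop1}. Using the definition of the $Y_1$ norm I would decompose $P_{\leq 1}F = F_1 + F_2$ so that $\abss{F_1}_{L^1_xL^2_t}+\abss{F_2}_{L^1_tL^2_x}$ is comparable to $\abss{P_{\leq 1}F}_{Y_1}$. The contribution of $F_2$ is immediate from Minkowski's inequality and the low-frequency linear estimate \eqref{loc}:
\begin{equation*}
\Big\|\int_0^t e^{i(t-s)\Delta}P_{\leq 1}F_2(s)\,ds\Big\|_{X_1(\mathbb{R}\times T)}
\lesssim \int_{-1}^{1}\abss{P_{\leq 1}F_2(\cdot,s)}_{L^2_x}\,ds = \abss{P_{\leq 1}F_2}_{L^1_tL^2_x}.
\end{equation*}

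For the $F_1$ piece I would imitate the proof of \Cref{lem1}. Fubini and translation invariance reduce the problem to the pointwise-in-$y$ bound $\abss{w_y}_{X_1(\mathbb{R}\times T)} \lesssim \abss{F_1(y,\cdot)}_{L^2_t}$, where
\begin{equation*}
w_y(x,t) := \int_0^t P_{\leq 1}\bigl[K_0(x-y,t-s)\bigr]F_1(y,s)\,ds;
\end{equation*}
a Minkowski integration in $y$ then yields the conclusion. Fixing $y=0$ and writing $F_0(t)=F_1(0,t)$, I would decompose $w_0$ in the form \eqref{dec2}, with the low-frequency cutoff $\psi_{\leq 1}$ replacing the dyadic multiplier $\psi_N$. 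Define $\hat v_0(\xi):=\psi_{\leq 1}(\xi)\widehat{F_0}(-\xi^2)$ and the analog of $\mathcal{L}v_0$ as in \eqref{L}. A change of variables gives $\abss{v_0}_{L^2_x}+\abss{\mathcal{L}v_0}_{L^2_x}\lesssim \abss{F_0}_{L^2_t}$ (no $N^{-1/2}$ gain, but none is needed at $N=1$). The remainder $h$ then has Fourier transform $A(\xi,\tau)\widehat{F_0}(\tau)$, with $A$ supported in $\{|\xi|\lesssim 1\}$ and vanishing on the parabola $\{\xi^2+\tau=0\}$.

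The main obstacle is confirming that $\abss{(\abs{\xi}^2+\abs{\tau})\hat h}_{L^2_{\xi,\tau}}\lesssim \abss{F_0}_{L^2_t}$; by the structure of $A$ this reduces to $\abss{A}_{L^2_{\xi,\tau}}\lesssim 1$. I would verify this exactly as in \Cref{lem1} by splitting into the regions $\abs{\xi-\sqrt{-\tau}}<2^{-100}$ and $\abs{\xi-\sqrt{-\tau}}\geq 2^{-100}$ and using the Taylor expansion $\psi_{\leq 1}(\xi)=\psi_{\leq 1}(\sqrt{-\tau})+(\xi-\sqrt{-\tau})O(1)$ to absorb the singularity of $-\xi^2-\tau-i0$ on the parabola; the bounded frequency support makes every integral converge, and the finite time interval $T=[-1,1]$ ensures the corresponding Sobolev and Bernstein inequalities apply to $h$ without any log loss. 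Once $h$ is controlled in $L^2_{x,t}$ with two derivatives, its $L^\infty_tL^2_x$, $L^{d-1}_xL^\infty_t$ and $L^\infty_xL^2_t$ norms are estimated exactly as in \eqref{h2}--\eqref{h6} with $N=1$, while the contributions of $e^{it\Delta}\mathcal{L}v_0$ and $(P_{<1/2^{50}}1_{x>0})e^{it\Delta}v_0$ are handled by \eqref{stri1}, \eqref{stri2} and \eqref{loc}. Finally the component $N^{-1/2}\abss{(i\partial_t+\Delta)u}_{Y_1}=\abss{P_{\leq 1}F}_{Y_1}$ of the $X_1$ norm is trivially dominated by the right-hand side of \eqref{estt1}, and summing all pieces completes the proof.
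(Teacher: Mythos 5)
Your proposal is based on extending the BIKT decomposition \eqref{dec2}--\eqref{dec1} to the low-frequency block $P_{\leq 1}$, but this does not work and is not what the paper does. The obstruction is the singularity of the Jacobian at the vertex of the parabola $\tau=-\xi^2$. In your decomposition $\hat v_0(\xi)=\psi_{\leq 1}(\xi)\widehat{F_0}(-\xi^2)$, the change of variables $\tau=-\xi^2$ gives
\begin{equation*}
\abss{v_0}_{L^2_x}^2=\int|\psi_{\leq 1}(\xi)|^2\,|\widehat{F_0}(-\xi^2)|^2\,d\xi
=\int_{-16}^{0}|\psi_{\leq 1}(\sqrt{-\tau})|^2\,|\widehat{F_0}(\tau)|^2\,\frac{d\tau}{2\sqrt{-\tau}},
\end{equation*}
and the weight $1/\sqrt{-\tau}$ is unbounded near $\tau=0$. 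When the support is $\xi\sim N$ with $N\geq 1$, this weight is $\sim N^{-1}$, which is exactly how \eqref{dec3} acquires the factor $N^{-1/2}$; but on $\operatorname{supp}\psi_{\leq 1}$ the frequency $\xi$ reaches $0$ and the claim $\abss{v_0}_{L^2_x}\lesssim\abss{F_0}_{L^2_t}$ is simply false. Take for instance $\widehat{F_0}(\tau)=|\tau|^{-1/4}\,1_{(-1,0)}(\tau)\in L^2_\tau$: then $\abss{v_0}_{L^2_x}^2\gtrsim\int_0^1\xi^{-1}\,d\xi=\infty$. The same degeneracy contaminates the terms $\mathcal{L}v_0$ and $\hat h$, whose formulas contain the factor $\tfrac{\xi+\sqrt{-\tau}}{2\sqrt{-\tau}}$ as well as $\tfrac{1}{\xi^2+\tau+i0}$; the cancellation you invoke relies on $\xi\sim N$ being bounded away from zero, and neither the $A_1$ nor the $A_2$ computation of \Cref{lem1} survives when $\xi$, $\sqrt{-\tau}$ approach $0$.

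The paper's actual proof of this proposition is much more elementary and purely low-frequency: it reuses only the kernel bound of \Cref{lemma1} with $N=1$, writes the Duhamel integral (for $t\in[0,1]$) as a full space-time convolution $K\star\chi_{[0,1)}P_{\leq 1}F$, and applies Young's inequality to obtain the $L^2_xL^\infty_t$ bound. Everything else is then squeezed out of two structural facts: the compact Fourier support of $P_{\leq 1}F$ (so Bernstein lets one move between $L^p_x$ exponents) and the finite time interval $T=[-1,1]$ (so $L^\infty_t\hookrightarrow L^2_t\hookrightarrow L^1_t$), together with Minkowski's inequality to swap $L^p_xL^q_t$ and $L^q_tL^p_x$. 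No decomposition of the Duhamel term is introduced at all. If you want a proof in the spirit of your plan you would need a substitute for $v_0$, $\mathcal{L}v_0$, $h$ adapted to the region near the vertex of the parabola, and there is no such replacement in the paper; the intended argument is the convolution-plus-Bernstein one.
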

\begin{proof}
	As in the proof of \Cref{prop1}, it follows from Minkowski inequality that
	\[		\left\|\int_0^t e^{i(t-s)\Delta} P_{\leq 1}F(s) \ ds\right\|_{X_1(\mathbb{R}\times T)} \lesssim \abss{P_{\leq 1}F}_{L^1_tL^2_x(T\times \mathbb{R})}.
	\]
	Thus, it suffices to prove that
	\begin{equation*}
	\left\|\int_0^t e^{i(t-s)\Delta} P_{\leq 1}F(s) \ ds\right\|_{X_{1}(\mathbb{R}\times T)} \lesssim \abss{P_{\leq 1}F}_{L^1_xL^2_t(\mathbb{R}\times T)}.
	\end{equation*}
	Note that for $t\in [0,1]$, we can rewrite
	\begin{equation*}
	\begin{split}
	\int_0^t e^{i(t-s)\Delta} P_{\leq 1}F(x,s) \ ds &=\int_{\mathbb{R}}\chi_{[0,1)}(t-s)\chi_{[0,1)}(s) e^{i(t-s)\Delta} P_{\leq 1}F(x,s) \ ds \\
	&:= K(x,t)\star  \chi_{[0,1)}(t)P_{\leq 1}F(x,t)
	\end{split}
	\end{equation*}
	where $\star$ is the space-time convolution and
	\[K(x,t)=\int_{\mathbb{R}}e^{-it\xi^2+ix\xi}\chi_{[0,1)}(t)\psi\left(\frac{\xi}{N}\right) \ d\xi, \]
	which obeys the estimate \eqref{k} with $N=1$. Hence, by Young's inequality
	\[\Big\|\chi_{[0,1]}(t) \left[K(x,t)\star  \chi_{[0,1)}(t)P_{\leq 1}F(x,t)\right]\Big\|_{L_x^{2}L_t^{\infty}} \lesssim \abss{\chi_{[0,1]}(t)P_{\leq 1}F}_{L_x^{2}L_t^1}. \]
	We use the finite time restriction and apply Bernstein's and Minkowski's inequality.
	\begin{equation*}\label{loc1}
	\begin{split}
	\abss{\chi_{[0,1]}(t)P_{\leq 1}F}_{L_x^{2}L_t^1}&\lesssim\abss{\chi_{[0,1]}(t)P_{\leq 1}F}_{L_{x,t}^{2}} \\
	&\lesssim \abss{\chi_{[-1,1]}(t)P_{\leq 1}F}_{L_t^2L_x^{1}} \\
	&\leq  \abss{\chi_{[-1,1]}(t)P_{\leq 1}F}_{L_x^{1}L_t^2}.
	\end{split}
	\end{equation*}
	Since similar proof applies for the time interval $[-1,0]$, we obtain
	\[\Big\|\int_{0}^{t}e^{i(t-s)\Delta}P_{\leq 1}F(s) \ ds\Big\|_{L_x^{2}L_t^{\infty}(\mathbb{R}\times T)} \lesssim   \abss{P_{\leq 1}F}_{L_x^{1}L_t^2(\mathbb{R}\times T)}. \]
	This estimate has the following two consequences. First, from Minkowski's inequality, we have
	\begin{equation*}
	\begin{split}
	\Big\|\int_{0}^{t}e^{i(t-s)\Delta}P_{\leq 1}F(s) \ ds\Big\|_{L_t^{\infty}L_x^{2}(T\times \mathbb{R})}  &\leq 	\Big\|\int_{0}^{t}e^{i(t-s)\Delta}P_{\leq 1}F(s) \ ds\Big\|_{L_x^{2}L_t^{\infty}(\mathbb{R}\times T)} \\
	& 	\lesssim \abss{P_{\leq 1}F}_{L_x^{1}L_t^2(\mathbb{R}\times T)}.
	\end{split}
	\end{equation*}
	Secondly, it follows from Minkowski's inequality, Bernstein's inequality and the finite time restriction that
	\begin{equation*}
	\begin{split}
	\Big\|\int_{0}^{t}e^{i(t-s)\Delta}P_{\leq 1}F(s) \ ds\Big\|_{L_x^{\infty}L_t^{2}(\mathbb{R}\times T)}  &\leq 	\Big\|\int_{0}^{t}e^{i(t-s)\Delta}P_{\leq 1}F(s) \ ds\Big\|_{L_t^{2}L_x^{\infty}(T\times \mathbb{R})} \\
	& 	\lesssim \Big\|\int_{0}^{t}e^{i(t-s)\Delta}P_{\leq 1}F(s) \ ds\Big\|_{L^2_{x,t}(\mathbb{R}\times T)} \\
	& 	\lesssim \Big\|\int_{0}^{t}e^{i(t-s)\Delta}P_{\leq 1}F(s) \ ds\Big\|_{L^2_{x}L^{\infty}_t(\mathbb{R}\times T)} \\
	& 	\lesssim \abss{P_{\leq 1}F}_{L_x^{1}L_t^2(\mathbb{R}\times T)}.
	\end{split}
	\end{equation*}
	This concludes the proof of \eqref{estt1}.
\end{proof}
\noindent
The essential part of the contraction argument is a multilinear estimate: an estimate of the form $\abss{\partial_x u_1\prod_{i=2}^d u_i}_{Y^s}\lesssim \prod_{i=1}^d\abss{u_i}_{X^s}$. One of the main tools that we will use to prove this is the following Bilinear Strichartz estimate for the $X^s$ space.
\begin{theorem}\label{bithm}
	Let $N \gg M$ and suppose that $u$ and $v$ are supported at frequency $N$ and $M$, respectively. Then, we have
	\begin{equation}\label{bi2}	\abss{uv}_{L^{2}_{x,t}} \lesssim N^{-\frac{1}{2}}\abss{u}_{X_N}\abss{v}_{X_M}. \end{equation}
\end{theorem}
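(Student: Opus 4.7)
The plan is to reduce \eqref{bi2} to the classical bilinear Strichartz estimate \eqref{bil} by writing $u$ and $v$ as Duhamel superpositions of free Schr{\"o}dinger flows. Setting $F_u := (i\partial_t+\Delta)u$, Duhamel's formula gives
\[ u(t) = e^{it\Delta}u(0) - i\int_0^t e^{i(t-s)\Delta}F_u(s)\,ds, \]
and analogously for $v$ with forcing $F_v$. Since the $X$-norm controls $\abss{F_u}_{Y_N}$ and $\abss{F_v}_{Y_M}$, the product $uv$ expands into four pieces, one for each (homog.)/(inhomog.) choice of factor. Because the frequency supports sit at $\sim N$ and $\sim M\ll N$, the separation parameter in \eqref{bil} is $\alpha\sim N$ at every step.

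I would first dispatch the homogeneous-homogeneous piece by a direct application of \eqref{bil}, yielding $N^{-1/2}\abss{u(0)}_{L^2}\abss{v(0)}_{L^2}$, which is absorbed by the $L^\infty_tL^2_x$ component of each $X$-norm. For each of the three remaining pieces involving a Duhamel factor, I would split the forcing as $F=F_1+F_2$ realizing the $Y_N$- or $Y_M$-infimum, with $F_1\in L^1_xL^2_t$ and $F_2\in L^1_tL^2_x$. The $F_2$ contribution is the easy one: Minkowski in the $s$-variable converts its Duhamel integral into a superposition of free evolutions, and \eqref{bil} applied slice by slice delivers the required gain $N^{-1/2}$ paired against $\abss{F_2}_{L^1_tL^2_x}$.

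The hard part is the $F_1\in L^1_xL^2_t$ contribution. Naive Minkowski in $x$ leaves the expression $\int_\mathbb{R}\abss{w_y\cdot g}_{L^2_{x,t}}\,dy$ with $w_y(x,t)=\int_0^t e^{i(t-s)\Delta}[K_0(x-y,t-s)F_1(y,s)]\,ds$ and $g$ the other factor, and \eqref{bil} cannot be applied to $w_y$ in that raw form. I would handle this by invoking the decomposition \eqref{dec2}-\eqref{dec1} established in the previous subsection, which splits $w_y$ into a term $-e^{it\Delta}\mathcal{L}v_0$, a term of the form $(P_{<\cdot/2^{50}}1_{x>0})e^{it\Delta}v_0$ in which the multiplier has small Fourier support, and a remainder $h$ localized at the forcing's frequency scale $\lambda\in\{N,M\}$ with $\abss{\Delta h}_{L^2_{x,t}}+\abss{h_t}_{L^2_{x,t}}\lesssim \lambda^{1/2}\abss{F_1(y)}_{L^2_t}$. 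For the two free-evolution pieces I apply \eqref{bil} directly against $g$: the small-Fourier-support multiplier leaves the $\sim N$ separation intact. For $hg$ I use H{\"o}lder combined with the Bernstein inequality \eqref{p1} and the bounds \eqref{h2}-\eqref{h6} on $h$, pairing $\abss{h}_{L^\infty_tL^2_x}$ or $\abss{h}_{L^2_tL^\infty_x}$ against the smoothing or maximal-function components of $g$'s $X$-norm to extract the $N^{-1/2}$ gain. Integrating in $y$ and invoking the infimum definition of $Y_N$ closes the estimate.

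The main technical obstacle will be the $F_1$ piece, which requires the BeIoKeTa-style decomposition \eqref{dec2}-\eqref{dec1} and careful tracking of frequency supports across the various multiplicative factors; every other step is a routine combination of Minkowski, H{\"o}lder, and Bernstein inequalities with repeated applications of \eqref{bil}.
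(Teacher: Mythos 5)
Your proposal follows essentially the same route as the paper: Duhamel expansion, splitting the forcing according to the $Y$-norm infimum, applying the free-solution bilinear estimate \eqref{bil} slice-by-slice for the $L^1_tL^2_x$ pieces, and invoking the BeIoKeTa decomposition \eqref{dec2} for the $L^1_xL^2_t$ pieces with H\"older/Bernstein pairings for the remainder $h$. The only point to flag is that the term $(P_{<\cdot/2^{50}}1_{x>0})e^{it\Delta}v_0$ is not a free evolution, so \eqref{bil} cannot be applied ``directly against $g$''; the paper handles it by writing the multiplier as a convolution and pulling the translated indicator through the $L^2_{x,t}$ norm, reducing to a genuine free-solution product---your heuristic about Fourier supports is right in spirit but needs that pull-out step to be rigorous.
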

\begin{proof}
	Let $F_1(x,t)=(i\partial_t+\Delta)u(x,t)$ and $F_2(x,t)=(i\partial_t+\Delta)v(x,t)$. We will prove that
	\begin{align}
	\abss{uv}_{L^2_{x,t}} &\lesssim N^{-\frac{1}{2}}\left(\abss{u(0)}_{L^2_x}+\abss{F_1}_{L^1_tL^2_x}\right)\left(\abss{v(0)}_{L^2_x}+\abss{F_2}_{L^1_tL^2_x}\right) \label{part1} \\
	\abss{uv}_{L^2_{x,t}}  &\lesssim N^{-\frac{1}{2}}\left(\abss{u(0)}_{L^2_x}+N^{-\frac{1}{2}}\abss{F_1}_{L^1_xL^2_t}\right)\left(\abss{v(0)}_{L^2_x}+M^{-\frac{1}{2}}\abss{F_2}_{L^1_xL^2_t}\right) \label{part2} \\
	\abss{uv}_{L^2_{x,t}}  &\lesssim N^{-\frac{1}{2}}\left(\abss{u(0)}_{L^2_x}+N^{-\frac{1}{2}}\abss{F_1}_{L^1_xL^2_t}\right)\left(\abss{v(0)}_{L^2_x}+\abss{F_2}_{L^1_tL^2_x}\right). \label{a1} \\
	\abss{uv}_{L^2_{x,t}} &\lesssim N^{-\frac{1}{2}}\left(\abss{u(0)}_{L^2_x}+\abss{F_1}_{L^1_tL^2_x}\right)\left(\abss{v(0)}_{L^2_x}+M^{-\frac{1}{2}}\abss{F_2}_{L^1_xL^2_t}\right). \label{a2}
	\end{align}
	To achieve \eqref{part1}, we consider the expansion of $u\bar{v}$ after using the Duhamel formula on $u$ and $v$. 
	\begin{equation*}
	\begin{split}
	u(x,t)&=e^{it\Delta}u(0)-i\int_0^t e^{i(t-s)\Delta} F_1(s) \ ds \\
	v(x,t)&=e^{it\Delta}v(0)-i\int_0^t e^{i(t-s)\Delta} F_2(s) \ ds,
	\end{split}
	\end{equation*}
	It follows from the bilinear estimate for free solutions \eqref{bil} that
	\begin{align*}
	\abss{e^{it\Delta}u(0)e^{it\Delta}v(0)}_{L^2_{x,t}} &\lesssim N^{-\frac{1}{2}}\abss{u(0)}_{L^2_x}\abss{v(0)}_{L^2_x} \\
	\intertext{By the Minkowski inequality, we have that}
	\abss{\int_0^t e^{i(t-s)\Delta}F_1(s)e^{it\Delta}v(0) \ ds}_{L^2_{x,t}} &\lesssim N^{-\frac{1}{2}}\int_{\mathbb{R}} \abss{F_1(s)}_{L^2_x}\abss{v(0)}_{L^2_x} \ ds \\
	&= N^{-\frac{1}{2}} \abss{F_1}_{L^1_tL^2_x}\abss{v(0)}_{L^2_x} .
	\intertext{Similarly, }
	\abss{\int_0^t e^{it\Delta}u(0)e^{i(t-s)\Delta}F_2(s) \ ds}_{L^2_{x,t}} &\lesssim N^{-\frac{1}{2}} \abss{u(0)}_{L^2_x}\abss{F_2}_{L^1_tL^2_x} .
	\end{align*}
	With the same proof, we can estimate the last term in the product.
	\begin{equation*}
	\begin{split}
	\abss{\int_0^t \int_0^t e^{i(t-s)\Delta}F_1(s)&e^{i(t-s)\Delta}F_2(\tilde{s}) \ dsd\tilde{s}}_{L^2_{x,t}} \\
	&\lesssim	N^{-\frac{1}{2}} \abss{F_1}_{L^1_tL^2_x}\abss{F_2}_{L^1_tL^2_x} ,
	\end{split}
	\end{equation*}
	and \eqref{part1} follows. \\
	To prove \eqref{part2}, we recall \eqref{dec2} which allows us to decompose $u$ and $v$ as follows
	\begin{align}
	u(x,t)&=e^{it\Delta}u(0)-\int_{\mathbb{R}} e^{it\Delta}\mathcal{L}u_y+(P_{N/2^{50}}1_{x>0})e^{it\Delta}u_y-h_{1,y}(x,t) \ dy \label{de1} \\
	v(x,t)&=e^{it\Delta}v(0)-\int_{\mathbb{R}} e^{it\Delta}\mathcal{L}v_{y'}+(P_{M/2^{50}}1_{x>0})e^{it\Delta}v_{y'}-h_{2,y'}(x,t) \ dy' ,	\label{d2}
	\end{align}
	where $\mathcal{L}:L^2_x\to L^2_x$ is a bounded operator and $u_y,\mathcal{L}u_y$ and $h_{1,y}$ are defined similarly to \eqref{hh}, \eqref{L} and \eqref{hh}, respectively. From the remark following \eqref{dec4}, we see that these functions are supported at frequency $\sim N$. Similar $v_{y'},\mathcal{L}v_{y'},h_{2,y'}$  Moreover, we have
	\begin{equation}\label{dd22}
	\begin{split}
	\abss{\mathcal{L}u_y}_{L_x^2}+\abss{u_y}_{L_x^2}+\frac{1}{N}(\abss{\Delta h_{y}}_{L_{x,t}^2}+\abss{ \partial_t h_{y}}_{L_{x,t}^2}) &\lesssim \frac{1}{N^{\frac{1}{2}}}\abss{F_1(y,t)}_{L_t^2}.
	\end{split}
	\end{equation}
	Similar conclusions hold for $v_{y'},\mathcal{L}v_{y'}$ and $h_{2,y'}$ at frequency $\sim M$ with corresponding nonlinearity $F_2(y',t)$. Consider each term in the product $uv$. Let $\psi_{N/2^{50}}$ be the function defined by $P_{N/2^{50}}f:= \psi_{N/2^{50}}\ast f$. Observe that for any $G\in L^2$, we have that
	\begin{align*}
	\abss{(P_{N/2^{50}}&1_{x>0})e^{it\Delta}u_y G(x)}_{L^2_{x,t}} \\
	&= \abss{(\psi_{N/2^{50}}\ast 1_{x>0})e^{it\Delta}u_yG(x)}_{L^2_{x,t}} \\
	&\leq \int \abss{1_{x-z>0}e^{it\Delta}u_y(x)G(x)}_{L^2_{x,t}} \abs{\psi_{N/2^{50}}(z)} \ dz \\
	&\leq  \int \abss{e^{it\Delta}u_y(x)G(x)}_{L^2_{x,t}}\abs{ \psi_{N/2^{50}}(z)} \ dz \\
	&\lesssim \abss{e^{it\Delta}u_yG}_{L^2_{x,t}}.
	\end{align*}
	With this, we can take care of all the terms involving $P_{N/2^{50}}1_{x>0}$ in the expansion of $uv$. For any $A,B\in L^2$, we have 
	\begin{align*}
	\abss{(P_{N/2^{50}}1_{x>0})e^{it\Delta}u_y e^{it\Delta}B}_{L^2_{x,t}} &\lesssim \abss{e^{it\Delta}u_ye^{it\Delta}B}_{L^2_{x,t}} \\
	\abss{(P_{N/2^{50}}1_{x>0})e^{it\Delta}u_y h_{2,y'}}_{L^2_{x,t}} &\lesssim \abss{e^{it\Delta}u_yh_{2,y'}}_{L^2_{x,t}}. \\
	\intertext{Similarly,}
	\abss{e^{it\Delta}A(P_{N/2^{50}}1_{x>0})e^{it\Delta}v_{y'} }_{L^2_{x,t}} &\lesssim \abss{e^{it\Delta}Ae^{it\Delta}v_{y'}}_{L^2_{x,t}} \\
	\abss{h_{1,y}(P_{N/2^{50}}1_{x>0})e^{it\Delta}v_{y'} }_{L^2_{x,t}} &\lesssim \abss{h_{1,y}e^{it\Delta}v_{y'}}_{L^2_{x,t}},			
	\end{align*}
	and lastly,
	\begin{align*}
	& \ \  \ \ \Big\|\left[(P_{N/2^{50}}1_{x>0})e^{it\Delta}u_y\right] \left[(P_{N/2^{50}}1_{x>0})e^{it\Delta}v_{y'}\right] \Big\|_{L^2_{x,t}} \\
	&	\lesssim \Big\|e^{it\Delta}u_y\left[(P_{N/2^{50}}1_{x>0})e^{it\Delta}v_{y'}\right]\Big\|_{L^2_{x,t}} \\
	& \lesssim \abss{e^{it\Delta}u_ye^{it\Delta}v_{y'}}_{L^2_{x,t}}.
	\end{align*}
	Therefore, we only have to worry about the terms of the forms $e^{it\Delta}Ae^{it\Delta}B$, $e^{it\Delta}Ah_{2,y'}$, $h_{1,y}e^{it\Delta}B$ and $h_{1,y}h_{2,y'}$. Note that any choice of $A$ that is not $u(0)$ is an integral with respect to $y$. The same holds for $B$. By the bilinear Strichartz estimate \eqref{bil}, one obtains
	\begin{equation}
	\begin{split}
	\abss{e^{it\Delta}Ae^{it\Delta}B}_{L^2_{x,t}} \lesssim N^{-\frac{1}{2}}\abss{A}_{L_x^2}\abss{B}_{L_x^2}.				\end{split}
	\end{equation}
	We get the desired bound by observing that either we have $\abss{A}_{L_x^2}=\abss{u(0)}_{L_x^2} $ or $\abss{A}_{L_x^2}\lesssim\int_{\mathbb{R}}\abss{u_y}_{L_x^2} \ dy \lesssim  N^{-\frac{1}{2}}\abss{F_1}_{L_x^1L_t^2}$ from \eqref{dd22}. It remains to estimate the terms that involve $h_{1,y}$ and $h_{2,y}$. By H{\"o}lder and Bernstein inequalities, \eqref{stri2} and \eqref{h4}, We have that
	\begin{equation}\label{ext0}
	\begin{split}
	\abss{e^{it\Delta}Ah_{2,y'}}_{L^2_{x,t}} &\lesssim \abss{e^{it\Delta}A}_{L^{\infty}_{x}L^{2}_{t}}\abss{h_{2,y'}}_{L^{2}_{x}L^{\infty}_{t}} \\
	& \lesssim N^{-\frac{1}{2}}M^{-\frac{1}{2}}\abss{A}_{L_x^2}\abss{F_2(y')}_{L^2_{t}}. 
	\end{split}
	\end{equation}
	By taking $\int_{\mathbb{R}} \cdot \ dy'$ when $A=u(0)$ and $\int_{\mathbb{R}}\int_{\mathbb{R}}\cdot \ dydy'$ when $A=\mathcal{L}u_y$ or $A=u_y$ on both sides of the inequality and applying \eqref{dec1}, we get the desired bound. On the other hand, we get the estimate for $\abss{h_{1,y}e^{it\Delta}B}_{L^2_{x,t}}$ by observing that from \eqref{dec1},  we have $\abss{\Delta h_{1,y}}_{L^2_{x,t}}\lesssim N^{-\frac{3}{2}}\abss{F_1}_{L^2_t}$. Hence,
	\begin{equation}\label{ext}
	\begin{split}
	\abss{h_{1,y}e^{it\Delta}B}_{L^2_{x,t}} &\lesssim \abss{h_{1,y}}_{L^{2}_{x,t}}\abss{e^{it\Delta}B}_{L^{\infty}_{x,t}} \\
	& \lesssim N^{-\frac{3}{2}}M^{\frac{1}{2}}\abss{F_1}_{L_x^2}\abss{B}_{L^{\infty}_tL^2_{x}} \\
	&\leq  N^{-\frac{1}{2}}M^{-\frac{1}{2}}\abss{F_1}_{L_x^2}\abss{B}_{L^{\infty}_tL^2_{x}}.
	\end{split}
	\end{equation}
	\\
	Lastly, we use \eqref{h4} and \eqref{h5} to estimate the remaining term
	\begin{equation}
	\begin{split}
	\abss{h_{1,y}h_{2,y'}}_{L^2_{x,t}} &\leq \abss{h_{1,y}}_{L^{\infty}_xL^2_{t}}\abss{h_{2,y'}}_{L^{2}_x L^{\infty}_{t}} \\
	&\lesssim N^{-1}M^{-\frac{1}{2}}\abss{F_1(y)}_{L^2_{t}}\abss{F_2(y')}_{L^2_{t}}.
	\end{split}
	\end{equation}
	Taking $\int_{\mathbb{R}}\int_{\mathbb{R}}\cdot \ dydy'$, we obtain \eqref{part2}. We are now left to proving \eqref{a1} and \eqref{a2}. The proof is a mix of the ideas we used to prove \eqref{part1} and \eqref{part2}. For \eqref{a1}, we write $u$ using the decomposition \eqref{de1} and $v$ using the Duhamel formula. On the product expansion of $\abss{uv}_{L^2_{x,t}}$, we apply the triangle inequality and Minkowski inequality. We then apply the bilinear estimate \eqref{bil} to any term of the form $\abss{e^{it\Delta}Ae^{it\Delta}B}_{L^2_{x,t}}$ to get the desired bound. This leaves us with the terms of the form $\abss{e^{it\Delta}Ah_{2,y'}}_{L^2_{x,t}}$, on which we can apply \eqref{ext0}. In the same manner, we can prove \eqref{a2} using the Duhamel formula for $u$ and the decomposition \eqref{d2} for $v$. We finish the proof by observing that the terms of the form $\abss{h_{1,y}e^{it\Delta}B}_{L^2_{x,t}}$ can be bounded using \eqref{ext}. 	
\end{proof}	
\section{The Proof of \texorpdfstring{\Cref{thmm1}}{Theorem 1.1}}\label{sec4}
\noindent
Let $s$ be the exponent which satisfies the condition in \Cref{thmm1}. To obtain the local well-posedness, we redefine the spaces $X^s$ and $Y^s$ from \eqref{norm} in a way that the projections on the low frequencies are combined together. Since we assume a finite time restriction, so any spaces mentioned below are defined on the product space $\mathbb{R}\times [-1,1]$.
\begin{equation}
\begin{split}\label{norm3}
\abss{u}_{Z_N} & =\abss{u}_{L_t^{\infty}L_x^2\cap L^4_tL^{\infty}_x\cap L^6_{x,t}}+N^{-\frac{1}{2}}\abss{u}_{L_x^{2}L_t^{\infty}}+N^{\frac{1}{2}}\abss{u}_{L_x^{\infty}L_t^2} \\
\abss{u}_{Y_N} &= \inf\{N^{-\frac{1}{2}}\abss{u_1}_{L_x^1L_t^2} +\abss{u_2}_{L_{t}^1L_x^2}\ |\ u_1+u_2=u\}\\
\abss{u}_{X_N} & =\abss{u}_{Z_N} +\abss{(i\partial_t+\Delta)u}_{Y_N} \\
\abss{u}_{X^{s}} & =\abss{P_{\leq 1}u}_{X_1}+\Big(\sum_{N\in 2^{\mathbb{N}}} N^{2s}\abss{P_Nu}^2_{X_N}\Big)^{\frac{1}{2}} \\
\abss{u}_{Y^{s}}&=\abss{P_{\leq 1}u}_{Y_1}+\Big(\sum_{N\in 2^{\mathbb{N}}} N^{2s}\abss{P_Nu}^2_{Y_N}\Big)^{\frac{1}{2}}.
\end{split}
\end{equation}
\noindent The previous section prepares us all the estimates we need in order to obtain the linear estimate for the $X^s$ and $Y^s$ spaces; It follows from \eqref{main}, \eqref{loc} and \eqref{estt1} that for any $s\geq \frac{1}{2}$,
\begin{equation}\label{main1}
\abss{u}_{X^s} \lesssim \abss{u_0}_{H^s}+\abss{F}_{Y^s}.
\end{equation}
\noindent We are now ready to prove the multilinear estimate. 
\begin{theorem}\label{nonn1}  Let $d \geq 3$. For any $u_1,u_2,\ldots,u_d \in X^{s}$ where $s\geq \frac{1}{2}$, we have the following estimate. 
	\begin{equation}
	\Big\| (\partial_x u_1)\prod_{i=2}^{d}u_i\Big\|_{Y^{s}} \lesssim \prod_{i=1}^d\abss{u_i}_{X^{s}}. \label{linn6} 
	\end{equation}
\end{theorem}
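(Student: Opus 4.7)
The strategy is a dyadic Littlewood--Paley decomposition of each $u_i$, Hölder's inequality in the mixed-norm spaces embedded in $X_N$ and $Y_N$, and the bilinear Strichartz estimate (Theorem~\ref{bithm}) to close the critical endpoint. Writing $u_i = P_{\leq 1}u_i + \sum_{N_i} P_{N_i}u_i$, it suffices to bound each dyadic piece
\[
F_{\vec N} := P_N\!\Bigl((\partial_x P_{N_1}u_1)\prod_{i=2}^{d}P_{N_i}u_i\Bigr)
\]
and assemble the bounds in $\ell^2_N$ with weight $N^{2s}$. Frequency support forces $N \lesssim \max_i N_i$, and by symmetry among the $d-1$ non-derivative factors we may order $N_2 \geq N_3 \geq \cdots \geq N_d$, reducing the analysis to two cases: $N_1 \geq N_2$ (the derivative sits on the top frequency) and $N_2 \geq N_1$ (derivative on a lower frequency, which yields the same or better bounds).

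In the generic subcase $N \sim N_1 \gg N_2$, I will use the $L_x^1L_t^2$ branch of $Y_N$ with its $N^{-1/2}$ weight and apply Hölder:
\[
\|F_{\vec N}\|_{L_x^1L_t^2} \leq \|\partial_x P_{N_1}u_1\|_{L_x^\infty L_t^2}\,\|P_{N_2}u_2\|_{L_x^2L_t^\infty}\prod_{i=3}^{d}\|P_{N_i}u_i\|_{L_{x,t}^\infty}.
\]
The three types of factors are controlled by $N_1^{1/2}\|u_1\|_{X_{N_1}}$, $N_2^{1/2}\|u_2\|_{X_{N_2}}$, and $N_i^{1/2}\|u_i\|_{X_{N_i}}$ respectively, via the local smoothing norm in $X_{N_1}$ together with Bernstein (Lemma~\ref{p1}), the maximal function component $N_2^{-1/2}\|\cdot\|_{L_x^2L_t^\infty}$ of $X_{N_2}$, and Bernstein applied to $L_t^\infty L_x^2 \subset X_{N_i}$. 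The $N^{-1/2}$ weight cancels the $N_1^{1/2}$ from the derivative, producing
\[
N^s\,N^{-1/2}\|F_{\vec N}\|_{L_x^1L_t^2} \lesssim \prod_{i=1}^{d}\bigl(N_i^s\|P_{N_i}u_i\|_{X_{N_i}}\bigr)\,\prod_{i=2}^{d}N_i^{1/2-s},
\]
with the extra factors bounded by $1$ thanks to $s \geq 1/2$. The complementary case $N_2 \geq N_1$ and the $P_{\leq 1}$ low-frequency contribution are handled by the same template, together with the finite-time embedding $L^2_{x,t} \hookrightarrow L_t^1 L_x^2$ on $[-1,1]$.

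Summation over $(N_1,\ldots,N_d)$ by iterated Cauchy--Schwarz is routine for $s > 1/2$, since each dyadic sum $\sum_{N_i\leq N_{i-1}} N_i^{1/2-s}$ converges geometrically. The main obstacle is the critical endpoint $s = 1/2$: here the factors $N_i^{1/2-s}$ equal $1$ and the nested $\ell^1 \hookrightarrow \ell^2$ passages would accumulate an unbounded $(\log N_1)^{(d-1)/2}$. I overcome this by replacing the pair $\|\partial_x P_{N_1}u_1\|_{L_x^\infty L_t^2}\|P_{N_2}u_2\|_{L_x^2L_t^\infty}$ in the Hölder split with the bilinear estimate
\[
\|(\partial_x P_{N_1}u_1)(P_{N_2}u_2)\|_{L^2_{x,t}} \lesssim N_1^{1/2}\|u_1\|_{X_{N_1}}\|u_2\|_{X_{N_2}},
\]
obtained from Theorem~\ref{bithm} (the $N_1^{1/2}$ being $\|\partial_x P_{N_1}u_1\|_{X_{N_1}} \sim N_1\|u_1\|_{X_{N_1}}$ multiplied by the $N_1^{-1/2}$ bilinear gain), followed by Hölder with the remaining factor $\|P_{N_3}u_3\|_{L_x^2L_t^\infty}$ and $L_{x,t}^\infty$ on the rest. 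This removes the $N_2^{1/2}$ previously lost on $u_2$ and generates an extra geometric factor $N_2^{-1/2}$ in the summand, which dominates all the accumulated logarithms via the convergence of $\sum_n 2^{-n}n^{d-2}$ and closes the $\ell^2_N$ summation against $\prod_{i=1}^d\|u_i\|_{X^{1/2}}$.
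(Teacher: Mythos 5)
Your use of the bilinear estimate \eqref{bi2} on the pair $(\partial_x P_{N_1}u_1)(P_{N_2}u_2)$ to kill the logarithmic loss at $s=\frac12$ is precisely the mechanism the paper uses, and your estimate $\|\partial_x P_{N_1}u_1\|_{X_{N_1}}\lesssim N_1\|u_1\|_{X_{N_1}}$ followed by the $N_1^{-1/2}$ gain is correct. However, the case decomposition is incomplete in a way that matters. Your proposal treats only the subcase $N\sim N_1\gg N_2$, and \Cref{bithm} requires a genuine frequency gap ($N\gg M$), so this bilinear step simply does not apply once $N_1\sim N_2$. But $N_1\sim N_2$ is unavoidable after ordering, and in fact it is the \emph{only} way a low output frequency $N\ll N_1$ can occur, since the high pieces must nearly cancel.

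In that high-high-to-low regime your whole framework breaks: the weighted branch $N^{-1/2}\|\cdot\|_{L^1_xL^2_t}$ of $Y_N$ now has a weight $N^{-1/2}$ that is \emph{large} (because $N$ is small), while the local smoothing estimate on $\partial_x P_{N_1}u_1$ still costs $N_1^{1/2}$, so the leading factor is $N^{-1/2}N_1^{1/2}\gg 1$ and the bound does not sum in $\ell^2_N$ at $s=\frac12$. This is exactly why the paper's Cases II and III switch to the unweighted $L^1_tL^2_x$ branch of $Y_N$, re-pair the bilinear estimate as $P_{N_1}\partial_x u_1\cdot P_{N_3}u_3$ (high $\times$ low, which does have a gap), put $P_{N_2}u_2$ in the Strichartz space $L^4_tL^\infty_x$, and — for $d=3$ with $N_1\sim N_2\sim N_3$, where no bilinear pairing is available — use a triple $L^6_{x,t}$ split. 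You mention the finite-time embedding $L^2_{x,t}\hookrightarrow L^1_tL^2_x$ only for the $P_{\leq1}$ tail, not as the essential tool for the high-high interaction, and you never use $L^4_tL^\infty_x$ or $L^6_{x,t}$ at all. The assertion that ``$N_2\geq N_1$ yields the same or better bounds'' does not address any of this: the problem is not which factor carries the derivative but the coincidence $N_1\sim N_2$ combined with a low output frequency, and that sits inside the case $N_1\geq N_2$. Until these interactions are treated, the proof is not complete.
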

\begin{proof}
	It suffices to prove that
	\begin{align}
	\Big\| (\partial_x u_1)\prod_{i=2}^{d}u_i\Big\|_{Y^{s}} \lesssim \abss{u_1}_{X^{s}}\prod_{i=2}^d\abss{u_i}_{X^{\frac{1}{2}}}. \label{linn5} 
	\end{align}
	which implies \eqref{linn6} since $X^s\subset X^{\frac{1}{2}}$ due to the absence of low frequency projections. In view of \eqref{loc} and \eqref{estt1}, we can treat $P_{\leq 1}$ as $P_1$, so it suffices to estimate the summation over high frequencies:
	\begin{equation}\label{ab}
	\begin{split}
	\sum_{N,N_1,\ldots,N_d}N^{s}\Big\|P_N(P_{N_1}\partial_x u_1\prod_{i=2}^{d}P_{N_i}u_i)\Big\|_{Y^s},
	\end{split}
	\end{equation}
	where $N\geq 1$ and $N_i\geq 1$ for all $i$ in the summation. We can assume that $N_1 \geq N_2 \geq \ldots \geq N_d$ and $N\lesssim N_1$. This is because $u_1$ is the only term in \eqref{ab} that has a derivative, and so any other frequency distribution would lead to a better estimate. We define $c_{N_1,1}=N_1^s\abss{P_{N_1}u_1}_{X_{N_1}}$ and $c_{N_i,i}=N_i^{\frac{1}{2}}\abss{P_{N_i}u_i}_{X_{N_i}}$ for $2\leq i \leq d$. Thus, we see that $\abss{c_{N_1,1}}_{l^2(N_1)}=\abss{u_1}_{X^{s}}$ and $\abss{c_{N_i,i}}_{l^2(N_i)}=\abss{u_i}_{X^{\frac{1}{2}}}$ for $2\leq i \leq d$. In order to obtain the $l^2$ summation of $c_{N_i,i}$, we will repeatedly be using the following application of the Cauchy-Schwarz inequality:
	\begin{equation}\label{tool}
	\begin{split}
	\sum_{N_j,\ldots,N_d}\frac{1}{N_j^a}\prod_{i=j}^{d}c_{N_i,i}  \leq \sum_{N_j,\ldots,N_d}\prod_{i=j}^{d}\frac{1}{N_i^{\frac{a}{d}}}c_{N_i,i} &\leq \prod_{i=j}^{d} \sum_{N_i\geq 1}\frac{1}{N_i^{\frac{a}{d}}}c_{N_i,i} \\
	&\lesssim \prod_{i=j}^{d}\abss{u_i}_{X^{\frac{1}{2}}},
	\end{split}
	\end{equation}
	for any $a>0$. To prove \eqref{linn5}, we split the summation over three different kinds of frequency interactions.
	\begin{equation*}
	\begin{split}
	&\sum_{N,N_1,\ldots,N_d}N^{s}\Big\|P_N(P_{N_1}\partial_x u_1\prod_{i=2}^{d}P_{N_i}u_i)\Big\|_{Y^s} \\&{}= \Big(\sum_{I}+\sum_{II}+\sum_{III}\Big)N^{s}\Big\|P_N(P_{N_1}\partial_x u_1\prod_{i=2}^{d}P_{N_i}u_i)\Big\|_{Y^s}.
	\end{split}
	\end{equation*}
	Each of the summations contains certain ranges of $N,N_1,\ldots, N_d$ described by the following cases:
	\begin{enumerate}[label=$\Roman*).$,wide, labelwidth=!, labelindent=0pt]
		\item $N_1 \gg N_2$ and $N\sim N_1$. \\
		By H{\"o}lder inequality, \eqref{li1} with $\gamma=2$ and \eqref{tool},
		\begin{align*}
		\sum_{N_1,\ldots,N_d}	\Big\|P_N&(P_{N_1}\partial_x u_1\prod_{i=2}^{d}P_{N_i}u_i)\Big\|_{L^1_xL^2_t} \\
		&\lesssim \sum_{N_i}\abss{P_{N_1}\partial_x u_1P_{N_2} u_2}_{L^{2}_{x,t}}\abss{P_{N_3}u_3}_{L^{2}_xL^{\infty}_t}\prod_{i=4}^{d}\abss{P_{N_i}u_i}_{L^{\infty}_{x,t}}\\
		&\lesssim \sum_{N_i}\frac{1}{N_1^{s-\frac{1}{2}}N_2^{\frac{1}{2}}}\prod_{i=1}^{d}c_{N_i,i} \\
		&\lesssim \frac{1}{N^{s-\frac{1}{2}}}\sum_{N_i}\frac{1}{N_2^{\frac{1}{2}}}\prod_{i=1}^{d}c_{N_i,i} \\
		&\lesssim \frac{1}{N^{s-\frac{1}{2}}}\sum_{N_1\sim  N}c_{N_1,1}\prod_{i=2}^{d}\abss{u_i}_{X^{\frac{1}{2}}}.
		\end{align*}
		Therefore,
		\[\sum_{I}N^{s-\frac{1}{2}}\Big\|P_N(P_{N_1}\partial_x u_1\prod_{i=2}^{d}P_{N_i}u_i)\Big\|_{L^1_xL^2_t} \lesssim \sum_{N_1\sim  N}c_{N_1,1}\prod_{i=2}^{d}\abss{u_i}_{X^{\frac{1}{2}}}.\]
		Taking the $l^2$ summation with respect to $N\geq 1$, we obtain \eqref{linn5}. \\
		\item $N_1 \sim N_2 \gg  N_3\geq \ldots \geq N_d$ and $N\lesssim N_1$.  \\
		\\
		In this case, we use the bilinear estimate for the product $P_{N_1}\partial_x u_1P_{N_3}u_3$ and put $P_{N_2}u_2$ in the Strichartz space $L^4_tL^{\infty}_x$:
		\begin{align*}
		\sum_{N_1,\ldots,N_d}	\Big\|P_N&(P_{N_1}\partial_x u_1\prod_{i=2}^{d}P_{N_i}u_i)\Big\|_{L^1_tL^2_x} \\
		&\lesssim 	\sum_{N_1,\ldots,N_d}	\Big\|P_N(P_{N_1}\partial_x u_1\prod_{i=2}^{d}P_{N_i}u_i)\Big\|_{L_t^{\frac{4}{3}}L^2_x} \\ &\lesssim \sum_{N_i}\abss{P_{N_1}\partial_x u_1P_{N_3} u_3}_{L^{2}_{t,x}}\abss{P_{N_2}u_2}_{L^{4}_tL^{\infty}_x}\prod_{i=4}^{d}\abss{P_{N_i}u_i}_{L^{\infty}_{t,x}}\\
		&\lesssim \sum_{N_i}\frac{1}{N_1^{s-\frac{1}{2}}N_2^{\frac{1}{2}}N_3^{\frac{1}{2}}}\prod_{i=1}^{d}c_{N_i,i} \\
		&\lesssim \Big(\sum_{N_1\sim N_2}\frac{1}{N_1^{s}}c_{N_1,1}c_{N_2,2}\Big)\Big(\sum_{N_3,\ldots , N_d}\frac{1}{N_3^{\frac{1}{2}}}\prod_{i=3}^{d}c_{N_i,i}\Big) \\
		&\lesssim \Big(\sum_{N_1\gtrsim N}\frac{1}{N_1^{s}}c_{N_1,1}\Big)^{\frac{1}{2}}\prod_{i=2}^{d}\abss{u_i}_{\dot{X}^{\frac{1}{2}}},
		\end{align*}
		where we used \eqref{tool} in the second to last step. Therefore,
		\[\sum_{II}	N^{2s}\abss{P_N(P_{N_1}\partial_x u_1\prod_{i=2}^{d}P_{N_i}u_i)}^2_{L^1_tL^2_x}  \lesssim  \abss{u_1}^2_{X^{s}}\prod_{i=2}^d\abss{u_i}^2_{X^{\frac{1}{2}}}.\]
		\item $N_1 \sim N_2 \sim  N_3\geq \ldots \geq N_d$ and $N\lesssim N_1$.  \\
		\\
		We divide the proof into two cases depending on the degree $d$. \\
		\begin{enumerate}[label=\textbf{\Alph*).}]
			\item $d=3$. \\
			Even though we cannot use the bilinear estimate in this case, the fact that $N_1\sim N_2 \sim N_3$ allows us to cancel the derivative loss in $P_{N_1} \partial_x u_1$ by the $\frac{1}{2}$ regularity from $P_{N_2} u_2$ and $P_{N_3} u_3$ via the H{\"o}lder inequality:
			\begin{align*}
			\sum_{N_1\sim N_2\sim N_3}&\Big\|P_N[(P_{N_1}\partial_x u_1)P_{N_2}u_2P_{N_3}u_3]\Big\|_{L^1_tL^2_x} \\&\lesssim 	\sum_{N_1\sim N_2\sim N_3}\Big\|P_N[(P_{N_1}\partial_x u_1)P_{N_2}u_2P_{N_3}u_3]\Big\|_{L^{2}_{t,x}} \\ &\lesssim 	\sum_{N_1\sim N_2\sim N_3}\abss{P_{N_1} \partial_x u_1}_{L^{6}_{t,x}}\abss{P_{N_2} u_2}_{L^{6}_{t,x}}\abss{P_{N_3} u_3}_{L^{6}_{t,x}}\\
			&\lesssim 	\sum_{N_1\sim N_2\sim N_3}\frac{N_1^{1-s}}{N_2^{\frac{1}{2}}N_3^{\frac{1}{2}}}c_{N_1,1}c_{N_2,2}c_{N_3,3} \\
			&\lesssim \Big(\sum_{N_1\gtrsim N}\frac{1}{N_1^{s}}c_{N_1,1}\Big)^{\frac{1}{2}}\abss{u_2}_{X^{\frac{1}{2}}}\abss{u_3}_{X^{\frac{1}{2}}},
			\end{align*}				
			where the last step follows from the Cauchy-Schwarz inequality on $c_{N_1,1}c_{N_2,2}c_{N_3,3}$. \\
			\item $d\geq 4$. \\
			\noindent
			We again take advantage of the finite time restriction and put $P_{N_i} u_i$ for $1\leq i \leq 4$ in suitable Strichartz spaces, namely $L^{\infty}_{t}L^{2}_x$ and $L^{4}_{t}L^{\infty}_x$.
			\begin{align*}
			\sum_{N_1,\ldots,N_d}	\Big\|P_N&(P_{N_1}\partial_x u_1\prod_{i=2}^{d}P_{N_i}u_i)\Big\|_{L^1_tL^2_x} \\
			&\lesssim 	\sum_{N_1,\ldots,N_d}	\Big\|P_N(P_{N_1}\partial_x u_1\prod_{i=2}^{d}P_{N_i}u_i)\Big\|_{L_t^{\frac{4}{3}}L^2_x} \\ &\lesssim \sum_{N_i}\abss{P_{N_1} \partial_x u_1}_{L^{\infty}_{t}L^{2}_x}\prod_{i=2}^4\abss{P_{N_i} u_i}_{L^{4}_{t}L^{\infty}_x}\prod_{i=5}^d\abss{P_{N_i} u_i}_{L^{\infty}_{t,x}}\\
			&\lesssim \sum_{N_i}\frac{N_1^{1-s}}{N_2^{\frac{1}{2}}N_3^{\frac{1}{2}}N_4^{\frac{1}{2}}}\prod_{i=1}^dc_{N_i,i} \\
			&\lesssim \Big(\sum_{N_1,N_2,N_3}\frac{1}{N_1^s}c_{N_1,1}c_{N_2,2}c_{N_3,3}\Big)\Big(\sum_{N_4\ldots , N_d}\frac{1}{N_4^{\frac{1}{2}}}\prod_{i=4}^d c_{N_i,i}\Big) \\
			&\lesssim \Big(\sum_{N_1\gtrsim N}\frac{1}{N_1^{s}}c_{N_1,1}\Big)^{\frac{1}{2}}\prod_{i=2}^{d}\abss{u_i}_{X^{\frac{1}{2}}},
			\end{align*}	
		\end{enumerate}
		In either case, it follows that
		\[\sum_{III} N^{2s}\Big\|P_N(P_{N_1}\partial_x u_1\prod_{i=2}^{d}P_{N_i}u_i)\Big\|^2_{L^1_tL^2_x} \lesssim \abss{u_1}^2_{X^{s}}\prod_{i=2}^d\abss{u_i}^2_{X^{\frac{1}{2}}}.\]
		and this concludes the proof.
	\end{enumerate}
\end{proof}
\noindent
In view of this theorem, if every term in $P(u,\bar{u},\partial_x u,\partial_x \bar{ u})$ has only one derivative, then we expect to close the contraction argument in a subspace of $X^{\frac{1}{2}}$. On the other hand, if we replace $u_j$ by $\partial_x u_j$ for some $j\geq 2$, then it follows from \eqref{p1} that $\abss{\partial_x u_i}_{X^s}\lesssim \abss{u_i}_{X^{s+1}}$ for any $s>0$, and so \eqref{linn5} yields
\begin{align*}
\Big\| (\partial_x u_1)(\partial_x u_j)\prod_{\substack{i=2 \\ i\not= j}}^{d}u_i\Big\|_{Y^{\frac{3}{2}}} 
&\lesssim \abss{u_1}_{X^{\frac{3}{2}}}\abss{\partial_x u_j}_{X^{\frac{1}{2}}}\prod_{\substack{i=2 \\ i\not= j}}^d\abss{u_i}_{X^{\frac{1}{2}}} \\
&\lesssim \abss{u_1}_{X^{\frac{3}{2}}}\prod_{i=2}^d\abss{u_i}_{X^{\frac{3}{2}}}, 
\shortintertext{and for any $s\geq \frac{3}{2}$, we have}
\Big\|  (\partial_x u_1)(\partial_x u_j)\prod_{\substack{i=2 \\ i\not= j}}^{d}u_i\Big\|_{Y^s} 
&\lesssim \abss{u_1}_{X^s}\prod_{i=2}^d\abss{u_i}_{X^s}. 
\end{align*}
Consequently, in the case that a term in $P(u,\bar{u},\partial_x u,\partial_x \bar{u})$ has more than one derivative, we can employ the contraction argument in $X^{\frac{3}{2}}$.
\begin{proof}[Proof of \Cref{thmm1}]
	We define $F(u):=P(u,\bar{u},\partial_x u,\partial_x \bar{ u})$. Let $u$ and $v$ be functions in $X^s$. We use the main linear estimate \eqref{main1} and simple algebra to obtain
    \begin{equation}\begin{split}\label{proof}  \Big\| \int_0^t  & e^{i(t-s)\partial_x^2}[F(u(x,s))-F(v(x,s))] \ ds  \Big\|_{X^s} \\ 
	&\leq c_1\left\|F(u)-F(v) \right\|_{Y^s} \\
	&\leq c_1c_2(\abss{u}^{d-1}_{X^s}+\abss{v}^{d-1}_{X^s})\abss{u-v}_{X^s},\end{split}\end{equation}
	where we used the multilinear estimate \eqref{linn6} in the last step. \\
	
	\noindent Let $C:=\min\left\{(8c_1c_2)^{-\frac{1}{d-1}},(4c_2)^{-\frac{1}{d-1}}\right\}$ where $c_1$ and $c_2$ are constants in \eqref{proof}. Define a Banach space as stated in the theorem: 
    \[X=\{u\in C_t^0H_x^s([-1,1]\times\mathbb{R}) \cap X^s : \abss{u}_{X^s}\leq 2C\}.\]  
    Let $u_0\in X$ such that $\abss{u_0}_{H^s}\leq C$. Then, for $u \in X$, we define an operator
	\[Lu := e^{it\Delta}u_0-i\int_{0}^{t}e^{i(t-s)\Delta} F(u(x,s)) \ ds,\]
	Again, by the main linear estimate, we have
	\begin{equation*}\begin{split}\abss{Lu}_{X^s}&\leq \abss{u_0}_{H^s}+\left\|F \right\|_{Y^s} \\ &\leq \abss{u_0}_{H^s}+c_2\abss{u}^d_{X^s} \\ & \leq \frac{3C}{2}<2C.\end{split}\end{equation*}
	Thus, $L$ maps $X$ to $X$.
	Moreover, from \eqref{proof},
	\[\abss{Lu-Lv}_{X^s} \leq c_1c_2(\abss{u}^{d-1}_{X^s}+\abss{v}^{d-1}_{X^s})\abss{u-v}_{X^s} \leq \frac{1}{4}\abss{u-v}_{X^s}.\]
	Thus, $L$ is a contraction and the local well-posedness in $X$ immediately follows. 
\end{proof}

\section{The Proof of \texorpdfstring{\Cref{thmm}}{Theorem 1.3} when \texorpdfstring{$d\geq 6$}{d>=6}}\label{sec6}
\noindent
In the previous sections, we used the time restriction to avoid dealing with low frequencies at $\xi\leq 1$. However, such argument cannot be used to obtain the global well-posedness for the gDNLS with nonlinearity of order $d\geq 5$. Therefore, the function spaces that we use will take these low frequencies into account. Let $s_0(d)=\frac{1}{2}-\frac{1}{d-1}=\frac{d-3}{2(d-1)}$ for $d\geq 5$. The spaces $X^s$ and $Y^s$ in \eqref{norm} are replaced by those defined by the quasi-norms $\dot{X}^s$ and $\dot{Y}^s$ which in turn are defined by the norms $X_N$ and $Y_N$,
\begin{align*}
\abss{u}_{X_N} & =\abss{u}_{L_t^{\infty}L_x^2}+N^{-\frac{1}{4}}\abss{u}_{L_x^{4}L_t^{\infty}}+N^{\frac{1}{2}}\abss{u}_{L_x^{\infty}L_t^2} \\
& \ \ \ +N^{-\frac{1}{2}}\abss{(i\partial_t+\Delta)u}_{L_x^1L_t^2} \\
\abss{u}_{\dot{X}^{s}} & =\Big(\sum_{N\in 2^{\mathbb{Z}}} N^{2s}\abss{P_Nu}^2_{X_N}\Big)^{\frac{1}{2}}  \\
\abss{u}_{X^s} &= \abss{u}_{\dot{X}^0}+\abss{u}_{\dot{X}^s} \numberthis \label{norm2}\\
\abss{u}_{Y_N} &= N^{-\frac{1}{2}}\abss{u}_{L_x^1L_t^2} \\
\abss{u}_{\dot{Y}^{s}}&=\Big(\sum_{N\in 2^{\mathbb{Z}}} N^{2s}\abss{P_Nu}^2_{Y_N}\Big)^{\frac{1}{2}} \\
\abss{u}_{Y^s} &= \abss{u}_{\dot{Y}^0}+\abss{u}_{\dot{Y}^s} .
\end{align*}
\noindent
Thus we have embeddings $X^s\hookrightarrow H^s$ and $X^{s}\hookrightarrow X^{s_0}\hookrightarrow \dot{X}^{s_0}$ for any $s\geq s_0$. In view of \eqref{main}, we obtain the linear estimate
\begin{equation}\label{main2}
\abss{u}_{X^s} \lesssim \abss{u_0}_{H^s}+\abss{F}_{Y^s}.
\end{equation}
With these choices of spaces, we can establish the multilinear estimate for $d\geq 6$. The proof for the case $d=5$ is significantly more involved and requires some frequency-modulation analysis, so we will postpone it to the next section.
\begin{theorem} \label{thmnon2} Let $d\geq 6$. We have the following estimates. \\ 
	1). For any $u_1,u_2,\ldots,u_d \in X^{s_0}$,
	\begin{align}
	\Big\| \partial_x\prod_{i=1}^{d}u_i\Big\|_{\dot{Y}^{s_0}} &\lesssim \prod_{i=1}^d\abss{u_i}_{\dot{X}^{s_0}}, \label{linn1} 
	\end{align}
	2). Let $s\geq s_0$. For any $u_1,u_2,\ldots,u_d \in X^{s}$,
	\begin{align}
	\Big\| \partial_x\prod_{i=1}^{d}u_i\Big\|_{Y^s} &\lesssim \prod_{i=1}^d\abss{u_i}_{X^s}. \label{linn2}
	\end{align}
\end{theorem}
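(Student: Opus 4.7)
The plan is to adapt the Littlewood-Paley strategy of \Cref{nonn1} to the homogeneous setting. I decompose each $u_i=\sum_{N_i}P_{N_i}u_i$ dyadically, assume by symmetry $N_1\ge N_2\ge\cdots\ge N_d$, note that the projection $P_N$ on the product forces $N\lesssim N_1$, and normalize by setting $c_{N_i,i}:=N_i^{s_0}\|P_{N_i}u_i\|_{X_{N_i}}$ in the critical case (or $N_i^s\|P_{N_i}u_i\|_{X_{N_i}}$ in the subcritical case), so that $\|u_i\|_{\dot X^{s_0}}=\|c_{N_i,i}\|_{\ell^2(N_i)}$. The task then reduces to producing, for each dyadic multi-index $(N,N_1,\ldots,N_d)$, a pointwise bound of the form
\[
N^{s_0}\big\|P_N\partial_x\prod_{i=1}^d P_{N_i}u_i\big\|_{Y_N}\lesssim W(N,N_1,\ldots,N_d)\prod_{i=1}^d c_{N_i,i}
\]
with a weight $W$ that can be summed against the sequences $c_{N_i,i}$ by iterated Cauchy-Schwarz of the type used in the tool \eqref{tool}.

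To build $W$, I plan to split into frequency regimes. In the off-diagonal case $N_1\gg N_2$ (which forces $N\sim N_1$), I will apply the bilinear estimate \eqref{bi2} to the pair $(P_{N_1}u_1,P_{N_2}u_2)$ in $L^2_{x,t}$ to gain $N_1^{-1/2}$, and distribute the remaining $d-2$ factors in $L_x^{2(d-2)}L_t^\infty$ via the maximal function estimate \eqref{li11} with $\gamma=2(d-2)\ge 8$. Converting to $c$'s yields the weight $N_2^{-s_0}\prod_{i\ge 3}N_i^{\eta_d}$ with $\eta_d:=(d-3)/[2(d-2)(d-1)]>0$, and the algebraic identity $(d-2)\eta_d=s_0$ then allows a telescoping Cauchy-Schwarz over the ordered chain $N_d\le\cdots\le N_2$ to absorb these factors into $\prod_{i\ge 2}\|u_i\|_{\dot X^{s_0}}$. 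In the near-diagonal case $N_1\sim N_2$ I will instead put $\partial_x P_{N_1}u_1$ in $L_x^\infty L_t^2$ to gain the $N_1^{-1/2}$ from the smoothing term of $X_{N_1}$, and put the remaining factors in $L_x^{d-1}L_t^\infty$ at cost $N_i^{s_0}$ (still controlled by $X_{N_i}$ through the main linear estimate \eqref{main2}); this matches the critical scaling and gives the weight $(N/N_1)^{s_0+1/2}$, which is square-summable in $N\lesssim N_1$. The fully diagonal contribution $N_1\sim\cdots\sim N_d\sim N$ is then handled by Hölder in $d$ equal exponents together with the embedding $\ell^2\subset\ell^d$, which is available because $d\ge 6\ge 2$.

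The subcritical statement \eqref{linn2} is easier: replacing $s_0$ with $s>s_0$ produces a genuine decay factor $N_i^{s_0-s}$ on every non-leading factor, so the multi-dyadic sum converges by a direct use of \eqref{tool}, and the low-frequency piece built into $X^s=\dot X^0+\dot X^s$ causes no new difficulty since the time integration in this section is global over $\mathbb R$. The main obstacle is the critical exponent bookkeeping in \eqref{linn1}: because the estimate is scale-invariant, no positive power of any $N_i$ can be spared, and every distribution of factors among the available mixed Lebesgue norms must land exactly on the critical scaling. The hypothesis $d\ge 6$ is essential both to keep the telescoping identity $(d-2)\eta_d=s_0$ viable with $\eta_d>0$ and to allow the embedding $\ell^2\subset\ell^d$ with $d\ge 2$ in the full-diagonal summation; when $d=5$ this slack closes and the frequency-modulation techniques deferred to Section~\ref{sec7} are needed instead.
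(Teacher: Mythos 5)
Your plan diverges from the paper's proof in a substantive way: the paper never invokes the bilinear estimate here and never uses a maximal-function norm other than the one built directly into the $X_N$ norm of \eqref{norm2} (namely $N^{-1/4}\|\cdot\|_{L_x^4L_t^\infty}$, supplemented by $L^\infty_{x,t}$ via Bernstein). It handles every frequency regime through a single H\"older distribution — one factor in $L_x^\infty L_t^2$, four in $L_x^4L_t^\infty$, $d-5$ in $L^\infty_{x,t}$ — and then observes that the resulting exponents on $N_2,\dots,N_d$ sum to zero but are \emph{not individually zero}; after ordering $N_2\ge\cdots\ge N_d$ this is traded for a geometric decay factor $(N_d/N_2)^{1/(4(d-1))}$, which is exactly what makes the inner dyadic sums converge via the Young-type device \eqref{yi}. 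Your off-diagonal case (bilinear on $(P_{N_1}u_1,P_{N_2}u_2)$ plus $d-2$ factors in $L_x^{2(d-2)}L_t^\infty$) reproduces this decay mechanism by a different route — your identity $(d-2)\eta_d=s_0$ plays the role of the paper's exponent-balance observation — and appears salvageable, modulo the fact that $L_x^{2(d-2)}L_t^\infty$ is not directly in the $X_N$ norm \eqref{norm2}, so you would need to justify that norm by interpolating $L_x^4L_t^\infty$ against $L^\infty_{x,t}$ (or by transferring \eqref{li11} to $X_N$ elements), which you do not address.

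The genuine gap is in the near-diagonal case $N_1\sim N_2$. You propose putting $\partial_x P_{N_1}u_1$ in $L_x^\infty L_t^2$ and all $d-1$ remaining factors in $L_x^{d-1}L_t^\infty$ at cost $N_i^{s_0}$ each. Because $s_0(\gamma=d-1)=s_0$, this is \emph{exactly} the critical scaling for every factor, so after normalizing by $c_{N_i,i}=N_i^{s_0}\|P_{N_i}u_i\|_{X_{N_i}}$ the weight you compute, $(N/N_1)^{s_0+1/2}$, depends only on $N$ and $N_1$ and carries no decay in $N_3,\dots,N_d$. Cauchy--Schwarz on the pair $c_{N_1,1}c_{N_2,2}$ and square-summability in $N\lesssim N_1$ take care of $N,N_1,N_2$, but you are left with the sum over all dyadic $N_3\ge\cdots\ge N_d$ with no weight at all, and $\sum_{N_i}c_{N_i,i}$ is not controlled by $\|u_i\|_{\dot X^{s_0}}$ — this is the exact-scaling obstruction you yourself flag in your closing paragraph, and your near-diagonal distribution walks straight into it. The fix is to break the flat scaling deliberately: distribute the non-leading factors over \emph{two different} exponents (as the paper does with $L_x^4L_t^\infty$ and $L^\infty_{x,t}$), or reuse your off-diagonal pairing $L_{x,t}^2\times L_x^{2(d-2)}L_t^\infty$ but with H\"older replacing the bilinear gain on the top pair, so that the excess/deficit on $N_2,\dots,N_d$ telescopes into a $(N_d/N_2)^\alpha$ factor with $\alpha>0$. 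Separately, your stated reason why $d\ge 6$ is needed is off: $\ell^2\subset\ell^d$ holds for all $d\ge 2$ and $\eta_d>0$ for all $d\ge 4$; the true constraint is that $d\ge 6$ is what leaves at least one non-leading factor to be placed at a \emph{strictly supercritical} $L^p_x$ exponent and thereby generate the decay — precisely what fails at $d=5$ and forces the modulation argument of Section~\ref{sec7}.
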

\begin{proof}
	Our goal is to obtain the estimate
	\begin{equation}\label{goal}
	\sum_N N^{2s+1}\abss{P_N\prod_{i=1}^{d}u_i}^2_{L_x^1L_t^2} \lesssim  \sum_{j=1}^d\abss{u_j}^2_{\dot{X}^{s}}\prod_{i\not= j}\abss{u_i}^2_{\dot{X}^{s_0}},
	\end{equation}
	which implies \eqref{linn1} by choosing $s=s_0$. We get \eqref{linn2} by combining two different versions of this estimate with a fixed $s\geq s_0$ and with $s=0$. We will focus on each term on the left-hand side of \eqref{linn2}
	\begin{equation*}
	\begin{split}
	N^{2s-1}\Big\|P_N\partial_x\prod_{i=1}^d u_i\Big\|^2_{L^1_xL^2_t} &= N^{2s-1}\Big\|P_N\partial_x\sum_{N_1,\ldots,N_d}\prod_{i=1}^{d}P_{N_i}u_i\Big\|^2_{L^1_xL^2_t} \\
	&\lesssim N^{2s+1}\sum_{N_1,\ldots,N_d}\Big\|P_N\prod_{i=1}^{d}P_{N_i}u_i\Big\|^2_{L^1_xL^2_t} ,
	\end{split}
	\end{equation*}
	and study different kinds of frequency interactions. 
	As before, we assume that $N_1 \geq N_2 \geq \ldots \geq N_d$. We define $c_{N_1,1}=N_1^s\abss{P_{N_i}u_1}_{X_{N_1}}$ and $c_{N_i,i}=N_i^{s_0}\abss{P_{N_i}u_i}_{X_{N_i}}$ for $2\leq i \leq d$. We will use the following two estimates for a product of terms with higher and lower frequencies.\\
	\begin{enumerate}
		\item For $N\lesssim N_1\sim N_2 \sim \ldots \sim N_{j-1}$ where $j\geq 3$, it follows from the Cauchy-Schwarz inequality that
		\begin{equation}\label{cs}
		\sum_{N_i}\prod_{i=1}^{j-1} c_{N_i,i}  \lesssim  \Big(\sum_{N_1 \gtrsim N}c^2_{N_1,1}\Big)^{\frac{1}{2}}\prod_{i=2}^{j-1}\abss{u_i}_{\dot{X}^{s_0}}.
		\end{equation}
		\item For $N_{j}\geq N_{j+1}\geq \ldots \geq N_d$ and any $\alpha >0$, Young's inequality and trivial estimate $c_{N_i,i}\leq \abss{u_i}_{\dot{X}^{s_0}}$ imply
		\begin{equation}\label{yi}
		\begin{split}
		\sum_{N_{j}\geq \ldots \geq N_d}\left(\frac{N_d}{N_{j}}\right)^{\alpha}\prod_{i=j}^{d} c_{N_i,i}  &\leq \prod_{i=j+1}^{d-1} \abss{u_i}_{\dot{X}^{s_0}}\sum_{N_{j}\geq \ldots \geq N_d} \left(\frac{N_d}{N_{j}}\right)^{\alpha}c_{N_j,j}c_{N_d,d} \\
		&\lesssim_{\alpha} \prod_{i=j}^{d} \abss{u_i}_{\dot{X}^{s_0}}.
		\end{split}
		\end{equation}	
	\end{enumerate}
	These estimates will be used in each case after appropriate uses of H{\"o}lder inequality, Bernstein inequality and bilinear estimate \eqref{bi2}. \\
	By H{\"o}lder and Bernstein inequalities,
	\begin{equation*}
	\begin{split}
	\Big\|P_N&\prod_{i=1}^{d}u_i\Big\|_{L^1_xL^2_t} \\ 
	&\lesssim \sum_{N_i}\abss{P_{N_1}u_1}_{L^{\infty}_{x}L^{2}_{t}}\prod_{i=2}^{5}\abss{P_{N_i}u_i}_{L^{4}_xL^{\infty}_t}\prod_{i=6}^{d}\abss{P_{N_i}u_i}_{L^{\infty}_{x,t}} \\
	&\lesssim \sum_{N_i}\abss{P_{N_1}u_1}_{L^{\infty}_{x}L^{2}_{t}}\prod_{i=2}^{5}\abss{P_{N_i}u_i}_{L^{4}_xL^{\infty}_t}\prod_{i=6}^{d}N_i^{\frac{1}{2}}\abss{P_{N_i}u_i}_{L^{\infty}_{t}L^{2}_{x}} \\
	&\lesssim \sum_{N_i}\frac{1}{N_1^{s+\frac{1}{2}}}\prod_{i=2}^{5}\frac{1}{N_i^{{s_0}-\frac{1}{4}}}\prod_{i=6}^{d}N_i^{\frac{1}{2}-s_0}\prod_{i=1}^{d}c_{N_i,i}.
	\end{split}
	\end{equation*}
	Since $s_0=\frac{1}{2}-\frac{1}{d-1}$, the sums of the exponents in $\prod_{i=2}^{5}N_i^{{s_0}-\frac{1}{4}}$ and $\prod_{i=6}^{d}N_i^{\frac{1}{2}-s_0}$ are equal. With the assumption that $N_2\geq N_3\geq \ldots \geq N_d$, the right-hand side is bounded by
	\begin{equation}\label{d6}
	\sum_{N_i}\frac{1}{N_1^{s+\frac{1}{2}}}\left(\frac{N_d}{N_2}\right)^{\frac{1}{4(d-1)}} \prod_{i=1}^{d}c_{N_i,i}.
	\end{equation}
	To estimate this term, we consider the following two frequency interactions. \\
	\begin{enumerate}
		\item $N\sim N_1 \gg N_2\geq \ldots \geq N_d$. \\
		Using \eqref{yi} on $c_{N_2,2}c_{N_3,3}\ldots c_{N_d,d}$, we can bound \eqref{d6} by
		\[\sum_{N_1\sim N}\frac{1}{N_1^{s+\frac{1}{2}}}c_{N_1,1}\prod_{i=2}^{5}\abss{u_i}_{\dot{X}^{s_0}},\]
		for each fixed $N$. We have that
		\begin{equation*}
		\begin{split}
		\sum_N \Big(\sum_{N_1\sim N}\frac{N^{2s+1}}{N_1^{s+\frac{1}{2}}}c_{N_1,1}\Big)^2\prod_{i=2}^{5}\abss{u_i}^2_{\dot{X}^{s_0}} &\sim  \sum_{N_1}c_{N_1,1}^2\prod_{i=2}^{5}\abss{u_i}^2_{\dot{X}^{s_0}} \\
		&= \abss{u_1}^2_{\dot{X}^s}\prod_{i=2}^{5}\abss{u_i}^2_{\dot{X}^{s_0}} ,
		\end{split}
		\end{equation*}
		which implies \eqref{goal} as desired. \\
		\item $N\lesssim N_1 \sim N_2\geq \ldots \geq N_d$. \\
		Using \eqref{cs} on $c_{N_1,1}c_{N_2,2}$ and \eqref{yi} on $c_{N_3,3}c_{N_4,4}\ldots c_{N_d,d}$, we can bound \eqref{d6} by
		\[\Big(\sum_{N_1\gtrsim N}\frac{1}{N_1^{2s+1}}c^2_{N_1,1}\Big)^{\frac{1}{2}}\prod_{i=2}^{5}\abss{u_i}_{\dot{X}^{s_0}}.\]
		Therefore, by switching the order of summations,
		\begin{equation*}
		\begin{split}
		\sum_N \sum_{N_1\gtrsim N}\frac{N^{2s+1}}{N_1^{2s+1}}c^2_{N_1,1}\prod_{i=2}^{5}\abss{u_i}^2_{\dot{X}^{s_0}} &\lesssim  \sum_{N_1}c_{N_1,1}^2\prod_{i=2}^{5}\abss{u_i}^2_{\dot{X}^{s_0}},
		\end{split}
		\end{equation*}
		which again implies \eqref{goal}. This concludes the proof for $d\geq 6$.
	\end{enumerate}
\end{proof}
\noindent
Using the linear estimate \eqref{main2} and the multilinear estimates \eqref{linn1} and \eqref{linn2}, the proof for \Cref{thmm} follows in the same manner as in \Cref{thmm1}. Note that we did not use any finite time restriction in any parts of the proof.
\bigskip
\section{The Proof of \texorpdfstring{\Cref{thmm}}{Theorem 1.3} when \texorpdfstring{$d=5$}{d=5}}\label{sec7}
\noindent
The difficulty in this case arises from the fact that there is no room left to put the lowest frequency term in $L^{\infty}_{x,t}$. Thus, we will take this case with extra care by adding the $\dot{X}^{0,b,q}$ spaces. For each $N\in 2^{\mathbb{Z}}$, let $A_N$ be a set defined by
\begin{equation}\label{set3}A_M:=\{(\xi,\tau):M\leq \abs{\tau +\xi^2} \leq 2M\}.\end{equation}
Recall that $\tilde{u}(\xi,\tau)$ is the space-time Fourier transform of $u(x,t)$. The $\dot{X}^{0,b,q}$ space is the closure of the test functions under the following norm:
\begin{equation*}
\abss{u}_{\dot{X}^{0,b,q}} :=\Big(\sum_{M\in 2^{\mathbb{Z}}}(N^b\abss{\tilde{u}}_{L^2_{\xi,\tau}(A_M)})^q\Big)^{\frac{1}{q}}.
\end{equation*} 
Previously, the nonlinear space $\dot{Y}^s$ is based on the space $Z_N$ defined by the following norm on each frequency $N$.
\[\abss{u}_{Z_N}:= N^{-\frac{1}{2}}\abss{u}_{L_x^1L_t^2}.\] 
We modify this by adding the $\dot{X}^{0,-\frac{1}{2},1}$ space.
\[Y_N :=  Z_N +  \dot{X}^{0,-\frac{1}{2},1}.\]
The solution space is defined by
\begin{align*}
\abss{u}_{X_N} &= \abss{u(0)}_{L^2_x} +\abss{(i\partial_t+\Delta)u}_{Y_N} \\
\abss{u}_{\dot{X}^{s}} & =\Big(\sum_{N\in 2^{\mathbb{Z}}} N^{2s}\abss{P_Nu}^2_{X_N}\Big)^{\frac{1}{2}} \\
\abss{u}_{X^s} &= \abss{u}_{\dot{X}^0}+\abss{u}_{\dot{X}^s}, \numberthis \label{xsd5}
\end{align*} 
and the nonlinear space is defined by
\begin{equation}
\begin{split}
\abss{u}_{\dot{Y}^{s}}&=\Big(\sum_{N\in 2^{\mathbb{Z}}} N^{2s}\abss{P_Nu}^2_{Y_N}\Big)^{\frac{1}{2}} \\
\abss{u}_{Y^s} &= \abss{u}_{\dot{Y}^0}+\abss{u}_{\dot{Y}^s}. 
\end{split}
\end{equation}
The following proposition shows that any estimates of free solutions that we proved in \Cref{sec2} can be extended to functions in $X_N$ using the Schr{\"o}dinger equation version of Lemma 4.1 from Tao (\cite{MR2286393}). 
\begin{prop}[\cite{MR2286393}]\label{prop5}
	Let $S$ be any space-time Banach space that satisfies the following inequality,
	\begin{equation}\label{inf1}
	\abss{g(t)F(x,t)}_S \ \leq \abss{g}_{L^{\infty}_t}\abss{F(x,t)}_S,
	\end{equation}
	for any $F\in S$ and $g\in L^{\infty}_t(\mathbb{R})$. Let $T:L^2(\mathbb{R})\times\ldots \times L^2(\mathbb{R}) \to S$ be a spatial multilinear operator satisfying
	\begin{equation*}
	\abss{T(e^{it\Delta}u_{1,0},\ldots , e^{it\Delta}u_{k,0})}_S \ \lesssim \prod_{i=1}^k \abss{u_{i,0}}_{L^2_x}
	\end{equation*}
	for any $u_{1,0},\ldots , u_{k,0}\in L^2_x(\mathbb{R})$. Then the following estimate 
	\begin{equation}\label{TT}
	\abss{T(u_1,\ldots , u_k)}_S \ \lesssim \prod_{i=1}^k (\abss{u_i(0)}_{L^2_x}+\abss{(i\partial_t+\Delta)u_i}_{ \dot{X}^{0,-\frac{1}{2},1}})
	\end{equation}
	holds true for any $u_1,\ldots , u_k \in  \dot{X}^{0,-\frac{1}{2},1}$ provided that $u_i$ is supported at frequency $\sim N_i$ for $1\leq i \leq k$.
\end{prop}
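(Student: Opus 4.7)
The plan is to decompose each input into a free-evolution piece plus an inhomogeneous remainder, and then to represent the remainder as a continuous superposition of free solutions parametrized by a modulation variable, so that the free-solution bound for $T$ can be applied in each slice. First I would write $u_i = e^{it\Delta}u_i(0) + u_i^{\sharp}$, where $u_i^{\sharp} := u_i - e^{it\Delta}u_i(0)$ has zero initial data and satisfies $(i\partial_t + \Delta)u_i^{\sharp} = (i\partial_t + \Delta)u_i =: F_i$. Expanding $T(u_1,\ldots,u_k)$ by multilinearity produces $2^k$ terms; the purely homogeneous one is handled immediately by the hypothesis with constants $\abss{u_i(0)}_{L^2_x}$, so attention focuses on mixed terms containing at least one $u_i^{\sharp}$.

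Next, for each slot occupied by a $u_i^{\sharp}$, dyadically decompose $u_i^{\sharp} = \sum_{M_i \in 2^{\mathbb{Z}}} P_{M_i}^{\mathrm{mod}} u_i^{\sharp}$ into modulation shells $A_{M_i}$ from \eqref{set3}. The change of variables $\tau = \lambda_i - \xi^2$ on the Fourier side rewrites each shell as
\begin{equation*}
(P_{M_i}^{\mathrm{mod}} u_i^{\sharp})(x,t) = \int_{|\lambda_i| \sim M_i} e^{it\lambda_i} \bigl( e^{it\Delta} h_i(\cdot, \lambda_i) \bigr)(x) \, d\lambda_i,
\end{equation*}
where $\widehat{h_i}(\xi, \lambda_i) = \widetilde{P_{M_i}^{\mathrm{mod}} u_i^{\sharp}}(\xi, \lambda_i - \xi^2)$ and Plancherel gives $\abss{h_i}_{L^2_{x,\lambda_i}} = \abss{\widetilde{P_{M_i}^{\mathrm{mod}} u_i^{\sharp}}}_{L^2_{\xi, \tau}}$. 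Inserting these representations (and $e^{it\Delta} u_j(0)$ in any remaining slots) into $T$ and interchanging the spatial multilinear operator with the $\lambda$-integrals produces an iterated integral whose integrand is $e^{it(\lambda_{j_1} + \cdots + \lambda_{j_m})}$ times $T$ applied to a tuple of free evolutions. The modulating phase lies in $L^{\infty}_t$, so hypothesis \eqref{inf1} followed by the free-solution bound for $T$ yields
\begin{equation*}
\abss{T(\cdots)}_S \lesssim \Bigl( \prod_{j \in J_{\mathrm{hom}}} \abss{u_j(0)}_{L^2_x} \Bigr) \prod_{i \in J_{\mathrm{inh}}} \int_{|\lambda_i| \sim M_i} \abss{h_i(\cdot, \lambda_i)}_{L^2_x} \, d\lambda_i.
\end{equation*}

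Applying Cauchy--Schwarz in $\lambda_i$ bounds each such integral by $M_i^{1/2} \abss{\widetilde{P_{M_i}^{\mathrm{mod}} u_i^{\sharp}}}_{L^2_{\xi, \tau}}$; since the identity $\widetilde{F_i} = -(\tau + \xi^2) \widetilde{u_i^{\sharp}}$ gives $|\widetilde{P_{M_i}^{\mathrm{mod}} u_i^{\sharp}}| \sim M_i^{-1} |\widetilde{P_{M_i}^{\mathrm{mod}} F_i}|$ on $A_{M_i}$, this bound becomes $M_i^{-1/2} \abss{\widetilde{P_{M_i}^{\mathrm{mod}} F_i}}_{L^2}$, and summing over $(M_1, \ldots, M_k) \in (2^{\mathbb{Z}})^k$ reproduces exactly $\prod_{i \in J_{\mathrm{inh}}} \abss{F_i}_{\dot{X}^{0, -1/2, 1}}$. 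Assembling the $2^k$ mixed contributions together with the purely homogeneous one gives the desired estimate \eqref{TT}. The main technical obstacle I anticipate is the justification of Fubini and the interchange of the spatial operator $T$ with the $\lambda$-integrals, together with the absolute convergence of the modulation sum in the Banach space $S$; both are handled by first working on a dense class of Schwartz inputs whose Fourier supports avoid the paraboloid, exploiting the $\ell^1$ summability already built into the $\dot{X}^{0, -1/2, 1}$ norm, and then passing to the limit.
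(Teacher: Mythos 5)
The paper cites Tao \cite{MR2286393} for this proposition and gives no proof, so there is no internal argument to compare against; your overall strategy matches the standard transference argument (modulation decomposition, the $\lambda$-superposition $\int e^{it\lambda}(e^{it\Delta}h_\lambda)\,d\lambda$ of free evolutions, pulling the oscillatory phase out with \eqref{inf1}, the free-solution bound, Cauchy--Schwarz in $\lambda$, $\ell^1$ summation of shells), and that machinery is right. There is, however, a genuine gap at the first step: the subtraction $u_i^{\sharp}:=u_i - e^{it\Delta}u_i(0)$ does \emph{not} produce a remainder whose modulation-shell sum $\sum_M P_M^{\mathrm{mod}}u_i^{\sharp}$ recovers $u_i^{\sharp}$, because the shells $A_M$ omit the paraboloid $\tau+\xi^2=0$ and $\widetilde{u_i^{\sharp}}$ generically has singular mass there.

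The condition $u_i^{\sharp}(0)=0$ only says $\int\widetilde{u_i^{\sharp}}(\xi,\tau)\,d\tau=0$; it does not make the space-time Fourier transform delta-free on the paraboloid. Since $\mathcal{F}_{x,t}\bigl[e^{it\Delta}u_i(0)\bigr]=\hat u_i(\xi,0)\,\delta(\tau+\xi^2)$, subtracting the free evolution typically \emph{introduces} a paraboloid singularity: writing $\widetilde u_i = a_i(\xi)\delta(\tau+\xi^2)+b_i(\xi,\tau)$ with $b_i$ the regular part, one gets $\widetilde{u_i^{\sharp}} = b_i - \bigl(\int b_i\,d\tau\bigr)\delta(\tau+\xi^2)$, and the coefficient $\int b_i\,d\tau$ does not vanish in general. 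Even on your proposed dense class (inputs with Fourier support off the paraboloid, so $a_i\equiv 0$), $\hat u_i(\xi,0)=\int b_i\,d\tau$ is nonzero, so the problem persists and your shell sum silently drops a free-solution component of $u_i^{\sharp}$. The correct split is by Fourier-space structure rather than by initial data: take $u_i = e^{it\Delta}\check a_i + v_i$ with $\widetilde v_i = b_i$. Then $v_i$ has a clean modulation expansion with $\abss{v_i}_{\dot X^{0,\frac{1}{2},1}}\sim\abss{F_i}_{\dot X^{0,-\frac{1}{2},1}}$ via $\widetilde F_i=-(\tau+\xi^2)b_i$ off the paraboloid, and the free piece is controlled because $\hat u_i(\xi,0)=a_i(\xi)+\int b_i(\xi,\tau)\,d\tau$ together with Cauchy--Schwarz in $\tau$ on each shell give $\abss{a_i}_{L^2}\lesssim\abss{u_i(0)}_{L^2}+\abss{F_i}_{\dot X^{0,-\frac{1}{2},1}}$. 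With that replacement the rest of your argument is sound.
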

\noindent
With this proposition, we can obtain several Strichartz-type estimates for $X_N$ that will be useful later on.
\begin{corollary}For any $u\in X_N$, we have the following estimates:
	\begin{align}
	\abss{u}_{L^{\infty}_tL^2_x\cap L^6_{t,x}} &\lesssim \abss{u}_{X_N} \label{striX} \\
	\abss{u}_{L^{\infty}_xL^2_t} &\lesssim N^{-\frac{1}{2}}\abss{u}_{X_N} \\
	\abss{u}_{L^4_xL^{\infty}_t} &\lesssim N^{\frac{1}{4}}\abss{u}_{X_N} 
	\end{align}
\end{corollary}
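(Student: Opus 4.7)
The plan is to deduce all three inequalities from \Cref{prop5} (applied to the identity operator $T(u)=u$ in the linear case $k=1$), combined with a Duhamel splitting that extracts the $Z_N$-component of $Y_N$. For each target space $S$ among $L^{\infty}_tL^2_x\cap L^6_{t,x}$, $L^{\infty}_xL^2_t$, and $L^4_xL^{\infty}_t$, I first verify the two hypotheses of \Cref{prop5}: the contraction bound \eqref{inf1} is immediate in each case (trivial for $L^{\infty}_tL^2_x$, $L^{\infty}_xL^2_t$, and $L^4_xL^{\infty}_t$ because the $t$-variable sits in the innermost slot; and by Fubini for $L^6_{t,x}$), while the free-solution estimate $\abss{e^{it\Delta}u_0}_S\lesssim N^{\sigma(S)}\abss{u_0}_{L^2_x}$ for $u_0$ localised at frequency $N$ comes from $L^2$-conservation together with \eqref{stri1} at $p=q=6$ for the first space, from \eqref{stri2} together with $\abss{D^{1/2}u_0}_{L^2}\sim N^{1/2}\abss{u_0}_{L^2}$ for the second, and from \eqref{li11} with $\gamma=4$ (so that $(\gamma-2)/(2\gamma)=1/4$) for the third. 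Invoking \eqref{TT} then yields
\[
\abss{u}_S \;\lesssim\; N^{\sigma(S)}\Bigl(\abss{u(0)}_{L^2_x} + \abss{(i\partial_t+\Delta)u}_{\dot{X}^{0,-\frac{1}{2},1}}\Bigr).
\]

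To replace $\dot{X}^{0,-\frac{1}{2},1}$ on the right by the full $Y_N = Z_N + \dot{X}^{0,-\frac{1}{2},1}$, I fix an arbitrary decomposition $(i\partial_t+\Delta)u = F_1+F_2$ with $F_1\in Z_N$, $F_2\in \dot{X}^{0,-\frac{1}{2},1}$, and split $u = v+w$ via Duhamel so that $(i\partial_t+\Delta)v=F_1$ with $v(0)=u(0)$, and $(i\partial_t+\Delta)w=F_2$ with $w(0)=0$. The $w$-contribution is controlled directly by the display above applied to $w$. For $v$, the three bounds $\abss{v}_{L^{\infty}_tL^2_x}+N^{1/2}\abss{v}_{L^{\infty}_xL^2_t}+N^{-1/4}\abss{v}_{L^4_xL^{\infty}_t}\lesssim \abss{u(0)}_{L^2_x}+\abss{F_1}_{Z_N}$ are precisely the $d=5$, $s_0=1/4$ content of \Cref{prop1} (whose proof passes through \Cref{lem1}). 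The only missing piece is the $L^6_{t,x}$ bound on $v$, which I will obtain by writing $v-e^{it\Delta}u(0)=e^{it\Delta}G$ with $G=-i\int e^{-is\Delta}F_1(s)\,ds$, using the dual smoothing bound $\abss{G}_{L^2}\lesssim N^{-1/2}\abss{F_1}_{L^1_xL^2_t}$ (dual of \eqref{stri2}) followed by \eqref{stri1} at $p=q=6$; the time-truncation $\int_0^t$ is then restored by a Christ--Kiselev reduction, which applies since $(6,6)$ is a non-endpoint admissible pair. Summing the $v$- and $w$-estimates and taking the infimum over all admissible decompositions of $(i\partial_t+\Delta)u$ in $Y_N$ yields the three inequalities.

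The main technical hurdle is the $L^6_{t,x}$ bound on $v$, since this norm is not part of the $X_N$ defined in Section \ref{sec3}; however, the Christ--Kiselev argument sketched above is standard for non-endpoint pairs and should go through without substantive difficulty. All remaining verifications---the contraction property \eqref{inf1} for each $S$ and the free-solution estimates at the correct powers of $N$---are routine and were in effect already used in earlier sections.
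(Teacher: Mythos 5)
Your approach is correct in substance and considerably more careful than the paper's own one-line proof, which simply says to apply \Cref{prop5} to the linear and bilinear estimates. As you correctly observe, \Cref{prop5} by itself only controls $\abss{u}_S$ by $\abss{u(0)}_{L^2_x}+\abss{(i\partial_t+\Delta)u}_{\dot{X}^{0,-\frac{1}{2},1}}$, whereas the $X_N$ norm of Section~\ref{sec7} only demands $(i\partial_t+\Delta)u\in Y_N=Z_N+\dot{X}^{0,-\frac{1}{2},1}$; the $Z_N$-component must be handled separately, via the Section~\ref{sec3} machinery, which is exactly the Duhamel splitting $u=v+w$ you make explicit. Your verification of the two hypotheses of \Cref{prop5}, your $N$-powers $\sigma(S)$, and your invocation of \Cref{prop1} and \eqref{linear} for the three mixed-norm bounds on $v$ are all correct.

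The one step that needs repair is your Christ--Kiselev reduction for the $L^6_{t,x}$ bound on $v$. The standard C--K lemma upgrades an untruncated kernel bound to a truncated one for operators $L^p(\mathbb{R}_t;A)\to L^q(\mathbb{R}_t;B)$ with $p<q$; but the source $F_1\in L^1_xL^2_t$ does not have this $L^p_t(A)$ structure, since the spatial integration sits outside the time integration. As written---forming $G=-i\int e^{-is\Delta}F_1\,ds$ and applying dual smoothing to $G$---the argument does not present the operator in the form to which C--K applies. The fix is to first peel off the spatial variable as in \eqref{inhom}: write the truncated Duhamel term as $\int_{\mathbb{R}}w_y\,dy$ with $w_y(x,t)=\int_0^t (P_NK_0)(x-y,t-s)F_1(y,s)\,ds$, apply Minkowski's inequality to reduce to bounding $\abss{w_y}_{L^6_{t,x}}$ for each fixed $y$, and only then invoke C--K on the scalar-input operator $g\mapsto\int_0^t(P_NK_0)(\cdot-y,t-s)g(s)\,ds\colon L^2_t\to L^6_tL^6_x$. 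Since $2<6$ and the untruncated version equals $e^{it\Delta}v_{0,y}$ with $\abss{v_{0,y}}_{L^2_x}\lesssim N^{-1/2}\abss{F_1(y)}_{L^2_t}$ (this is \eqref{dec3}), C--K applies and yields $\abss{w_y}_{L^6_{t,x}}\lesssim N^{-1/2}\abss{F_1(y)}_{L^2_t}$, which integrates to the desired $Z_N$ bound. Alternatively---and more in the spirit of the paper, which introduces the decomposition \eqref{dec2} precisely to avoid C--K in local-smoothing type norms---you can dispense with C--K entirely: \eqref{dec2} writes $w_y$ as two free evolutions, each handled by \eqref{stri1} at $(6,6)$ together with \eqref{dec1}, plus a remainder $h_y$; the remainder satisfies $\abss{h_y}_{L^2_{x,t}}\lesssim N^{-2}\abss{\Delta h_y}_{L^2_{x,t}}\lesssim N^{-3/2}\abss{F_1(y)}_{L^2_t}$ by \eqref{dec1} (and frequency localization) and $\abss{h_y}_{L^\infty_{x,t}}\lesssim\abss{F_1(y)}_{L^2_t}$ by \eqref{inf}, so interpolation $\abss{h_y}_{L^6_{t,x}}\leq\abss{h_y}_{L^2_{x,t}}^{1/3}\abss{h_y}_{L^\infty_{x,t}}^{2/3}\lesssim N^{-1/2}\abss{F_1(y)}_{L^2_t}$ closes the argument without any time-truncation issue.
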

\begin{proof}
	We apply \Cref{prop5} to linear estimates \eqref{stri1}, \eqref{stri2} and \eqref{li1}, and bilinear estimates \eqref{bi15} and \eqref{bil}. 
\end{proof}
\noindent We also have the bilinear estimate adapted to the space $X_N$.
\begin{prop}\label{propp2}
	Let $N,M$ and $\lambda$ be dyadic numbers such that $M\leq N$ and $\lambda \lesssim N$. For any functions $u$ and $v$ supported at frequency $\sim N$ and $\sim M$, respectively, we have
	\begin{align}
	\abss{P_{>\lambda}(u\bar{v})}_{L^{2}_{t,x}} &\lesssim \lambda^{-\frac{1}{2}}\abss{u}_{X_N}\abss{v}_{X_M} \label{bi25} \\
	\intertext{In addition, if $\hat{u}$ and $\hat{v}$ have disjoint supports and $\alpha=\inf\abs{ \supp(\hat{u})-\supp(\hat{v})}$, then we have} 
	\abss{uv}_{L^{2}_{t,x}} &\lesssim \alpha^{-\frac{1}{2}}\abss{u}_{X_N}\abss{v}_{X_M}. \label{bi3} 
	\end{align}
\end{prop}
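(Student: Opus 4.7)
The approach is to reduce both \eqref{bi25} and \eqref{bi3} to the bilinear estimates \eqref{bi15} and \eqref{bil} for free solutions, by combining \Cref{prop5} (Tao's $\dot{X}^{0,b,q}$-lifting lemma, which handles the $\dot{X}^{0,-\frac{1}{2},1}$ component of the $Y_N$ norm) with a suitable adaptation of the argument in the proof of \Cref{bithm} (which handles the $Z_N$ component). I will outline the proof of \eqref{bi25}; the proof of \eqref{bi3} is obtained by substituting \eqref{bil} for \eqref{bi15} and dropping the $P_{>\lambda}$ throughout.

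Since $Y_N = Z_N + \dot{X}^{0,-\frac{1}{2},1}$, I write $(i\partial_t+\Delta)u = F^Z + F^X$ and $(i\partial_t+\Delta)v = G^Z + G^X$ with $F^Z, G^Z$ in the $Z$-component and $F^X, G^X$ in the $\dot{X}^{0,-\frac{1}{2},1}$-component, nearly attaining the respective $Y_N$, $Y_M$ infima. Duhamel then decomposes $u = u^{\mathrm{old}} + u^X$, where $u^{\mathrm{old}} = e^{it\Delta}u(0) + \int_0^t e^{i(t-s)\Delta}F^Z(s)\,ds$ is admissible for the old $X_N$ space of \Cref{bithm}, and $u^X = \int_0^t e^{i(t-s)\Delta}F^X(s)\,ds$ has zero initial data and $\dot{X}^{0,-\frac{1}{2},1}$ source; similarly $v = v^{\mathrm{old}} + v^X$. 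Expanding $P_{>\lambda}(u\bar v)$ into four cross-terms, the ``old-old'' piece $P_{>\lambda}(u^{\mathrm{old}}\overline{v^{\mathrm{old}}})$ is bounded by following the proof of \Cref{bithm} verbatim, using the dyadic $P_\mu$-estimate from \eqref{bi15} (summed over $\mu > \lambda$, which yields the $\lambda^{-1/2}$ gain since $\sum_{\mu>\lambda}\mu^{-1}\sim\lambda^{-1}$) in place of \eqref{bil}. The ``new-new'' piece $P_{>\lambda}(u^X\overline{v^X})$ is bounded by applying \Cref{prop5} to the bilinear map $T(f,g)=P_{>\lambda}(f\bar g)$ into $S = L^2_{t,x}$, which trivially satisfies \eqref{inf1}; \eqref{bi15} supplies the input free-solution estimate, and \Cref{prop5} delivers $\lambda^{-1/2}\abss{F^X}_{\dot{X}^{0,-\frac{1}{2},1}}\abss{G^X}_{\dot{X}^{0,-\frac{1}{2},1}}$.

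The main obstacle is posed by the two mixed cross-terms $P_{>\lambda}(u^{\mathrm{old}}\overline{v^X})$ and $P_{>\lambda}(u^X\overline{v^{\mathrm{old}}})$, which must reconcile the two distinct technical frameworks. Splitting $u^{\mathrm{old}} = u^H + u^Z$ with $u^H = e^{it\Delta}u(0)$ and $u^Z$ the $F^Z$-Duhamel integral, the free-times-$X$ subterm $P_{>\lambda}(u^H\overline{v^X})$ falls within the scope of \Cref{prop5} since $u^H$ has vanishing source. The more delicate subterm $P_{>\lambda}(u^Z\overline{v^X})$ is handled by invoking the decomposition \eqref{dec2} from \Cref{lem1}, which expresses $u^Z$ as an integral over $y$ of free-solution pieces (whose $L^2_x$-norms are each bounded by $N^{-1/2}\abss{F^Z(y,\cdot)}_{L^2_t}$ according to \eqref{dec1}) plus a remainder $h_y$ satisfying the space-time bounds in \eqref{dec1}. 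The free-solution pieces pair with $v^X$ through \Cref{prop5}, while $h_y$ pairs with $v^X$ via H\"older-type arguments mirroring \eqref{ext0}--\eqref{ext} in the proof of \Cref{bithm}, now using \Cref{prop5} applied to the smoothing estimate \eqref{stri2} to control $v^X$ in $L_x^\infty L_t^2$ by $M^{-1/2}\abss{G^X}_{\dot{X}^{0,-\frac{1}{2},1}}$. Summing all four cross-terms and taking the infimum over the $Y_N$-decomposition yields \eqref{bi25}; the proof of \eqref{bi3} proceeds identically with \eqref{bil} replacing \eqref{bi15} and $\alpha^{-1/2}$ replacing $\lambda^{-1/2}$.
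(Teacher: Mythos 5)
Your proposal follows the same overall strategy as the paper: split the $Y_N$ norm of the source into the $Z_N$ and $\dot{X}^{0,-\frac{1}{2},1}$ components, handle the $Z_N$ piece with the BeIoKeTa-style decomposition \eqref{dec2} exactly as in \Cref{bithm}, handle the $\dot{X}^{0,-\frac{1}{2},1}$ piece with \Cref{prop5} applied to the free-solution bilinear estimates \eqref{bi15}/\eqref{bil}, and treat the mixed cross-terms by combining both. The organization differs slightly from the paper---you apply \Cref{prop5} piece-by-piece after decomposing both factors, while the paper first treats $u$ as a fixed coefficient in a linear-in-$v$ operator, establishes \eqref{22} against a free solution $e^{it\Delta}v_0$ by decomposing $u$ alone, and then applies \Cref{prop5} once to lift $v$---but these are equivalent in spirit and either works.

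However, there is a gap in your handling of the mixed case. When you decompose $u^Z$ via \eqref{dec2}, the term $(P_{<\lambda/2^{50}}1_{x>0})e^{it\Delta}u_y$ is \emph{not} a free solution, because of the fixed $x$-dependent factor $P_{<\lambda/2^{50}}1_{x>0}$. It therefore cannot be fed directly into \Cref{prop5} as one of your ``free-solution pieces.'' The paper handles this by two devices that your proposal omits: (i) the frequency localization in \eqref{dec2} is changed from $N/2^{50}$ to $\lambda/2^{50}$, and (ii) a pull-out argument exploits the output restriction $P_{>\lambda}$: since $P_{<\lambda/2^{50}}1_{x>0}$ lives at frequencies $\ll\lambda$, the output at frequency $>\lambda$ can only see the $P_{\gtrsim\lambda}$ part of $e^{it\Delta}u_y\bar{v}$, i.e.\
\[
P_{>\lambda}\bigl[(P_{<\lambda/2^{50}}1_{x>0})\,e^{it\Delta}u_y\,\bar{v}\bigr]
= P_{>\lambda}\bigl[(P_{<\lambda/2^{50}}1_{x>0})\,P_{\gtrsim\lambda}(e^{it\Delta}u_y\,\bar{v})\bigr],
\]
after which the $1_{x>0}$ factor can be discarded by Minkowski and the remaining piece $P_{\gtrsim\lambda}(e^{it\Delta}u_y\,\bar{v})$ is a genuine free-times-$X_M$ pairing amenable to \Cref{prop5} and \eqref{bi15}. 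Without this step your estimate of the $u^Z\text{--}v^X$ cross-term is incomplete. (A minor aside: the dyadic-summation comment ``$\sum_{\mu>\lambda}\mu^{-1}\sim\lambda^{-1}$'' is unnecessary---\eqref{bi15} as proved already carries $\psi_{>\lambda}$ and delivers $\lambda^{-1/2}$ directly.)
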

\begin{proof}
	As before, the bilinear estimate for homogeneous solutions \eqref{bi15} and \eqref{bil} is the keys to proving these estimates. It suffices to prove \eqref{bi25}, since \eqref{bi3} will follow in a similar manner. Denote $F_1:=(i\partial_t+\Delta)u$ and $F_2:=(i\partial_t+\Delta)v$. Using \Cref{prop5} with $T(u_1,u_2)=u_1u_2$ to extend the bilinear estimate \eqref{bi15}, we obtain
	\[\abss{P_{>\lambda}(u\bar{v})}_{L^{2}_{t,x}} \lesssim \lambda^{-\frac{1}{2}}(\abss{u(0)}_{L^2_x}+\abss{F_1}_{\dot{X}^{0,-\frac{1}{2},1}})(\abss{v(0)}_{L^2_x}+\abss{F_2}_{\dot{X}^{0,-\frac{1}{2},1}}).\]
	Therefore, it suffices to prove that for any $u\in X_N$ and $v\in X_M$,
	\begin{align}
	\label{10}\abss{P_{>\lambda}(u\bar{v})}_{L^{2}_{t,x}} &\lesssim \lambda^{-\frac{1}{2}}(\abss{u(0)}_{L^2_x}+\abss{F_1}_{Z_N})(\abss{v(0)}_{L^2_x}+\abss{F_2}_{Z_M}), \\
	\label{11}
	\abss{P_{>\lambda}(u\bar{v})}_{L^{2}_{t,x}} &\lesssim \lambda^{-\frac{1}{2}}(\abss{u(0)}_{L^2_x}+\abss{F_1}_{Z_N})(\abss{v(0)}_{L^2_x}+\abss{F_2}_{\dot{X}^{0,-\frac{1}{2},1}}), \\
	\label{12}\abss{P_{>\lambda}(u\bar{v})}_{L^{2}_{t,x}} &\lesssim \lambda^{-\frac{1}{2}}(\abss{u(0)}_{L^2_x}+\abss{F_1}_{\dot{X}^{0,-\frac{1}{2},1}})(\abss{v(0)}_{L^2_x}+\abss{F_2}_{Z_N}).
	\end{align}
	We use the decomposition from \eqref{dec2} for $u$. However, in this case, the frequency localization at $\frac{N}{2^{50}}$ is replaced by $\frac{\lambda}{2^{50}}$:
	\begin{equation*}
	u(x,t)=e^{it\Delta}u(0)-\int_{\mathbb{R}} e^{it\Delta}\mathcal{L}u_y+(P_{\lambda/2^{50}}1_{x>0})e^{it\Delta}u_y-h_{y}(x,t) \ dy,	
	\end{equation*}
	where $\mathcal{L}:L^2_x\to L^2_x$ is a bounded operator and $u_y,\mathcal{L}u_y$ and $h_y$ are defined similarly to \eqref{v0}, \eqref{L} and \eqref{hh}, respectively. From the remark following \eqref{dec4}, we see that these functions are supported at frequency $\sim N$. Moreover, the following estimate still holds even with the frequency replacement.
	\begin{equation}\label{dd2}
	\begin{split}
	\abss{\mathcal{L}u_y}_{L_x^2}+\abss{u_y}_{L_x^2}+\frac{1}{N}(\abss{\Delta h_{y}}_{L_{x,t}^2}+\abss{ \partial_t h_{y}}_{L_{x,t}^2}) &\lesssim \frac{1}{N^{\frac{1}{2}}}\abss{F_1(y,t)}_{L_t^2}.
	\end{split}
	\end{equation}
	We consider all the possible terms in $P_{>\lambda}(u\bar{v})$. First, we consider all the terms that involve $P_{\lambda/2^{50}}1_{x>0}$. For any $G\in L^2_x$, we have that
	\begin{align*}
	P_{>\lambda}\Big[(P_{\lambda/2^{50}}1_{x>0})e^{it\Delta}u_y G\Big] = &P_{>\lambda}\Big[(P_{\lambda/2^{50}}1_{x>0})P_{\ll\lambda}(e^{it\Delta}u_y G)\Big] \\
	& {} + P_{>\lambda}\Big[(P_{\lambda/2^{50}}1_{x>0})P_{\gtrsim\lambda}(e^{it\Delta}u_y G)\Big] \\
	= & P_{>\lambda}\Big[(P_{\lambda/2^{50}}1_{x>0})P_{\gtrsim\lambda}(e^{it\Delta}u_y G)\Big].
	\end{align*}
	Let $\psi_{N/2^{50}}$ be the function defined by $P_{N/2^{50}}f:= \psi_{N/2^{50}}\ast f$. Consequently,
	\begin{align*}
	\Big\|P_{>\lambda}&\left[(P_{\lambda/2^{50}}1_{x>0})e^{it\Delta}u_y G\right]\Big\|_{L^2_{x,t}} \\
	&= \Big\| P_{>\lambda}\left[(P_{\lambda/2^{50}}1_{x>0})P_{\gtrsim\lambda}(e^{it\Delta}u_y G)\right]\Big\|_{L^2_{x,t}} \\
	&\lesssim \Big\| (P_{\lambda/2^{50}}1_{x>0})P_{\gtrsim\lambda}(e^{it\Delta}u_y G)\Big\|_{L^2_{x,t}} \\
	&= \Big\|(\psi_{\lambda/2^{50}}\ast 1_{x>0})P_{\gtrsim\lambda}(e^{it\Delta}u_y G)\Big\|_{L^2_{x,t}} \\
	&\leq \int \Big\| 1_{x-z>0}P_{\gtrsim\lambda}\left[e^{it\Delta}u_y(x)G(x)\right]\Big\|_{L^2_{x,t}} \abs{\psi_{N/2^{50}}(z)} \ dz \\
	&\lesssim   \abss{P_{\gtrsim\lambda}(e^{it\Delta}u_yG)}_{L^2_{x,t}}.
	\end{align*}
	In other words, to estimate such terms, we can take out the $P_{\lambda/2^{50}}1_{x>0}$ factor just like what we did in the proof of \Cref{bithm}. Following the same line of proof as for \eqref{part2} but using a different bilinear estimate \eqref{bi15} instead of \eqref{bil}, we obtain \eqref{10}. To prove \eqref{11} and \eqref{12}, we will show that for any $v_0\in L^2_x$ supported at frequency $\sim M$,
	\begin{equation}\label{22}
	\abss{P_{>\lambda}(u\overline{e^{it\Delta}v_0})}_{L^2_{x,t}} \lesssim \lambda^{-\frac{1}{2}}(\abss{u(0)}_{L^2_x}+\abss{F_1}_{Z_N})\abss{v_0}_{L^2_x},
	\end{equation}
	which, in view of \Cref{prop5} with $T(v)=P_{>\lambda}(u\bar{v})$, leads to \eqref{11}. 
	From \eqref{bi15} and \eqref{dd2}, we obtain
	\begin{align*}
	\abss{P_{>\lambda}(e^{it\Delta}u(0)\overline{e^{it\Delta}v_0})}_{L^2_{x,t}} &\lesssim  \lambda^{-\frac{1}{2}}\abss{u(0)}_{L^2_x}\abss{v_0}_{L^2_x} ,\\
	\abss{P_{>\lambda}(e^{it\Delta}\mathcal{L}u_y\overline{e^{it\Delta}v_0})}_{L^2_{x,t}} &\lesssim  \lambda^{-\frac{1}{2}}\abss{\mathcal{L}u_y}_{L^2_x}\abss{v_0}_{L^2_x},\\ 
	&\lesssim (\lambda N)^{-\frac{1}{2}}\abss{F_1(y,t)}_{L_t^2}\abss{v_0}_{L^2_x} \\
	\abss{P_{\gtrsim\lambda}(e^{it\Delta}u_y\overline{e^{it\Delta}v_0})}_{L^2_{x,t}}  &\lesssim \lambda^{-\frac{1}{2}}\abss{u_y}_{L^2_x}\abss{v_0}_{L^2_x}\\
	&\lesssim (\lambda N)^{-\frac{1}{2}}\abss{F_1(y,t)}_{L_t^2}\abss{v_0}_{L^2_x} .
	\end{align*}
	We use the last inequality to estimate the term in $P_{>\lambda}(u\overline{e^{it\Delta}v_0})$ that involves $P_{\lambda/2^{50}}1_{x>0}$.
	\begin{equation*}
	\begin{split}
	\Big\|P_{>\lambda}\left[(P_{\lambda/2^{50}}1_{x>0})e^{it\Delta}u_y \overline{e^{it\Delta}v_0}\right]\Big\|_{L^2_{x,t}} &\lesssim   \abss{P_{\gtrsim\lambda}(e^{it\Delta}u_y\overline{e^{it\Delta}v_0})}_{L^2_{x,t}} \\
	& \lesssim (\lambda N)^{-\frac{1}{2}}\abss{F_1(y,t)}_{L_t^2}\abss{v_0}_{L^2_x}.
	\end{split}
	\end{equation*}
	For the remaining term, we use the H{\"o}lder inequality, \eqref{dd2} and the fact that $\lambda \lesssim N$.
	\begin{equation}\label{13}
	\begin{split}
	\abss{P_{>\lambda}(h_y\overline{e^{it\Delta}v_0})}_{L^2_{x,t}} &\lesssim \abss{h_y}_{L^2_{x,t}}\abss{e^{it\Delta}v_0}_{L^{\infty}_{x,t}} \\
	&\lesssim \frac{M^{\frac{1}{2}}}{N^{\frac{3}{2}}}\abss{F_1(y,t)}_{L_t^2}\abss{v_0}_{L^2_x} \\
	&\lesssim (\lambda N)^{-\frac{1}{2}}\abss{F_1(y,t)}_{L_t^2}\abss{v_0}_{L^2_x} .
	\end{split}
	\end{equation}
	Recalling that $\abss{(i\partial_t+\Delta)u}_{Z_N}=N^{-\frac{1}{2}}\abss{(i\partial_t+\Delta)u}_{L^1_xL^2_t}$, these estimates yield \eqref{22} via the Minkowski inequality. The proof for \eqref{12} is similar, except at \eqref{13} where we have the following modification:
	\begin{equation*}
	\begin{split}
	\abss{P_{>\lambda}(e^{it\Delta}u_0\overline{h}_{y'})}_{L^2_{x,t}} &\lesssim \abss{e^{it\Delta}u_0}_{L^{\infty}_xL^2_{t}}\abss{h_{y'}}_{L^2_xL^{\infty}_{t}} \\
	&\lesssim (NM)^{-\frac{1}{2}}\abss{u_0}_{L^2_x}\abss{F_2(y',t)}_{L_t^2}\\
	&\lesssim (\lambda M)^{-\frac{1}{2}}\abss{u_0}_{L^2_x}\abss{F_2(y',t)}_{L_t^2} .
	\end{split}
	\end{equation*}
	For the second to last inequality, we used the smoothing estimate \eqref{stri2} and \eqref{h2} with $d=3$. This concludes the proof of \eqref{bi25}.
\end{proof}
\noindent We will also use the following estimate which was taken from Tao (\cite{MR2286393}) and modified to be suitable to our spaces.
\begin{prop}
	Suppose that $u$ is supported at frequency $\sim N$. Then we have
	\begin{equation}\label{xsb}
	\abss{u}_{\dot{X}^{0,\frac{1}{2},\infty}} \lesssim \abss{u}_{X_N}.
	\end{equation}
\end{prop}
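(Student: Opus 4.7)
My plan is to pass to the Fourier side and exploit the identity $(-\tau-\xi^2)\widetilde u=\widetilde F$ for $F:=(i\partial_t+\Delta)u$. Split $F=F_1+F_2$ with $F_1\in Z_N$ and $F_2\in\dot X^{0,-\frac12,1}$ realizing the infimum defining the $Y_N$-norm. The free piece $e^{it\Delta}u(0)$ has space-time Fourier transform $\widehat{u(0)}(\xi)\cdot\delta(\tau+\xi^2)$, supported on the paraboloid $\{\tau+\xi^2=0\}$, which is disjoint from every $A_M$ (see \eqref{set3}); hence the free part contributes nothing to the $\dot X^{0,\frac12,\infty}$-norm, and it suffices to control the Duhamel contribution.

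\textbf{Reduction by the symbol.} Since $A_M$ avoids the paraboloid, the identity $\widetilde u=-\widetilde F/(\tau+\xi^2)$ holds a.e.\ on $A_M$, modulo a distributional piece on the paraboloid which is invisible to the $L^2(A_M)$-norm. Consequently
\[
\abss{u}_{\dot X^{0,\frac12,\infty}}=\sup_{M}M^{\frac12}\abss{\widetilde u}_{L^2_{\xi,\tau}(A_M)}\lesssim \sup_{M}M^{-\frac12}\abss{\widetilde F}_{L^2_{\xi,\tau}(A_M)}.
\]
The $F_2$ contribution is then immediate by $\sup\leq\sum$:
\[
\sup_{M}M^{-\frac12}\abss{\widetilde F_2}_{L^2_{\xi,\tau}(A_M)}\leq \abss{F_2}_{\dot X^{0,-\frac12,1}}.
\]

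\textbf{Slab estimate for $F_1$.} For $F_1\in L^1_xL^2_t$ supported at spatial frequency $\sim N$, Minkowski's inequality gives
\[
\abss{\widetilde F_1}_{L^2(A_M)}\leq \int_{\mathbb R}\Big(\int_{A_M}\abs{(\mathcal F_tF_1)(x,\tau)}^2\,d\xi\,d\tau\Big)^{\frac12} dx,
\]
where $\mathcal F_t$ denotes the partial Fourier transform in $t$. For each fixed $(x,\tau)$, the $\xi$-slice $\{|\xi|\sim N,\,|\tau+\xi^2|\sim M\}$ has Lebesgue measure $\lesssim M/N$, since the map $\xi\mapsto \xi^2+\tau$ has derivative $\sim N$ on the support. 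Plancherel in $\tau$ then yields $\abss{\widetilde F_1}_{L^2(A_M)}\lesssim (M/N)^{\frac12}\abss{F_1}_{L^1_xL^2_t}$, and taking $\sup_{M}M^{-\frac12}$ contributes $N^{-\frac12}\abss{F_1}_{L^1_xL^2_t}=\abss{F_1}_{Z_N}$. Combining all three contributions gives $\abss{u}_{\dot X^{0,\frac12,\infty}}\lesssim \abss{u(0)}_{L^2_x}+\abss{F}_{Y_N}=\abss{u}_{X_N}$.

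\textbf{Expected obstacle.} There is essentially no serious obstacle here; the only delicate point is the distributional identification $(-\tau-\xi^2)\widetilde u=\widetilde F$, which is uniquely determined on $A_M$ precisely because $A_M$ sidesteps the paraboloid. The slab-measure bound $M/N$ encodes exactly the $\frac12$-derivative gain built into the smoothing estimate \eqref{stri2}, which is why the $Z_N$-weight $N^{-\frac12}$ matches the $\dot X^{0,\frac12,\infty}$-weight $M^{\frac12}$ cleanly, without any coupling between the modulation and frequency scales to track.
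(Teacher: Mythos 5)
Your argument is correct and boils down to the same computation as the paper's, but you arrive at it by a more direct route. The paper recycles the decomposition from Lemma~3.4: it writes the $Z_N$ contribution as $\int w_y\,dy$ with $\widetilde w_y=\psi_N(\xi)\widehat F_1(y,\tau)/(-\tau-\xi^2-i0)$, splits $\int_0^t=\int_{-\infty}^t-e^{it\Delta}\int_{-\infty}^0$ so that the $-i0$ regularization appears cleanly, discards the free pieces, and then integrates directly over $A_M$. You instead take the distributional identity $(-\tau-\xi^2)\widetilde u=\widetilde F$ as the starting point and observe that every kernel element of multiplication by $\tau+\xi^2$ lives on the paraboloid and is therefore invisible to $L^2(A_M)$; this lets you divide by $\tau+\xi^2$ a.e.\ on $A_M$ without ever writing the Duhamel integral. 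Your slab-measure bound $|\{|\xi|\sim N,\,|\tau+\xi^2|\sim M\}|\lesssim M/N$, fed through Minkowski in $x$ and Plancherel in $t$, is exactly the paper's change of variables in disguise, and the $\dot X^{0,-\frac12,1}$ contribution is a one-liner ($\sup\leq\sum$) in both proofs. The one place you should be a bit more explicit: justify that $u$ is a tempered distribution so that $\widetilde u$ and the pointwise division on $A_M$ are legitimate — e.g.\ via $\|u\|_{L^\infty_tL^2_x}\lesssim\|u\|_{X_N}$ from \eqref{striX}; the paper gets this for free from its explicit $w_y$ representation. Also note that the $\|u(0)\|_{L^2_x}$ term in your final display is harmless but superfluous, since you already showed the free piece contributes zero.
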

\begin{proof}
	Consider the Duhamel's formula of $u$. 
	\begin{equation}u(x,t)=e^{it\Delta}u_0-i\int_0^t e^{i(t-s)\Delta} F_1(s) \ ds-i\int_0^t e^{i(t-s)\Delta} F_2(s) \ ds,\end{equation}
	where $F_1\in Z_N$ and $F_2\in \dot{X}^{0,-\frac{1}{2},1}$. For $i=1,2$, we split the term
	\[\int_0^t e^{i(t-s)\Delta} F_i(s) \ ds=\int_{-\infty}^t e^{i(t-s)\Delta} F_i(s) \ ds-e^{it\Delta}\int_{-\infty}^0 e^{-is\Delta} F_i(s) \ ds.\]
	Since the $\dot{X}^{0,\frac{1}{2},\infty}$ seminorm vanishes on any free solution, it suffices to estimate the first term. For $F_1$, we recall the computation \eqref{inhom} from the proof of \Cref{lem1} that the first term is equal to
	\[\int w_y \ dy \ \ \ \text{ where } \ \ \  \widetilde{w}_y = \frac{\psi_N(\xi)}{-\tau-\xi^2-i0}\widehat{F}_1(y,\tau).\]
	With a direct integration, we see that
	\begin{equation*}
	\begin{split}
	\abss{\chi_{A_M}\widetilde{w}}_{L^2_{x,\tau}} &\sim\frac{1}{N^{\frac{1}{2}}}\Big(\int\int_{\xi\sim N} \frac{\abs{\xi}}{(\tau+\xi^2)^2}\chi_{A_M}[\widehat{F}_1(y,\tau)]^2 \ d\xi d\tau\Big)^{\frac{1}{2}} \\
	&\lesssim \frac{1}{N^{\frac{1}{2}}M^{\frac{1}{2}}}\abss{F_{1}(y,t)}_{L^2_t},
	\end{split}
	\end{equation*}
	From the definition of $\dot{X}^{0,\frac{1}{2},\infty}$, it follows that
	\[   \Big\| \int_{-\infty}^t e^{i(t-s)\Delta} F_1(s) \ ds \Big\|_{\dot{X}^{0,\frac{1}{2},\infty}} \lesssim \abss{F_1}_{Z_N}. \]
	On the other hand, we consider the space-time Fourier transform 
	\begin{equation*}
	\begin{split}
	\mathcal{F}\int \chi_{(0,\infty)}(t-s) e^{i(t-s)\Delta} F_{2}(s) \ ds &=\frac{\widetilde{F}_{2,M}(\xi,\tau)}{-\tau-\xi^2-i0}.
	\end{split}
	\end{equation*} 
	It follows from the Plancherel's theorem that
	\[\Big\| \int_{-\infty}^t e^{i(t-s)\Delta} F_2(s) \ ds\Big\|_{\dot{X}^{0,\frac{1}{2},\infty}}\lesssim \abss{F_2}_{\dot{X}^{0,-\frac{1}{2},1}},\]
	and the conclusion immediately follows.
\end{proof}
\noindent
We are ready to proof the multilinear estimate. Note that the position of complex conjugates will be significant in the analysis below.
\begin{theorem} \label{thmnon3} For $1\leq i \leq 5$, let $u_i$ represent $u$ or $\bar{u}$. Then we have the following estimates. \\ 
	1). For any $u \in X^{\frac{1}{4}}$,
	\begin{align}
	\Big\| \partial_x\prod_{i=1}^{5}u_i\Big\|_{\dot{Y}^{\frac{1}{4}}} &\lesssim \abss{u}^5_{\dot{X}^{\frac{1}{4}}}, \label{linn4} 
	\end{align}
	2). Let $s\geq \frac{1}{4}$. For any $u \in X^{s}$,
	\begin{align}
	\Big\| \partial_x\prod_{i=1}^{5}u_i\Big\|_{Y^s} &\lesssim \abss{u}^5_{X^s}. \label{linn3}
	\end{align}
\end{theorem}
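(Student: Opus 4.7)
The plan is to adapt the dyadic framework of \Cref{thmnon2} to the critical exponent $s_0 = 1/4$, inserting modulation-frequency analysis to recover the logarithmic loss that occurs at this threshold. I would decompose each $u_i$ into Littlewood--Paley pieces, assume without loss of generality $N_1 \geq N_2 \geq \cdots \geq N_5$ while tracking which $P_{N_i} u_i$ carries the conjugate (this will matter in the final case), observe $N \lesssim N_1$ from frequency support, and, setting $c_{N_1,1} := N_1^{s}\abss{P_{N_1} u_1}_{X_{N_1}}$ and $c_{N_i,i} := N_i^{1/4}\abss{P_{N_i} u_i}_{X_{N_i}}$ for $2 \le i \le 5$, reduce the problem to the square-summed estimate
\begin{equation*}
\sum_{N} N^{2s+1} \sum_{N_1 \geq \cdots \geq N_5} \Big\| P_{N} \prod_{i=1}^{5} P_{N_i} u_i \Big\|_{Y_N}^{2} \lesssim \abss{u_1}_{\dot{X}^{s}}^{2} \prod_{i=2}^{5} \abss{u_i}_{\dot{X}^{1/4}}^{2}.
\end{equation*}
Both \eqref{linn4} and \eqref{linn3} follow from this single estimate applied with $s = 1/4$ and with $s = 0$, exactly as in the passage from \eqref{linn1} to \eqref{linn2}.

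The ``non-diagonal'' frequency regimes $N_1 \gg N_2$ and $N_1 \sim N_2 \gg N_3$ are handled verbatim as in \Cref{thmnon2}, but with the $X_N$-adapted bilinear estimate \eqref{bi25} in place of \eqref{bi2}: apply \eqref{bi25} to one high--low pair to extract a factor $N_1^{-1/2}$, distribute the remaining factors among the Strichartz norms $L^\infty_x L^2_t$ and $L^4_x L^\infty_t$ that come for free from $X_N$, and sum by Cauchy--Schwarz. The derivative on $\prod u_i$ is absorbed by the $N^{1/2}$ already counted in $\abss{\cdot}_{Y_N}$.

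The difficult case is the fully diagonal one, $N_1 \sim N_2 \sim \cdots \sim N_5 =: M$ with $N \lesssim M$. Distributing by H\"older into one $L^\infty_x L^2_t$ and four $L^4_x L^\infty_t$ yields a bound that is exactly logarithmically critical at $s = 1/4$, which is why modulation analysis becomes necessary. I would introduce the resonance function $\Omega$ associated to the conjugation pattern $(\epsilon_1,\ldots,\epsilon_5) \in \{\pm1\}^5$ (with $\epsilon_i = +1$ for a factor $u$ and $\epsilon_i = -1$ for $\bar u$), dichotomize according to whether $\abs{\Omega} \gtrsim M^2$ or $\abs{\Omega} \ll M^2$, and split further by whether the largest modulation is on the output or on one of the inputs. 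In the high-modulation regimes I recover an extra factor of $M^{-1}$ by using the $\dot{X}^{0,-1/2,1}$ piece of $Y_N$ (for the output) or the embedding \eqref{xsb} into $\dot{X}^{0,1/2,\infty}$ (for the relevant input), which converts the divergent logarithm into a convergent geometric series in $M$.

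The main obstacle is the small-$\Omega$ (coherent) subregion, where the interplay between the conjugation pattern and the constraint $\xi^{2} \approx \sum_i \epsilon_i \xi_i^{2}$ forces a delicate case-by-case analysis. Following Tao's treatment of the quartic gKdV resonance in \cite{MR2286393}, I would parameterize this resonant subvariety explicitly for each pattern $(\epsilon_i)$, exploit the transversality (or the small measure) of the constraint to recover the missing $M^{-1}$ through a direct $L^2_{\xi,\tau}$ computation in the spirit of the proof of \Cref{lem1}, and then close the geometric summation in $M$. Once the all-comparable case is dispatched, the proof of \eqref{linn4} is complete, and the inhomogeneous version \eqref{linn3} follows by combining the estimate at $s = 0$ with the one at the given $s \geq 1/4$, as in \Cref{thmnon2}.
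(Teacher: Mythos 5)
Your toolbox is the right one, but you have misdiagnosed where the difficulty actually lies, and this creates a genuine structural gap. The all-comparable regime $N_1\sim N_2\sim\cdots\sim N_5$ that you flag as ``exactly logarithmically critical'' is in fact the \emph{easy} case. There, every $c_{N_i,i}$ carries the same dyadic parameter $M\sim N_1$, so Hölder into $L^\infty_x L^2_t\times (L^4_x L^\infty_t)^4$ followed by Cauchy--Schwarz on the single scale $M\gtrsim N$ closes cleanly: the sum $\sum_N N^{2s+1}\sum_{M\gtrsim N}M^{-2s-1}c_{M,1}^2$ converges geometrically since $2s+1>0$ — there is no log. The logarithmic divergence you are anticipating actually appears in the regime $N_1\sim N_2\sim N_3\sim N_4\gg N_5$ with $N\lesssim N_1$, where the low frequency $N_5$ is decoupled from the four high ones, so putting $P_{N_5}u_5$ into $L^\infty_{x,t}$ (or $L^4_xL^\infty_t$) and summing over $N_5\leq N_1$ costs $\sum_{N_5\leq N_1}1$ unless one extracts a decay factor like $(N_5/N_1)^\alpha$. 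That is the regime where the modulation analysis, the $\dot X^{0,-1/2,1}$ component of $Y_N$, the embedding \eqref{xsb}, and the Tao quartic-gKdV ideas are genuinely needed. Your tripartite case list ($N_1\gg N_2$; $N_1\sim N_2\gg N_3$; all comparable) also omits the intermediate regimes $N_1\sim N_2\sim N_3\gg N_4$ and $N_1\sim N_2\sim N_3\sim N_4\gg N_5$, so the decomposition is not exhaustive.

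Two further points. First, in the hard four-comparable regime the paper's argument is conjugation-sensitive in a concrete way you do not capture: when the conjugates are balanced (e.g.\ $u\bar u u\bar u$) a bilinear estimate \eqref{bi25}/\eqref{bi3} applied to a suitable high--low or high--high pair supplies the missing $(N_5/N_1)^{1/4}$ directly, with no modulation analysis; modulation only enters for the imbalanced patterns $u_1=u_2=u_3=u$ (or $=\bar u$). Second, your description of the coherent ($|\Omega|\ll M^2$) region — ``parameterize the resonant subvariety explicitly and exploit transversality'' — is too vague to carry weight. The paper's actual resolution is more elementary and more specific: after discarding the high-modulation alternatives via \eqref{xsb} and $\dot X^{0,-1/2,1}$, it splits each high-frequency factor by sign of frequency using Riesz projections $P^\pm$ and observes by direct algebra that in the surviving imbalanced combinations (e.g.\ $(\prod_{i=1}^3 P^+_{N_i}u)P^-_{N_4}\bar u\,P_{N_5}u_5$) the output's space-time Fourier support lies at distance $\gtrsim N_1^2$ from the parabola $\tau=-\xi^2$, so it can be placed in $\dot X^{0,-1/2,1}$ with an $N_1^{-1}$ gain. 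Recasting your proposal around Case IV (rather than the diagonal case) and replacing the transversality hand-wave with this explicit $P^\pm$ dichotomy would bring it in line with a correct proof.
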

\begin{proof}
	As before, our goal is to obtain the estimate
	\begin{equation}\label{goal1}
	\sum_N N^{2s+2}\Big\|P_N\prod_{i=1}^{5}u_i\Big\|^2_{Y_N} \lesssim  \abss{u}^2_{\dot{X}^{s}}\abss{u}^8_{\dot{X}^{\frac{1}{4}}}.
	\end{equation}
	First, we split each term in the left-hand side as the sum of all possible frequency interactions:
	\begin{equation*}
	\begin{split}
	N^{2s+2}\Big\|P_N\partial_x\prod_{i=1}^5 u_i\Big\|^2_{Y_N} 
	&\lesssim N^{2s+2}\sum_{N_1,\ldots,N_5}\Big\|P_N\prod_{i=1}^{5}P_{N_i}u_i\Big\|^2_{Y_N}.
	\end{split}
	\end{equation*}
	Assume that $N_1 \geq N_2 \geq \ldots \geq N_5$. Define $c_{N_1,1}=N_1^s\abss{P_{N_1}u}_{X_{N_1}}$ and $c_{N_i,i}=N_i^{\frac{1}{4}}\abss{P_{N_i}u}_{X_{N_i}}$ for $2\leq i \leq 5$.  We make a slight abuse of notation by using $\sum_{N_i}$ for the summation over all possible $N_1,N_2,\ldots ,N_5$ when the restrictions on these numbers are clear. We also will be using the Cauchy-Schwarz inequality \eqref{cs} and Young's inequality \eqref{yi}.\\
	\\
	We split the left-hand side of \eqref{goal1} over four different kinds of frequency interactions:
	\begin{equation*}
	\begin{split}
	&\sum_{N,N_1,\ldots,N_5}N^{s}\Big\|P_N(P_{N_1}\partial_x u_1\prod_{i=2}^{5}P_{N_i}u_i)\Big\|_{Y_N} \\&= \Big(\sum_{I}+\sum_{II}+\sum_{III}+\sum_{IV}\Big)N^{s}\Big\|P_N(P_{N_1}\partial_x u_1\prod_{i=2}^{5}P_{N_i}u_i)\Big\|_{Y_N}.
	\end{split}
	\end{equation*}
	Each of the summations contains certain ranges of $N,N_1,\ldots, N_5$ described by the following cases: \\
	\begin{enumerate}[label=$\Roman*).$,wide, labelwidth=!, labelindent=0pt]
		\item $N\lesssim N_1\sim N_2\sim N_3\sim N_4\sim N_5.$ \\
		By H{\"o}lder  and Cauchy-Schwarz inequalities, we have
		\begin{align*}
		\Big\|P_N\prod_{i=1}^{5}u_i\Big\|_{L^1_xL^2_t} &\lesssim \sum_{N_i}\abss{P_{N_1}u_1}_{L^{\infty}_xL^2_t}\prod_{i=2}^{5}\abss{P_{N_i}u_i}_{L^{4}_xL^{\infty}_t} \\
		&= \sum_{N_i}\frac{1}{N_1^{s+\frac{1}{2}}} \prod_{i=1}^{5}c_{N_i,i} \\
		&\lesssim \Big(\sum_{N_1\gtrsim N}\frac{1}{N_1^{2s+1}}c^2_{N_1,1}\Big)^{\frac{1}{2}}\abss{u}^4_{\dot{X}^{\frac{1}{4}}}.
		\end{align*}
		Summing over $N\in 2^{\mathbb{Z}}$, we see that
		\begin{align*}
		\sum_I N^{2s+1}\Big\|P_N\prod_{i=1}^{5}u_i\Big\|^2_{L^1_xL^2_t} 
		&\lesssim \sum_{N_1}\sum_{N\lesssim N_1}\left(\frac{N}{N_1}\right)^{2s+1}c^2_{N_1,1}\abss{u}^8_{\dot{X}^{\frac{1}{4}}} \\
		&\lesssim \abss{u}^2_{\dot{X}^{s}}\abss{u}^8_{\dot{X}^{\frac{1}{4}}}.
		\end{align*}
		\item $N\sim N_1 \gg N_2\geq N_3\geq N_4\geq N_5.$ \\
		By the bilinear estimate \eqref{bi25} or \eqref{bi3} on $P_{N_1}u_1P_{N_2}u_2$ (depending on the complex conjugates) and Bernstein inequality on $P_{N_5}u_5$, we have that for each fixed $N$,
		\begin{align*}
		\Big\|P_N&\prod_{i=1}^{5}u_i\Big\|_{L^1_xL^2_t} \\ 
		&\lesssim \sum_{N_i}\abss{P_{N_1}u_1P_{N_2}u_2}_{L^2_{x,t}}\prod_{i=3}^{4}\abss{P_{N_i}u_i}_{L^{4}_xL^{\infty}_t}\abss{P_{N_5}u_5}_{L^{\infty}_{x,t}} \\
		&\lesssim \sum_{N_i}\frac{N_5^{\frac{1}{2}}}{N_1^{\frac{1}{2}}}\abss{P_{N_1}u_1}_{X_{N_1}}\abss{P_{N_2}u_2}_{X_{N_2}}\prod_{i=3}^{4}\abss{P_{N_i}u_i}_{L^{4}_xL^{\infty}_t}\abss{P_{N_5}u_5}_{L^{\infty}_{t}L^2_x} \\
		&= \sum_{N_i}\frac{1}{N_1^{s+\frac{1}{2}}}\left(\frac{N_5}{N_2}\right)^{\frac{1}{4}} \prod_{i=1}^{5}c_{N_i,i}. \\
		\intertext{By Young's inequality \eqref{yi}, this term is bounded by} 
		&\lesssim \sum_{N_1\sim N}\frac{1}{N_1^{s+\frac{1}{2}}}c_{N_1,1}\abss{u}^4_{\dot{X}^{\frac{1}{4}}}.
		\end{align*}
		Therefore,
		\begin{equation*}
		\begin{split}
		\sum_{II} N^{2s+1}\Big\|P_N\prod_{i=1}^{5}u_i\Big\|^2_{L^1_xL^2_t} 
		&\lesssim \sum_{N}\Big(\sum_{N_1\sim N}\left(\frac{N}{N_1}\right)^{s+\frac{1}{2}}c_{N_1,1}\Big)^2\abss{u}^8_{\dot{X}^{\frac{1}{4}}} \\
		&\lesssim \Big(\sum_{N_1}\sum_{N\sim N_1}c^2_{N_1,1}\Big)\abss{u}^8_{\dot{X}^{\frac{1}{4}}} \\
		&\sim\abss{u}^2_{\dot{X}^{s}}\abss{u}^8_{\dot{X}^{\frac{1}{4}}}.
		\end{split}
		\end{equation*}
		\item $N\lesssim N_1 \sim N_2\sim N_{j-1} \gg N_j \geq N_5$ where $j=3$ or $j=4$. \\
		This is similar to case $II)$, but instead we use the bilinear estimate on $P_{N_1}u_1P_{N_j}u_j$. 
		\begin{align*}
		\Big\|P_N&\prod_{i=1}^{5}u_i\Big\|_{L^1_xL^2_t}  \\ 	&\lesssim \sum_{N_i}\abss{P_{N_1}u_1P_{N_j}u_j}_{L^2_{x,t}}\prod_{\substack{2\leq i\leq 4 \\ i\not= j}}\abss{P_{N_i}u_i}_{L^{4}_xL^{\infty}_t}\abss{P_{N_5}u_5}_{L^{\infty}_{x,t}} \\
		&\lesssim \sum_{N_i}\frac{N_5^{\frac{1}{2}}}{N_1^{\frac{1}{2}}}\abss{P_{N_1}u_1}_{X_{N_1}}\abss{P_{N_j}u_j}_{X_{N_2}}\prod_{\substack{2\leq i\leq 4 \\ i\not= j}}\abss{P_{N_i}u_i}_{L^{4}_xL^{\infty}_t}\abss{P_{N_5}u_5}_{L^{\infty}_{t}L^2_x} \\
		&\lesssim \sum_{N_i}\frac{1}{N_1^{s+\frac{1}{2}}}\left(\frac{N_5}{N_j}\right)^{\frac{1}{4}}\prod_{i=1}^{5}c_{N_i,i}. 
		\end{align*}
		Applying the Cauchy-Schwarz inequality \eqref{cs} on $\prod_{i=1}^{j-1}c_{N_i,i}$ and Young's inequality \eqref{yi} on $\prod_{i=j}^{5}c_{N_i,i}$, we see that
		\begin{equation*}
		\sum_{N_i}\frac{1}{N_1^{s+\frac{1}{2}}}\left(\frac{N_5}{N_j}\right)^{\frac{1}{4}}\prod_{i=1}^{5}c_{N_i,i} \lesssim \Big(\sum_{N_1\gtrsim N}\frac{1}{N_1^{2s+1}}c_{N_1,1}^2\Big)^{\frac{1}{2}}\abss{u}^8_{\dot{X}^{\frac{1}{4}}}
		\end{equation*}
		Therefore,
		\begin{align*}
		\sum_{III} N^{2s+1}\Big\|P_N\prod_{i=1}^{5}u_i\Big\|^2_{L^1_xL^2_t} 
		&\lesssim \Big(\sum_{N}\sum_{N_1\gtrsim N}\left(\frac{N}{N_1}\right)^{2s+1}c^2_{N_1,1}\Big)\abss{u}^8_{\dot{X}^{\frac{1}{4}}}  \\
		&\sim\abss{u}^2_{\dot{X}^{s}}\abss{u}^8_{\dot{X}^{\frac{1}{4}}}.
		\end{align*}
		\item $N\lesssim N_1\sim N_2\sim N_3\sim N_4 \gg N_5$. \\
		In this case, we will take the number of complex conjugates in $u_1u_2u_3u_4$ into consideration. Note that the positions of conjugates does not matter here. \\
		\begin{enumerate}[label=$\arabic*)$.,wide, labelwidth=!]
			\item $u_1=u_3=u$ and $u_2=u_4=\bar{u}$.
			We divide into further subcases by comparing the sizes between $N$ and $N_5$. \\
			\begin{enumerate}[label=1.\arabic*). ,wide, labelwidth=!]
				\item $N \sim N_5$. \\
                    In this case, we first use H{\"o}lder inequality and then apply the bilinear estimate \eqref{bi3} on $\abss{P_{N_1}u_1P_{N_5}u_5}_{L^2_{x,t}}$
				\begin{align*}
				\Big\|P_N&\prod_{i=1}^{5}u_i\Big\|_{L^1_xL^2_t} \\ 	&\lesssim \sum_{N_i}\abss{P_{N_1}u_1P_{N_5}u_5}_{L^2_{x,t}}\prod_{i=2}^{3}\abss{P_{N_i}u_i}_{L^{4}_xL^{\infty}_t}\abss{P_{N_4}u_4}_{L^{\infty}_{x,t}} \\
				&\lesssim \sum_{N_i}\frac{N_4^{\frac{1}{2}}}{N_1^{\frac{1}{2}}}\abss{P_{N_1}u_1}_{X_{N_1}}\abss{P_{N_5}u_5}_{X_{N_5}}\prod_{i=2}^{3}\abss{P_{N_i}u_i}_{L^{4}_xL^{\infty}_t}\abss{P_{N_4}u_4}_{L^{\infty}_{t}L^2_x} \\
				&\lesssim \sum_{N_i} \frac{N_4^{\frac{1}{4}}}{N_1^{s+\frac{1}{2}}N_5^{\frac{1}{4}}}\prod_{i=1}^5 c_{N_i,i} \\
				&\sim \sum_{N_i} \frac{1}{N_1^{s+\frac{1}{4}}N^{\frac{1}{4}}}\prod_{i=1}^5 c_{N_i,i} \\
				&\lesssim \Big(\sum_{N_1 \gtrsim N}\frac{1}{N_1^{2s+\frac{1}{2}}N^{\frac{1}{2}}} c_{N_1,1}^2\Big)^{\frac{1}{2}}\abss{u}_{\dot{X}^{\frac{1}{4}}}^4,
				\end{align*}
				where we used Cauchy-Schwarz, the fact that $N_4\sim N_1$, $N\sim N_5$ and the trivial inequality $c_{N_5,5} \leq \abss{u}_{\dot{X}^{\frac{1}{4}}}$ in the last step. Consequently, 
				\begin{align*}
				\sum_{\substack{IV \\ N\sim N_5}} N^{2s+1}\Big\|P_N\prod_{i=1}^{5}u_i\Big\|^2_{L^1_xL^2_t} 
				&\lesssim \Big(\sum_{N}\sum_{N_1\gtrsim N}\left(\frac{N}{N_1}\right)^{2s+\frac{1}{2}}c^2_{N_1,1}\Big)\abss{u}^8_{\dot{X}^{\frac{1}{4}}} \\
				&\sim\abss{u}^2_{\dot{X}^{s}}\abss{u}^8_{\dot{X}^{\frac{1}{4}}}.
				\end{align*}
				\item $N\gg N_5$. \\
				We split $\prod_{i=1}^{5}P_{N_i}u_i$ into four terms using low and high frequency projections.
				\begin{equation*}
				\begin{split}
				P_{N_1}u_1P_{N_2}u_2 &= P_{\ll N}(P_{N_1}u_1P_{N_2}u_2)+P_{\gtrsim N} (P_{N_1}u_1P_{N_2}u_2), \\
				P_{N_3}u_3P_{N_4}u_4 &= P_{\ll N}(P_{N_3}u_3P_{N_4}u_4)+P_{\gtrsim N} (P_{N_3}u_3P_{N_4}u_4).
				\end{split}
				\end{equation*}
				Since $N\gg N_5$, so $\prod_{i=1}^{4}P_{N_i}u_i$ must be at frequency $\gg N$. Thus, we can assume that each of the resulting terms after the splits contains at least one high frequency projection. Thus, it suffices to estimate the term: 
                \[P_{\gtrsim N} (P_{N_1}u_1P_{N_2}u_2)\prod_{i=3}^{5}P_{N_i}u_i.\]
                We start by applying the bilinear estimate \eqref{bi25} on $P_{\gtrsim N} (P_{N_1}u_1P_{N_2}u_2)$,
				\begin{equation}\label{nn5}
				\abss{P_{\gtrsim N} (P_{N_1}u_1P_{N_2}u_2)}_{L^2_{x,t}} \lesssim \frac{1}{N^{\frac{1}{2}}}\abss{P_{N_1}u}_{X_{N_1}} \abss{P_{N_2}u}_{X_{N_2}}.
				\end{equation}
				Then, by applying the estimate \eqref{cs} on $c_{N_1,1}c_{N_3,3}c_{N_4,4}$ and \eqref{yi} on $c_{N_2,2}c_{N_5,5}$, we obtain
				\begin{align*}
				\Big\|P_N&\prod_{i=1}^{5}u_i\Big\|_{L^1_xL^2_t} \\
				&\lesssim \sum_{N_i}\abss{P_{\gtrsim N}(P_{N_1}u_1P_{N_2}u_2)}_{L^2_{x,t}}\prod_{i=3}^{4}\abss{P_{N_i}u_i}_{L^{4}_xL^{\infty}_t}\abss{P_{N_5}u_5}_{L^{\infty}_{x,t}} \\
				&\lesssim \sum_{N_i}\frac{N_5^{\frac{1}{2}}}{N^{\frac{1}{2}}}\abss{P_{N_1}u}_{X_{N_1}} \abss{P_{N_2}u}_{X_{N_2}}\prod_{i=3}^{4}\abss{P_{N_i}u_i}_{L^{4}_xL^{\infty}_t}\abss{P_{N_5}u_5}_{L^{\infty}_{t}L^2_x} \stepcounter{equation}\tag{\theequation}\label{nnn5} \\ 
				&\lesssim \sum_{N_i N}\frac{1}{N^{\frac{1}{2}}N_1^{s}} \left(\frac{N_5}{N_2}\right)^{\frac{1}{4}}\prod_{i=1}^{5}c_{N_i,i} \\
				&\sim \sum_{N_i N}\frac{1}{N^{\frac{1}{2}}N_1^{s}} \left(\frac{N_5}{N_3}\right)^{\frac{1}{4}}\prod_{i=1}^{5}c_{N_i,i} \\
				&\lesssim \Big(\sum_{N_1\gtrsim N}\frac{1}{NN_1^{2s}}c^2_{N_1,1}\Big)^{\frac{1}{2}}\abss{u}^4_{\dot{X}^{\frac{1}{4}}}.,
				\end{align*}
				where we used Cauchy-Schwarz on $\sum_{N_i} \frac{1}{N^{\frac{1}{2}}N_1^s}c_{N_1,1}c_{N_2,2}$ and Young's inequality on $\sum_{N_i} \Big(\frac{N_5}{N_3}\Big)^{\frac{1}{4}}c_{N_3,3}c_{N_4,4}c_{N_5,5}$. Therefore,
				\begin{align*}
				\sum_{\substack{IV \\ N\gg N_5}} N^{2s+1}\Big\|P_N\prod_{i=1}^{5}u_i\Big\|^2_{L^1_xL^2_t} 
				&\lesssim \Big(\sum_{N}\sum_{N_1\gtrsim N}\left(\frac{N}{N_1}\right)^{2s}c^2_{N_1,1}\Big)\abss{u}^8_{\dot{X}^{\frac{1}{4}}} \\
				&\sim\abss{u}^2_{\dot{X}^{s}}\abss{u}^8_{\dot{X}^{\frac{1}{4}}}.
				\end{align*}
				\item $N\ll N_5$. \\
				This is similar to case 1.2), but we split $\prod_{i=1}^{5}P_{N_i}u_i$ at $N_5$ instead of $N$.
				\begin{equation*}
				\begin{split}
				P_{N_1}u_1P_{N_2}u_2 &= P_{\ll N_5}(P_{N_1}u_1P_{N_2}u_2)+P_{\gtrsim N_5} (P_{N_1}u_1P_{N_2}u_2), \\
				P_{N_3}u_3P_{N_4}u_4 &= P_{\ll N_5}(P_{N_3}u_3P_{N_4}u_4)+P_{\gtrsim N_5} (P_{N_3}u_3P_{N_4}u_4).
				\end{split}
				\end{equation*}
				Since the output is supported at frequency $N\ll N_5$, we can see that $\prod_{i=1}^{4}P_{N_i}u_i$ must be supported at frequency $\sim N_5$. Thus, we can assume that each term in the product expansion contains at least one high frequency projection. To estimate the product, we can use \eqref{nn5} and \eqref{nnn5} that we just obtained and replace $N^{-\frac{1}{2}}$ by $N_5^{-\frac{1}{2}}$.
				\begin{equation*}
				\begin{split}
				\Big\|P_N\prod_{i=1}^{5}u_i\Big\|_{L^1_xL^2_t} 
				&\lesssim \sum_{N_i}\frac{1}{N_5^{\frac{1}{2}}N_1^{s}} \left(\frac{N_5}{N_3}\right)^{\frac{1}{4}}\prod_{i=1}^{5}c_{N_i,i} \\
				&\ll \sum_{N_i}\frac{1}{N^{\frac{1}{2}}N_1^{s}} \left(\frac{N_5}{N_3}\right)^{\frac{1}{4}}\prod_{i=1}^{5}c_{N_i,i} \\
				&\lesssim \Big(\sum_{N_1\gtrsim N}\frac{1}{NN_1^{2s}}c^2_{N_1,1}\Big)^{\frac{1}{2}}\abss{u}^4_{\dot{X}^{\frac{1}{4}}},
				\end{split}
				\end{equation*}
				which leads to the same result as in the previous case.\\
			\end{enumerate}
			\item $u_1=u_2=u_3=u$, $u_4$ and $u_5$ can be either $u$ or $\bar{u}$. \\
			This is the hardest case and requires some frequency-modulation analysis. Suppose that for some $1\leq j\leq 5$ the space-time Fourier transform of $P_{N_j}u$  is supported in the set
			\begin{subequations}\label{set}
				\begin{align}
				\{(\xi,\tau) &: \abs{\tau+N_1^2}>\frac{1}{32}N_1^2\}, \label{set11}\\
				\intertext{or that of $P_{N_j}\bar{u}$ (for $4\leq j \leq 5$) is supported in the set}
				\{(\xi,\tau) &: \abs{\tau-N_1^2}>\frac{1}{32}N_1^2\}. \label{set12}
				\end{align}
			\end{subequations}
			Then, \eqref{xsb} yields
			\[\abss{P_{N_j}u_j}_{L^2_{x,t}}\lesssim N_1^{-1}\abss{P_{N_j}u_j}_{\dot{X}^{0,\frac{1}{2},\infty}}\lesssim N_1^{-1}\abss{P_{N_j}u_j}_{X_{N_j}}. \]
			Without loss of generality, assume that $j=1$. Then by H{\"o}lder and Bernstein inequalities,
			\begin{align*}
			\Big\|P_N\prod_{i=1}^{5}P_{N_i}u_i\Big\|_{L^1_xL^2_t} 
			&\lesssim \abss{P_{N_1}u_1}_{L^2_{x,t}}\prod_{i=2}^{3}\abss{P_{N_i}u_i}_{L^4_xL^{\infty}_t}\prod_{i=4}^{5}\abss{P_{N_i}u_i}_{L^{\infty}_{x,t}} \\
			&\lesssim \frac{1}{N_1^{s+\frac{1}{2}}}\left(\frac{N_4N_5}{N^2_1}\right)^{\frac{1}{4}}\prod_{i=1}^{5}c_{N_i,i} \\
			&\sim  \frac{1}{N_1^{s+\frac{1}{2}}}\left(\frac{N_5}{N_3}\right)^{\frac{1}{4}}\prod_{i=1}^{5}c_{N_i,i} .
			\end{align*}
			On the other hand, if the space-time Fourier transform of $P_{N_5}u_5$ is supported in the set \eqref{set11} in the case $u_5=u$ or \eqref{set12} in the case $u_5 = \bar{ u}$, then we have
			\begin{align*}
			\Big\|P_N\prod_{i=1}^{5}P_{N_i}u_i\Big\|_{L^1_xL^2_t}
			&\lesssim \prod_{i=1}^2\abss{P_{N_i}u_i}_{L^4_xL^{\infty}_t}\abss{P_{N_3}u_3P_{N_4}u_4P_{N_5}u_5}_{L^{2}_{x,t}} \\ 
			&\lesssim \prod_{i=1}^{2}\abss{P_{N_i}u_i}_{L^4_xL^{\infty}_t}\prod_{i=3}^{4}\abss{P_{N_i}u_i}_{L^{\infty}_{t}L^4_x}\abss{P_{N_5}u_5}_{L^2_{t}L^{\infty}_x} \\
			&\lesssim N_5^{\frac{1}{2}}\prod_{i=1}^{4}\abss{P_{N_i}u_i}_{L^4_xL^{\infty}_t}\abss{P_{N_5}u_5}_{L^2_{x,t}} \\
			&\lesssim \frac{N_5^{\frac{1}{4}}}{N_1^{s+\frac{3}{4}}}\prod_{i=1}^{5}c_{N_i,i} \\
			&\sim \frac{1}{N_1^{s+\frac{1}{2}}}\left(\frac{N_5}{N_3}\right)^{\frac{1}{4}}\prod_{i=1}^{5}c_{N_i,i}.
			\end{align*}
			We then get the desired result by observing that
			\[\frac{1}{N_1^{s+\frac{1}{2}}}\left(\frac{N_5}{N_3}\right)^{\frac{1}{4}}\prod_{i=1}^{5}c_{N_i,i} \lesssim \Big(\sum_{N_1\gtrsim N}\frac{1}{N_1^{2s+1}}c^2_{N_1,1}\Big)^{\frac{1}{2}}\abss{u}^4_{\dot{X}^{\frac{1}{4}}}.\]
			Thus, we can assume that the space-time Fourier transform of $P_{N_j}u$ is supported in the set
			\begin{subequations}\label{set2}
				\begin{align*}
				\{\xi,\tau &: \abs{\tau+N_1^2}\leq \frac{1}{32}N_1^2\}, \\
				\intertext{and that of $P_{N_k}\bar{u}$ is supported in}
				\{\xi,\tau &: \abs{\tau-N_1^2}\leq \frac{1}{32}N_1^2\}.
				\end{align*}
			\end{subequations}
			Here, we introduce Riesz transforms $P_+$ and $P_-$ defined by
			\[\widehat{P_+f}(\xi)=1_{\xi\geq 0}\hat{f}, \ \ \ \ \ \widehat{P_-f}(\xi)=1_{\xi< 0}\hat{f}.\]
			Then, denoting $P_+P_{N_i}:=P^+_{N_i}$ and $P_-P_{N_i}:=P^-_{N_i}$, for $1\leq i \leq 4$, we decompose $P_{N_i}u_i$ into
			\[P_{N_i}u_i=P^+_{N_i}u_i+P^-_{N_i}u_i,\]
			and consider all the terms that we get from $\prod_{i=1}^{5}P_{N_i}u_i$. For any term that contains $P^+_{N_j}uP^-_{N_k}u$, $P^+_{N_j}uP^+_{N_k}\bar{u}$ or $P^-_{N_j}uP^-_{N_k}\bar{u}$, where $1\leq j < k \leq 4$, we can apply the bilinear estimates \eqref{bi25} and \eqref{bi3}, then proceed with the H{\"o}lder's and Bernstein inequality on $L^1_xL^2_t$ as in the previous cases. For example, if  $j=1$ and $k=2$, then we have
			\begin{align*}
			\Big\|P_N(&P^+_{N_1}u_1P^-_{N_2}u_2\prod_{i=3}^{5}P_{N_i}u_i)\Big\|_{L^1_xL^2_t} \\
			&\lesssim \frac{N_5^{\frac{1}{2}}}{N_1^{\frac{1}{2}}}\prod_{i=1}^{2}\abss{P_{N_i}u}_{X_{N_i}}\prod_{i=3}^{4}\abss{P_{N_i}u_i}_{L^4_xL^{\infty}_{t}}\abss{P_{N_5}u_5}_{L^{\infty}_{t}L^2_x} \\
			&\lesssim \frac{1}{N_1^{s+\frac{1}{2}}}\left(\frac{N_5}{N_2}\right)^{\frac{1}{4}}\prod_{i=1}^{5}c_{N_i,i} \\
			&\sim  \frac{1}{N_1^{s+\frac{1}{2}}}\left(\frac{N_5}{N_3}\right)^{\frac{1}{4}}\prod_{i=1}^{5}c_{N_i,i},
			\end{align*}
			Therefore, it suffices to consider the following four terms. 
			\begin{enumerate}
				\item $(\prod_{i=1}^{3}P^+_{N_i}u)P^+_{N_4}uP_{N_5}u_5$
				\item $(\prod_{i=1}^{3}P^-_{N_i}u)P^-_{N_4}uP_{N_5}u_5$
				\item $(\prod_{i=1}^{3}P^+_{N_i}u)P^-_{N_4}\bar{u}P_{N_5}u_5$
				\item $(\prod_{i=1}^{3}P^-_{N_i}u)P^+_{N_4}\bar{u}P_{N_5}u_5$.
			\end{enumerate}
			In either case, simple algebra shows that the space-time Fourier transform of the product is supported at least $\gtrsim N_1^2$ away from the parabola $\tau=-\xi^2$. The worst case is (iii) with $u_5=u$ where the output's modulation is
			\[ (3N_1-N_1\pm N_5)^2-4N_1^2+N_1^2\sim N_1^2.\]
			Thus, we can put these products in the $\dot{X}^{0,-\frac{1}{2},1}$ space and get a good bound. For example, focusing on (iii), we use H{\"o}lder inequality, Bernstein inequality and the boundedness of Riesz transforms.
			\begin{align*}
			\Big\|P_N[(\prod_{i=1}^{3}&P^+_{N_i}u)P^-_{N_4}\bar{u}P_{N_5}u_5]\Big\|_{\dot{X}^{0,-\frac{1}{2},1}}  \\
			&\lesssim \frac{1}{N_1}\Big\|(\prod_{i=1}^{3}P^+_{N_i}u)P^-_{N_4}\bar{u}P_{N_5}u_5\Big\|_{L^2_{t,x}} \\
			&\lesssim \frac{(N_4N_5)^{\frac{1}{2}}}{N_1}\prod_{i=1}^{3}\abss{P_{N_i}u}_{L^6_{t,x}}\prod_{i=4}^{5}\abss{P_{N_i}u}_{L^{\infty}_{t}L^2_{x}} \\
			&\lesssim \frac{1}{N_1^{s+1}}\left(\frac{N_5}{N_1}\right)^{\frac{1}{4}}\prod_{i=1}^{5}c_{N_i,i} \\
			&\sim  \frac{1}{N_1^{s+1}}\left(\frac{N_5}{N_3}\right)^{\frac{1}{4}}\prod_{i=1}^{5}c_{N_i,i}\\
			&\lesssim \Big(\sum_{ N_1\gtrsim N}\frac{1}{N_1^{2s+2}}c^2_{N_1,1}\Big)^{\frac{1}{2}}\abss{u}^4_{\dot{X}^{\frac{1}{4}}}.
			\end{align*}
			Hence, by summing over $N$ and $N_i$'s, we have
			\begin{align*}
			\sum_{IV}N^{2s+2}\Big\|&P_N[(\prod_{i=1}^{3}P^+_{N_i}u)P^-_{N_4}\bar{u}P_{N_5}u_5]\Big\|^2_{\dot{X}^{0,-\frac{1}{2},1}} \\
			&\lesssim \sum_{N_1}\sum_{N\lesssim N_1}\left(\frac{N}{N_1}\right)^{2s+2}c^2_{N_1,1}\abss{u}^8_{\dot{X}^{\frac{1}{4}}} \\
			&\lesssim \abss{u}^2_{\dot{X}^{s}}\abss{u}^8_{\dot{X}^{s_0}},
			\end{align*}
			as desired. \\
			\item $u_1=u_2=u_3=\bar{u}$, $u_4$ and $u_5$ can be either $u$ or $\bar{u}$. \\
			The proof is the same as in the previous case. Note that we get a better result in the sense that the space-time Fourier support of $\prod_{i=1}^{5}P_{N_i}u_i$ when $\mathcal{F}_{x,t}u_i$ is supported in \eqref{set2} is $\gtrsim N_1^2$ away from the parabola $\tau=-\xi^2$ without relying on the Riesz transforms. This concludes the proof of the multilinear estimate.
		\end{enumerate}
	\end{enumerate}
\end{proof}
\section{The Proof of \texorpdfstring{\Cref{gwp1}}{Theorem 1.2}}\label{proofgwp}
\noindent
The proof is similar to what we did in \Cref{sec6} with the same function spaces:
\begin{equation}
\begin{split}\label{norm4}
\abss{u}_{X_N} & =\abss{u}_{L_t^{\infty}L_x^2}+N^{-\frac{1}{4}}\abss{u}_{L_x^{4}L_t^{\infty}}+N^{\frac{1}{2}}\abss{u}_{L_x^{\infty}L_t^2} \\
& \ \ \ +N^{-\frac{1}{2}}\abss{(i\partial_t+\Delta)u}_{L_x^1L_t^2} \\
\abss{u}_{\dot{X}^{s}} & =\Big(\sum_{N\in 2^{\mathbb{Z}}} N^{2s}\abss{P_Nu}^2_{X_N}\Big)^{\frac{1}{2}} \\
\abss{u}_{X^s} &= \abss{u}_{\dot{X}^0}+\abss{u}_{\dot{X}^s} \\
\abss{u}_{Y_N} &= N^{-\frac{1}{2}}\abss{u}_{L_x^1L_t^2} \\
\abss{u}_{\dot{Y}^{s}}&=\Big(\sum_{N\in 2^{\mathbb{Z}}} N^{2s}\abss{P_Nu}^2_{Y_N}\Big)^{\frac{1}{2}} \\
\abss{u}_{Y^s} &= \abss{u}_{\dot{Y}^0}+\abss{u}_{\dot{Y}^s} .
\end{split}
\end{equation}
\noindent
Now we state a multilinear estimate. The proof is shortened as it is similar to that of \Cref{thmnon2} for the most part.
\begin{theorem}  Suppose that $d\geq 5$. Let $s,r>\frac{1}{2}$ and $u_i \in X^{s}$ for $1\leq i \leq d$. Then we have the following estimate: 
	\begin{align}\label{multi}
	\Big\| (\partial_xu_1)\prod_{i=2}^{d}u_i\Big\|_{Y^{r}} &\lesssim \abss{u_1}_{X^{r}}\prod_{i=2}^d\abss{u_i}_{X^{s}}, 
	\end{align}
\end{theorem}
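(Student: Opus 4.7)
The plan is to adapt the Littlewood--Paley multilinear analysis of \Cref{thmnon2}, which becomes cleaner here thanks to the strict super-criticality $s,r > \tfrac{1}{2}$. First I would dyadically decompose
\[(\partial_x u_1)\prod_{i=2}^d u_i = \sum_{N_1,\ldots,N_d}(\partial_x P_{N_1}u_1)\prod_{i=2}^{d}P_{N_i}u_i,\]
reorder so that $N_2\geq N_3\geq\cdots\geq N_d$, and estimate the $L^1_xL^2_t$ norm on which $Y_N$ is built via H\"older: put the $\partial_x$-factor in the local smoothing norm $L^\infty_xL^2_t$, the next four factors in $L^4_xL^\infty_t$, and the remaining factors in $L^\infty_{x,t}$. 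If some $N_j$ with $j\geq 2$ exceeds $N_1$, I would swap roles by placing $P_{N_j}u_j$ in $L^\infty_xL^2_t$ and $\partial_x P_{N_1}u_1$ in $L^4_xL^\infty_t$; this only improves the bound because $L^\infty_xL^2_t$ saves an extra $N^{-1/2}$ on the largest frequency.

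Passing to $X_N$ norms through the embeddings used to define \eqref{norm4}, the elementary H\"older estimate becomes
\[\|P_N(\,\cdots)\|_{Y_N} \lesssim N^{-1/2}\,N_1^{1/2}\prod_{i=2}^{5}N_i^{1/4}\prod_{i\geq 6}N_i^{1/2}\,\|P_{N_1}u_1\|_{X_{N_1}}\prod_{i\geq 2}\|P_{N_i}u_i\|_{X_{N_i}}.\]
Writing $b_{N_1}=N_1^{r}\|P_{N_1}u_1\|_{X_{N_1}}$ and $b_{N_i}=N_i^{s}\|P_{N_i}u_i\|_{X_{N_i}}$ for $i\geq 2$, the bound rearranges into
\[N^{-1/2}\,N_1^{1/2-r}\prod_{i=2}^{5}N_i^{1/4-s}\prod_{i\geq 6}N_i^{1/2-s}\prod_i b_{N_i},\]
in which every exponent on the $N_i$ for $i\geq 2$ is strictly negative thanks to $s>\tfrac{1}{2}$.

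Step three is the two-regime summation. In the regime $N\sim N_1\gtrsim N_2$ the factors of $N$ and $N_1$ cancel after multiplying by $N^r$, and repeated Cauchy--Schwarz / Young's inequality arguments as in \eqref{cs}--\eqref{yi} handle the summation over $N_2,\ldots,N_d$ because the strictly negative exponents give geometric convergence. In the high-high-to-low regime $N\lesssim N_1\sim N_2$, the factor $(N/N_1)^{r-1/2}$ with $r>\tfrac{1}{2}$ is square-summable over $N\leq N_1$, so the two-frequency trick \eqref{cs} closes the estimate. Combining both regimes yields $\|(\partial_x u_1)\prod u_i\|_{\dot{Y}^r}\lesssim\|u_1\|_{\dot{X}^r}\prod_{i\geq 2}\|u_i\|_{\dot{X}^s}$, and the corresponding $\dot{Y}^0$ estimate needed to control $\|\cdot\|_{Y^r}$ follows by repeating the argument with $r$ replaced by $0$.

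The main nuisance, which I expect to be the principal obstacle, is low-frequency bookkeeping: since the summations run over $N_i\in 2^{\mathbb{Z}}$ rather than $2^{\mathbb{N}}$, a factor $P_{N_i}u_i$ with $N_i\ll 1$ must be controlled by the $\dot{X}^0$ piece of $X^s$ rather than by $\dot{X}^s$. I would split each factor into its $N_i\leq 1$ and $N_i>1$ halves, estimating them with $\|\cdot\|_{\dot{X}^0}$ and $\|\cdot\|_{\dot{X}^s}$ respectively; the strict positivity of $s-\tfrac{1}{2}$ guarantees geometric summation in both halves, and the final bound collapses to the desired $\|u_1\|_{X^r}\prod_{i=2}^d\|u_i\|_{X^s}$.
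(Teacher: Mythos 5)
Your overall strategy matches the paper's: dyadic decomposition, H\"older with the local smoothing norm $L^\infty_x L^2_t$ on the derivative factor, $L^4_x L^\infty_t$ maximal function on four factors, $L^\infty_{x,t}$ via Bernstein on the rest, and a Cauchy--Schwarz/Young summation split into $N\sim N_1\gg N_2$ and $N\lesssim N_1\sim N_2$. Your exponent bookkeeping and use of the embeddings $X^s\hookrightarrow\dot{X}^{s'}$ for $0\le s'\le s$ (handled in your last paragraph by splitting $N_i\le 1$ and $N_i>1$) are also in line with the paper, which passes $\abss{u_i}_{\dot{X}^{1/4}}, \abss{u_i}_{\dot{X}^{1/2}}, \abss{u_i}_{\dot{X}^0}\lesssim\abss{u_i}_{X^s}$.

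There is, however, a concrete gap in the final step, where you claim that ``the corresponding $\dot{Y}^0$ estimate needed to control $\abss{\cdot}_{Y^r}$ follows by repeating the argument with $r$ replaced by $0$.'' This is inconsistent with your own analysis two sentences earlier: in the high-high-to-low regime you observe that $(N/N_1)^{r-1/2}$ ``with $r>\tfrac12$ is square-summable over $N\le N_1$.'' Setting $r=0$ gives $(N/N_1)^{-1/2}$, which is \emph{not} square-summable over $N\le N_1$. Concretely, since the derivative lands on the high-frequency factor, the H\"older step gives
\begin{equation*}
\abss{P_N(\cdots)}_{Y_N}\lesssim N^{-\frac12}N_1^{\frac12}\abss{P_{N_1}u_1}_{X_{N_1}}\,(\text{bounded factors}),
\end{equation*}
and the $\dot{Y}^0$ square-sum $\sum_{N}\sum_{N_1\gtrsim N}(N_1/N)\abss{P_{N_1}u_1}^2_{X_{N_1}}$ diverges upon switching the order (the inner sum $\sum_{N\lesssim N_1}N_1/N$ is infinite). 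So the elementary H\"older/Bernstein argument alone cannot close the $\dot{Y}^0$ component of $Y^r=\dot{Y}^0+\dot{Y}^r$ in the high-high regime; one would need an extra gain in the output frequency, e.g.\ a version of the bilinear estimate \eqref{bi2}/\eqref{bi25} for high-high-to-low interactions, with attention to the conjugate pattern. To be fair, the paper's own write-up of this theorem only displays the $\dot{Y}^r$ computation and concludes with ``take the $l^2$ summation to obtain \eqref{multi},'' leaving the $\dot{Y}^0$ piece implicit; your proposal makes the gap visible by asserting the $r=0$ substitution explicitly.
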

\begin{proof}
	Again, we study the frequency interactions with $N$ being the output frequency and $N_1\geq N_2\geq \ldots \geq N_d$ being the input frequencies. For $s>\frac{1}{2}$, we define $c_{N_1,1}=\abss{P_{N_1} u_1}_{X_{N_1}}$ and $c_{N_i,i}=\abss{P_{N_i}u_i}_{X_{N_i}}$ for $2\leq i \leq d$. We consider the usual $High\times Low \to High$ and $High\times High \to Low$ interactions:\\
	\begin{enumerate}[leftmargin=*]
		\item $N\sim N_1 \gg N_2\geq \ldots \geq N_d$. \\
		With some abuse of notations, we define $\prod_{i=5}^{d-1}A_i=1$ if $d=5$. By H{\"o}lder inequality, Young's inequality and the continuous embedding of function spaces $X^s\hookrightarrow X^{s'}\hookrightarrow \dot{X}^{s'}$ for any $s'>s>\frac{1}{2}$,
		\begin{align*}
            & N^{r-\frac{1}{2}}\Big\| P_N[(P_{N_1}\partial_xu_1)\prod_{i=2}^{d}P_{N_i}u_i]\Big\|_{L^1_xL^2_t} \\
		&\lesssim N^{r-\frac{1}{2}}\sum_{N_i}\abss{P_{N_1}\partial_xu_1}_{L^{\infty}_xL^2_t}\prod_{i=2}^4\abss{P_{N_i}u_i}_{L^{4}_xL^{\infty}_t}\prod_{i=5}^{d-1}\abss{P_{N_i}u_i}_{L^{\infty}_{x,t}}\abss{P_{N_d}u_d}_{L^{4}_xL^{\infty}_t}  \\
		&\lesssim \sum_{N_i}\Big(\frac{N}{N_1}\Big)^{r-\frac{1}{2}}\Big(\frac{N_d}{N_2}\Big)^{\frac{1}{4}}c_{N_1,1}(N_2^{\frac{1}{2}}c_{N_2,2})c_{N_d,d}\prod_{i=3}^{4}N_i^{\frac{1}{4}}c_{N_i,i} \prod_{i=5}^{d-1}N_i^{\frac{1}{2}}c_{N_i,i} \\
		&\lesssim \sum_{N_1 \sim N}\Big(\frac{N}{N_1}\Big)^{r-\frac{1}{2}}c_{N_1,1}\abss{u_2}_{\dot{X}^{\frac{1}{2}}}\prod_{i=3}^{4}\abss{u_i}_{\dot{X}^{\frac{1}{4}}}\prod_{i=5}^{d-1}\abss{u_i}_{\dot{X}^{\frac{1}{2}}}\abss{u_d}_{\dot{X}^{0}} \\
		&\lesssim \sum_{N_1 \sim N}\Big(\frac{N}{N_1}\Big)^{r-\frac{1}{2}}c_{N_1,1}\prod_{i=2}^{d}\abss{u_i}_{X^s}.
		\end{align*}
		Take the $l^2$ summation and \eqref{multi} follows.
		\item $N\lesssim N_1 \sim N_2\geq \ldots \geq N_d$. \\
            This is similar to the previous case, but we apply Cauchy-Schwarz to $\sum_i c_{N_1,1}c_{N_2,2}$ after applying H{\"o}lder inequality.
		\begin{align*}
            &	N^{r-\frac{1}{2}}\Big\| P_N[(P_{N_1}\partial_xu_1)\prod_{i=2}^{d}P_{N_i}u_i]\Big\|_{L^1_xL^2_t} \\
		&\lesssim \sum_{N_i}\Big(\frac{N}{N_1}\Big)^{r-\frac{1}{2}}\Big(\frac{N_d}{N_3}\Big)^{\frac{1}{4}}c_{N_1,1}(N_2^{\frac{1}{4}}c_{N_2,2})(N_3^{\frac{1}{2}}c_{N_3,3})(N_4^{\frac{1}{4}}c_{N_4,4})c_{N_d,d}\prod_{i=5}^{d-1}(N_i^{\frac{1}{2}}c_{N_i,i}) \\
		&\lesssim \Big(\sum_{N_1 \gtrsim N}\Big(\frac{N}{N_1}\Big)^{2r-1}c_{N_1,1}^2\Big)^{\frac{1}{2}}\abss{u_2}_{\dot{X}^{\frac{1}{4}}}\abss{u_3}_{\dot{X}^{\frac{1}{2}}}\abss{u_4}_{\dot{X}^{\frac{1}{4}}}\prod_{i=5}^{d-1}\abss{u_i}_{\dot{X}^{\frac{1}{2}}}\abss{u_d}_{\dot{X}^{0}} \\
		&\lesssim \Big(\sum_{N_1 \gtrsim N}\Big(\frac{N}{N_1}\Big)^{2r-1}\abss{P_{N_1} u_1}^2_{X_{N_1}}\Big)^{\frac{1}{2}}\prod_{i=2}^{d}\abss{u_i}_{X^s}.
		\end{align*}
		Take the $l^2$ summation to obtain \eqref{multi}.
	\end{enumerate}
\end{proof}
\noindent
The proof of \Cref{gwp1} part $(A)$ now follows the same contraction argument as before. To prove part $(B)$ of the theorem,  we replace $u_j$ by $\partial_x u_j$ for some $j\geq 2$, and it follows from \eqref{p1} that $\abss{\partial_x u_i}_{X^s}\lesssim \abss{u_i}_{X^{s+1}}$ for any $s>\frac{1}{2}$. Hence, \eqref{multi} implies that for any $s>\frac{3}{2}$,
\begin{align*}
\Big\| (\partial_x u_1)(\partial_x u_j)\prod_{\substack{i=2 \\ i\not= j}}^{d}u_i\Big\|_{Y^{s}} 
&\lesssim \abss{u_1}_{X^{s}}\abss{\partial_x u_j}_{X^{s-1}}\prod_{\substack{i=2 \\ i\not= j}}^d\abss{u_i}_{X^{s-1}} \\
&\lesssim \prod_{i=1}^d\abss{u_i}_{X^{s}}. 
\end{align*}
Consequently, in the case that a term in $P(u,\bar{u},\partial_x u,\partial_x \bar{u})$ has more than one derivative, we can employ the contraction argument in $X^{s}$. \\
\\
\noindent
\textbf{Acknowledgements. }Part of this work was supported by a National Science Foundation, grant DMS-1600444.\\


\nocite{MR2995102}
\bibliographystyle{elsarticle-harv} 
\bibliography{Pornnopparath110117}




%
\end{document}